\newcommand{\eps}{\varepsilon}
\newcommand{\Rd}{\mathbb R^d}
\newcommand{\Fe}{F_\eps}
\newcommand{\R}{\mathbb R}
\newcommand{\Z}{\mathbb Z}
\newcommand{\N}{\mathbb N}
\newcommand{\E}{\mathbb E}
\newcommand{\var}{\textrm{var}}
\renewcommand{\S}{\mathbf{S}}
\renewcommand{\qed}{$\hfill\square$}
\newcommand{\vol}{\textrm{vol}}
\newcommand{\K}{\mathcal K^d}
\newcommand{\ring}{\mathcal R^d}
\newcommand{\tow}{\stackrel{P}{\longrightarrow}}
\newcommand{\tod}{\stackrel{d}{\longrightarrow}}
\newcommand{\sC}{{\mathcal{C}}}
\newcommand{\asC}{\overline{\mathcal{C}}}
\newcommand{\sM}{{\mathcal{M}}}
\newcommand{\Unp}{\textrm{Unp}}
\newcommand{\reach}{\textrm{reach}}
\newcommand{\Ind}{{\bf 1} }
\newcommand{\tr}{\mathrm{tr}}
\newcommand{\esslim}[1]{\underset{#1}{\mathrm{esslim}\,}}
\newtheorem{theorem}{Theorem}[section]
\newtheorem{lemma}[theorem]{Lemma}
\newtheorem{lemma*}{Lemma}
\newtheorem{corollary}[theorem]{Corollary}
\newtheorem{definition*}[lemma*]{Definition}
\newtheorem{example}[theorem]{Example}
\newtheorem{remark}[theorem]{Remark}
  \let\oldmarginpar\marginpar
  \renewcommand\marginpar[1]
\begin{document}
\title{Estimation of fractal dimension and fractal curvatures from digital
images}
\author{Evgeny Spodarev}
\ead{evgeny.spodarev@uni-ulm.de}
\address{Ulm University, Institute of Stochastics, 89069 Ulm, Germany}
\author{Peter Straka}
\ead{p.straka@unsw.edu.au}
\address{University of New South Wales, School of Mathematics and
Statistics, Sydney NSW 2052, Australia}
\author{Steffen Winter\corref{mycorrespondingauthor}}
\cortext[mycorrespondingauthor]{Corresponding author} 
\ead{steffen.winter@kit.edu}
\address{Karlsruhe Institute of Technology, Department of Mathematics, Kaiserstr.\ 89, 76128 Karlsruhe, Germany}


\begin{keyword}
fractal curvatures \sep fractal
dimension, sausage method, box counting method, lacunarity,
dimension estimation, (non)linear regression, Minkowski dimension, Minkowski
content
\MSC[2010] Primary 28A80; Secondary  28A75, 62J05, 62J02, 62M15, 68U10, 94A08
\end{keyword}

\begin{abstract}
Most of the known methods for estimating the fractal dimension of fractal sets are based on the evaluation of a single geometric characteristic, e.g.~the volume of its parallel sets.  We propose a method involving the evaluation of several geometric characteristics, namely all the intrinsic volumes (i.e.\ volume, surface area, Euler characteristic etc.) of the parallel sets of a fractal.
Motivated by recent results on their limiting behaviour, 
we use these functionals to estimate the fractal dimension of sets from digital images.
Simultaneously, we also obtain estimates of the \emph{fractal curvatures} of these sets, some fractal counterpart of intrinsic volumes, allowing a finer classification of fractal sets than by means of fractal dimension only.
We show the consistency of our estimators and test them on some digital images of self-similar sets.
\end{abstract}

\maketitle

\section{Introduction}

For the classification of fractal sets, it is
common to examine their fractal dimension.
Two major algorithms for estimating fractal dimensions
are well known and extensively used: The box counting algorithm and the
\emph{sausage method}, whose name is due to the use of
$\eps$-parallel sets
\begin{align}\label{eq:parallelset}
F_\eps := \{x\in \Rd : d(x,F)\leq \eps\}, \quad \eps\geq 0,
\end{align}
for the approximation of a given fractal set $F\subset \Rd$ (see
e.g.\ \cite{stoyan1994frs}). The latter approach is related
to the \emph{Minkowski-dimension} $\dim_M F$, which is the number $s\geq 0$ such that
\begin{align}\label{eq:dimM-informal}
\vol_d(F_\eps)\sim c\cdot\eps^{d-s}, \text{ as } \eps\searrow 0
\end{align}
for some constant $c$. (Here and throughout $\sim$ means that the
quotient of left and right hand side converges to $1$,
$\vol_d(\cdot)$ is the $d$-dimensional Lebesgue measure and
$d(x,F):=\inf_{y\in F} |x-y|$, where $|\cdot|$ is the Euclidean norm
in $\Rd$). The Minkowski dimension is known to be equivalent to
the box counting dimension for any bounded set $F\subset\R^d$.
Thus both approaches estimate the same mathematical
object.
Beside these two most popular algorithms, many other computation methods such as
e.g.\ the local dimension method, and various refinements of the two basic methods
are available, see e.g.
\cite{cutler1989eds,martinezlopez2001ime,plotnick1996lag,
sandau1997mfd,taylor91,MR2963995} as well as the books \cite{stoyan1994frs, MR1390993} and the references
therein.

It has been observed in many applications that often the fractal dimension alone is not
sufficient to distinguish or classify different fractal structures and additional \emph{texture} parameters have been suggested. One of the simplest and most prominent is the \emph{lacunarity}, suggested by Mandelbrot in \cite{Mandelbrot82,mandelbrot94}. For a set $F\subset\R^d$ with Minkowski dimension $\dim_M F=s$ it is defined as the reciprocal of its \emph{$s$-dimensional Minkowski content} $\sM^s(F):=\lim_{\eps\searrow 0} \eps^{s-d}\vol_d(F_\eps)$. That is, the lacunarity is essentially one over the constant $c$ in the relation \eqref{eq:dimM-informal}. In the sausage method, the $y$-intercept of the regression line is a reasonable estimator for $\log c$ and therefore, one gets the lacunarity almost for free together with the estimate for the dimension $s$. Lacunarity can be understood as a measure of how fast the space around the fractal is occupied when the parallel set grows. It is able to distinguish fractal structures of equal dimension. Algorithms for determining lacunarity and related  texture parameters have been proposed and analyzed, e.g.~in \cite{MR1130096,MR2435388,Backes1,Backes2} and used successfully in very different fields such as pattern recognition \cite{Backes1}, signal processing \cite{MR3014287}, DNA classification \cite{MR3105523}, the analysis of aggregation clusters in statistical mechanics \cite{DLAcluster} and of breast tumors in medicine \cite{MR3106985}, to mention just a few recent studies.
Despite these many positive examples, one can not hope that a single texture parameter like lacunarity will always be able to successfully distinguish or classify fractal structures of a given class. It is easy to construct sets with very different texture and visual appearance but the same dimension and lacunarity, from which the need for more texture parameters is evident.

In the present work, we suggest a whole vector of texture parameters for fractal sets based on the recently introduced concept of fractal curvatures \cite{diss-winter} and we propose some methods how to estimate these parameters separately or simultaneously from given binary images of a fractal set. Our approach may be viewed as a generalization of the sausage method:
Given a bounded subset $F\subset\R^d$, we look at the parallel sets $F_\eps$ of $F$ for small radii $\eps>0$. While the sausage method considers the behaviour of the volume of these parallel sets only, we propose to study the asymptotic behaviour of all (or at least several) \emph{total curvatures} $C_0(F_\eps),\ldots, C_d(F_\eps)$ as $\eps\searrow 0$. 
Total curvatures (or \emph{intrinsic volumes}) are important geometric characteristics and are defined for different classes of bounded sets $A\subset\R^d$, e.g.\ convex and polyconvex sets, sets of positive reach and their unions, etc. They have the following interpretations: $C_d(A)$ is the $d$-dimensional volume, $C_{d-1}(A)$
is essentially the surface area and $C_0(A)$ is the Euler characteristic of $A$. The remaining functionals have interpretations as integrals of mean curvature. For
a set $A\in\R^2$ this means for instance that $C_2(A)$, $C_1(A)$ and $C_0(A)$
are (up to normalization) area, boundary length
and Euler characteristic of $A$, respectively. The intrinsic volumes can be determined simultaneously from binary images using e.g.~the algorithms described in \cite{guderleietal}.

The proposed methods are based on the following ideas and definitions from \cite{diss-winter}:
For a fractal set $F\subset\R^d$  with (Minkowski) dimension $\dim_M F=s$, the $k$-th total curvature $C_k(F_\eps)$ behaves typically
like $\eps^{k-s}$  as $\eps\searrow 0$. Often, for instance for non-arithmetic self-similar sets, one has direct proportionality, that is, the limit
\begin{align}
\label{eq:frac-curv}
{\sC}_k(F)&:=\lim_{\eps\searrow 0}{\eps^{s-k}C_k(F_\eps)}
\end{align}
exists and is then called the \emph{$k$-th fractal curvature} of $F$.
In case the limit in \eqref{eq:frac-curv} does not exist,
 {e.g.\ for the Sierpinski gasket,} $C_k(F_\eps)$ may still be of the order $\eps^{s-k}$. Then one often observes oscillations in the geometry and hence
 in the total curvatures $C_k(F_\eps)$ which do not vanish as $\eps\searrow 0$.
 Instead $C_k(F_\eps)$ is asymptotic to some periodic function. This is the typical behaviour for instance for arithmetic self-similar sets.
 In this case one has $C_k(F_\eps)= \Theta( \eps^{k-s})$ as $\eps\searrow 0$, that is, the quotient $|C_k(F_\eps)|/\eps^{k-s}$ is bounded from above and below by some constants.
 Moreover, the following average limit typically exists:
\begin{align} \label{eq:avg-frac-curv}
\asC_k(F)&:=\lim_{\delta\searrow 0}\frac{1}{|\log\delta|}
  \int_\delta^{1}\eps^{s-k}C_k(F_\eps)\frac{d\eps}{\eps},
\end{align}
which is then called the \textit{$k$-th average
fractal curvature} of the set $F$.
For $k=d$, the definitions \eqref{eq:frac-curv} and \eqref{eq:avg-frac-curv} are just the well known Minkowski content and its averaged counterpart, which are thus naturally included in the framework of (average) fractal curvatures.  We refer to Section~\ref{sec:fractal} and the references therein for more details and results on fractal curvatures.

Based on these ideas, we propose two methods for estimating
simultaneously the fractal dimension and the (average) fractal
curvatures of a given set $F\subset\R^d$ from its digital
approximations. The first method is based on a multivariate linear
regression and tries to estimate simultaneously all (or at least several) fractal curvatures in \eqref{eq:frac-curv} together with the dimension. This attempt does only make sense under the assumption that for $F$ all the fractal curvatures exist which are to be included in the regression.
Since in many situations this assumption is not satisfied, even for
self-similar sets, we propose a second method, which tries to estimate averaged
fractal curvatures instead.
The second method is a more sophisticated quasi--linear regression
inspired by a time series approach with a linear drift and a truncated
Fourier series as a seasonal part to model the oscillations in the
geometry. It allows to estimate the  average fractal curvatures even when
the limits in \eqref{eq:frac-curv} do not exist. For this approach to be
meaningful, the assumption on the existence of fractal curvatures is replaced
by the much weaker assumption that the expression $\eps^{s-k} C_k(F_\eps)$ is asymptotic to some
(multiplicatively) periodic function $p_k(\eps)$ as $\eps\searrow 0$. This is what is typically observed in situations where some kind of self-similarity is present.

Roughly speaking, the estimation procedure in the first method is
as follows: Given a binary image of a fractal set $F\subset\R^d$,
we first measure the values of $C_k(F_{\eps_j})$ for a set of
dilation radii $\{\eps_1,\ldots,\eps_n\}$ and all $k \in
\{0,\ldots,d\}$.  In this step, we employ the algorithm described
in \cite{minkdiscr} and \cite{guderleietal} which allows for a
simultaneous computation of \emph{all} intrinsic volumes in only
one scan of each set $F_{\eps_j}$. Second, we use the $(d+1)$
asymptotic relations
\begin{align}\label{eq:1}
C_k(F_\eps) &\sim \sC_k(F) \eps^{k-s}, \quad \text{ as } \eps \searrow 0,
\end{align}
implied by \eqref{eq:frac-curv} for a linear regression. Multiplying by $\eps^{-k}$ and taking logarithms of the absolute values on both sides in \eqref{eq:1}, we get the relation $\log \left(\eps^{-k} |C_k(F_{\eps})|\right)\sim \beta_k - s \log \eps$ as $\eps\searrow 0$, where $\beta_k:=\log |\sC_k(F)|$, which suggests to compare
the expression $\log \left(\eps^{-k} |C_k(F_{\eps})|\right)$ to the line $\beta_k+sx$ in the variable $x:=-\log\eps$. Similarly, by combining all the data, the set of vectors
$$
\left\{\left(\log (\eps_j^{-0} |C_0(F_{\eps_j}))|,\log (\eps_j^{-1} |C_1(F_{\eps_j}))|,\ldots,\log
(\eps_j^{-d} |C_d(F_{\eps_j}))|\right)\right\}_{j=1,\ldots, n}
$$
plotted against
$x_j=-\log \eps_j$  {provides} a point cloud in $\R^{d+2}$
that resembles a line and a least squares fit will result in an estimate of
the dimension $s=\dim_M F$ of the fractal set $F$, as well as of
its fractal curvatures $\sC_k(F)$, $k=0,\ldots,d$. Deviations from the line are due to image
discretization, measurement errors and the described geometric oscillations
of the intrinsic volumes that may only vanish as $\eps
\searrow 0$.  These errors are supposed to be random. They
can not be observed directly. Notice that this is the only source
of randomness in this method, since the set $F$ is deterministic.
Due to these assumptions, statistical regression methodology can be
used.

In the second method, the linear regression step is replaced by a quasi--linear regression, which can be interpreted as fitting a periodic function to the above point cloud.
For details of both methods we refer to Section~\ref{sec:estimator}.

Under suitable assumptions on the covariance structure of the error in our models and for suitable choices of the radii $\eps_j$ we prove the
weak consistency of our estimators as the number of observations
$n$ tends to $\infty$. Furthermore, we have implemented the
algorithms and tested them on a number of self-similar sets.

 The
paper is organized as follows: In Section~\ref{sec:fractal}, some
notions from fractal geometry are recalled and the relevant results on
curvature measures and fractal curvatures are reviewed. In Section~\ref{sec:estimator},
we introduce the two methods for estimating the fractal dimension $s(F)$
and the fractal curvatures $\sC_k(F)$, $k=0,\ldots,d$ of a fractal
set $F$ and discuss their asymptotic properties, based on a suitable model for the discretization errors.
Section~\ref{sec:numerical} is concerned with the implementation of the methods and some simulation results:
For some self-similar sets in $\R^2$ the fractal dimension and the fractal curvatures are estimated
using the proposed methods and the results are compared to the (known) exact
dimension and fractal curvatures as well as to estimates of the dimension provided by conventional methods.


\section{Fractal dimension and fractal curvatures}
\label{sec:fractal}

In this section we provide some theoretical background required
for the justification of our approach. First we recall a few facts
on fractal dimensions and self-similar sets. Then we discuss
curvature measures and review some recent results on fractal
curvatures.
\ \\
\paragraph{\bf Box counting dimension and Minkowski dimension.}
For a bounded set $F\subset\R^d$ and $\eps>0$, recall the definition of the $\eps$-parallel set $F_\eps$  from \eqref{eq:parallelset}.
The  number
\begin{align*}
\dim_M F:=d-\lim_{\eps\searrow 0}\frac{\log \vol_d(F_\eps)}{\log \eps}
\end{align*}
is called the \emph{Minkowski dimension} of $F$, provided the limit
exists (cf.~\cite{falconer90}). It is well known that the
Minkowski dimension of any set $F$ coincides with its \emph{box
dimension} $\dim_B F$ (provided one of these numbers exists), which is defined by
\begin{align*}
  \dim_BF:= \lim_{\delta\searrow 0}\frac{\log N_\delta(F)}{-\log \delta}.
\end{align*}
Here $N_\delta(F)$ is the number of boxes in a $\delta$-grid of
$\R^d$ that intersect $F$. Moreover, $\dim_M F$ is always an upper
bound for the Hausdorff dimension $\dim_H F$ of $F$. See
e.g.~\cite{falconer90} for more details on fractal dimensions and
their properties and interrelations.
\ \\
\paragraph{\bf Self-similar sets.}
Let $S_i:\R^d\rightarrow \R^d$, $i=1,\ldots,N$, be contracting
similarities. Denote the contraction ratio of $S_i$
 by $r_i\in(0,1)$. 
It is a well known fact (cf.\ \cite{hutchinson81}),
 that for a system $\{S_1,\ldots,S_N\}$ of similarities there exists a unique,
 non-empty, compact subset $F$ of $\R^d$ satisfying the invariance relation $\S(F)=F$, where $\S$ is
the set mapping defined by
\begin{align*}\S(A)= \bigcup_{i=1}^{N}S_i(A), \quad A\subseteq\R^d.\end{align*}
$F$ is called the \emph{self-similar set} generated by the system
$\{S_1,\ldots,S_N\}$. Moreover, the unique solution $s$ of
$\sum_{i=1}^N r_i^s=1$ is called the \emph{similarity
dimension} of $F$.

The system $\{S_1,\ldots,S_N\}$ is said to satisfy the \emph{open set
condition} (OSC) if there exists an open, non-empty, bounded
subset $O\subset\R^d$
 such that
 \begin{align*}
 S_i O \subseteq O \text{ for } i=1,\ldots,N \text{ and } S_i O \cap
S_j O=\emptyset \text{ for all } i\neq j.
\end{align*}
The OSC ensures that the images $S_1F,\ldots, S_N F$ of $F$ do not overlap too much. It is well known, that for self-similar sets $F$ satisfying OSC, all the different dimensions coincide, i.e.~one has $\dim_M F = \dim_H F = s$, where $s$ is the similarity dimension of $F$.  For sets not satisfying OSC much less is known; still $s$ is an upper bound for  Hausdorff and Minkowski dimension, but these two may be strictly smaller than $s$ and, furthermore, they might differ. In the sequel we shall assume that the self-similar sets satisfy OSC.

Let $h>0$. A finite set of positive numbers $\{y_1,...,y_N\}$ is
called \emph{$h$-arithmetic} if $h$ is the largest number such
that $y_i\in h\Z$ for $i=1,\ldots, N$. If no such number $h$
exists for $\{y_1,...,y_N\}$, the set is called
\emph{non-arithmetic}. We attribute these properties to the system
$\{S_1,\ldots,S_N\}$ or to $F$ if the set $\{-\log
r_1,\ldots,-\log r_N\}$ has them. In this sense, each self-similar
set $F$ is either $h$--arithmetic for some $h>0$ or
non-arithmetic. Sierpi\'nski carpet and  Sierpi\'nski gasket are
$\log 2$-- and $\log 3$--arithmetic. For further examples of
arithmetic and non-arithmetic sets see
Section~\ref{sec:numerical}.
\ \\
\paragraph{\bf Curvature measures and intrinsic volumes.}
We first recall the notion of curvature measures for polyconvex
sets, as this class of sets includes the parallel sets of
digitized sets and hence is a sufficiently general setting for the
implementation of our algorithms. For a general definition of
fractal curvatures we briefly discuss curvature measures on more
general classes of sets in the next paragraph.

Let $\K$ denote the class of compact, convex subsets of $\Rd$ and $\ring$ the class of sets that can be represented as finite unions of sets in $\K$.
$\ring$ is called the \emph{convex ring}, its elements are \emph{polyconvex sets}.
For sets $K\in\K$, the volume $\vol_d$ of the $\eps$-parallel sets of $K$ is given by the so called \emph{Steiner formula}. 
For $\eps\ge 0$, $\vol_d(K_\eps)$ is a polynomial in $\eps$:
\begin{equation}
 \label{eq:steiner}
 \vol_d(K_\eps) = \sum_{k=0}^d \eps^{d-k}\kappa_{d-k}C_k(K).
\end{equation}
The coefficients $C_0(K),\ldots, C_d(K)$ are called the \emph{intrinsic volumes} or \emph{total curvatures} of $K$. $\kappa_j$ denotes the $j$-dimensional volume of the unit ball in $\mathbb R^j$.

The total curvatures are the total masses of certain measures called the \emph{curvature measures} of $K$. They satisfy a \emph{local Steiner formula} which is due to Federer~\cite{federer1959cm}. Let $p_K$ denote the metric projection onto the set $K$. For any Borel set $A\subset\Rd$, the volume of the set $K_\eps \cap p_K^{-1}(A)$ is again a polynomial in $\eps$:
\begin{align}
\label{eq:local-steiner}
  \vol_d(K_\eps\cap p^{-1}_K(A)) = \sum_{k=0}^d \eps^{d-k}\kappa_{d-k} C_k(K,A).
\end{align}
The coefficients $C_0(K,\cdot),\ldots,C_d(K,\cdot)$ are measures in the second argument.
They are called the \emph{curvature measures} of $K$. Their total masses are the total curvatures, $C_k(K,\Rd) = C_k(K)$.
Curvature measures are \emph{additive}, i.e.~if $M, K$ and $M\cup K\in \K$, then $M\cap K\in \K$ and the curvature measures satisfy
\begin{align*}
C_k(M\cup K,\cdot) &=C_k(M,\cdot) + C_k(K,\cdot) - C_k(M\cap K,\cdot), \qquad k=0,\ldots,d.
\end{align*}
This allows to extend curvature measures additively to $\ring$. Iterating the above formula gives the
inclusion-exclusion formula for sets $K_1,\ldots, K_m\in \K$ such that their union $K:=K_1\cup\ldots\cup K_m$ is in $\K$:
\begin{align}
\label{eq:in-ex}
 C_k(K,\cdot)
=\sum_{I\subset \{1,\ldots,m\}}{(-1)^{|I|-1} C_k(\bigcap_{i \in I}
K_i,\cdot)},  \qquad k=0,\ldots,d.
\end{align}
Here $|I|$ denotes the cardinality of the set $I$. Now, if the
left hand side is not defined, i.e.~if $K$ is in $\ring$ but not
in $\K$, take the right hand side as its definition. Groemer
\cite{groemer:1978} has shown that this extension is well defined,
i.e.~that the left hand side is independent of the chosen
representation of $K$ by convex sets. In general, for $K\in\ring$
the curvature measure $C_k(K,\cdot)$ is a signed measure,
$k=0,\ldots, d-2$. Denote by $C_k^{\var}(K,\cdot)$ its total
variation and put $C_k^{\var}(K) := C_k^{\var}(K,\Rd)$,
$k=0,\ldots, d$.
\ \\
\paragraph{\bf Sets with positive reach.} For $X\subset\R^d$, let $\Unp(X)$ be the set of points $y\in\R^d$ which have a unique nearest point in $X$.
$\Unp(X)$ consists of those points for which the metric projection
onto $X$ is well defined. The supremum over all radii $\eps\geq0$
such that
 $X_\eps\subset \Unp(X)$ is called the \emph{reach} of $X$, $\reach(X)$, and $X$ is said to have \emph{positive reach}, if $\reach(X)>0$.
For any set $K$ with positive reach, the local Steiner formula \eqref{eq:local-steiner} holds for all $\eps$ such that $0<\eps<\reach(K)$,
which allows to define the curvature measures $C_0(K,\cdot),\ldots, C_d(K,\cdot)$ of $K$ just as before, see \cite{federer1959cm}.
These curvature measures have similar properties, in particular they are additive, motion invariant and homogeneous of degree $k$  (meaning $C_k(rK) = r^kC_k(K)$ for $r>0$).
They can be extended additively to finite unions of such sets, although some care is necessary as not all unions are feasible. Instead of
discussing this extension in detail, we address another extension, which is particularly useful in our situation namely for sets that are themselves parallel sets.

For a bounded set $K\subset\R^d$, a radius $\eps>0$ is called
\emph{regular}, if $\eps$ is a regular value of the distance
function of $K$ in the sense of Morse theory, see \cite{Fu}.
According to \cite{Fu}, for $d\leq 3$ and $K\subset\R^d$ a
compact set, almost all $\eps>0$ are regular for $K$. Regularity of a
radius $\eps$ for $K$ implies, that the boundary of $K_\eps$ is a
Lipschitz manifold and the closed complement $\widetilde{K_\eps}$
of $K_\eps$ has positive reach. Therefore, the curvature measures
of $\widetilde{K_\eps}$ are well defined in the sense of Federer
and the curvature measures of $K_\eps$ are then given by means of
the following reflection principle (see \cite{RZ}):
\begin{align*}
  C_k(K_\eps,\cdot)=(-1)^{d-k-1} C_k(\widetilde{K_\eps},\cdot),\qquad k=0,\ldots, d-1.
\end{align*}
As before, we denote by $C_k(K_\eps):=C_k(K_\eps,\R^d)$ the total
masses of the measures $C_k(K_\eps,\cdot)$, which are also called
the \emph{Lipschitz-Killing curvature measures} of $K_\eps$. We
continue to use the term \emph{total curvatures} for the
$C_k(K_\eps)$. Recall that $C_k^\var(K_\eps,\cdot)$ is the total
variation measure of the (in general signed) measure
$C_k(K_\eps,\cdot)$ and $C_k^\var(K_\eps)$ its total mass.
\ \\
\paragraph{\bf Scaling exponents and fractal curvatures.}
For the definition of fractal curvatures for a set, it is
necessary that sufficiently many of its close parallel sets admit
curvatures measures.
For a compact set $F\subset\Rd$, we assume that almost all $\eps>0$ are regular. 
(As mentioned above, in space dimensions $d\leq 3$, this is always satisfied.)
Then, for each $k\in\{0,\ldots,d\}$, the \emph{$k$-th curvature scaling exponent} $s_k=s_k(F)$ of the set $F$ is defined by
\begin{align}\label{eq:sk}
s_k(F) := \inf\left\{t\in\mathbb R:\esslim{\eps\searrow 0}\eps^tC_k^{\var}(F_\eps)=0\right\}, 
\end{align}
cf.\ e.g.\ \cite[p.13]{diss-winter} or \cite[eq.\ (1.5)]{PokW}. The typical value of $s_k(F)$ for a fractal set $F$ of dimension $\dim_M F=s$ is $s_k=s-k$. Although this relation may fail for certain sets $F$,
it is useful to concentrate on the following essential limit (avoiding the irregular $\eps$) and call it the \textit{k-th fractal curvature} of $F$ in case it exists:
\begin{eqnarray}
\label{eq:frac-curv2}
\sC_k(F)&:=&\esslim{\eps\searrow 0}{\eps^{s-k}C_k(F_\eps)}.
\end{eqnarray}
In general, the limit in \eqref{eq:frac-curv2} does not exist. Even for self-similar sets, it often
fails to exist. Therefore, the following Cesaro averaged
version of the limit is considered, which has a better convergence behaviour.
For $k=0, \ldots, d$, the \emph{$k$-th average
fractal curvature} of $F$ is the number
\begin{align}   \label{eq:avg-frac-curv2}
\asC_k(F) := \lim_{\delta\searrow0}\frac{1}{|\log\delta|}
\int_\delta^{1} \eps^{s-k}C_k(F_\eps) \frac{d\eps}{\eps}
 \end{align}
provided this limit exists. Note that if
$\sC_k(F)$ exists, then $\asC_k(F)$ exists as well and
both numbers coincide.
The functionals $\sC_k(F)$ and
$\asC_k(F)$ deserve to be called \emph{curvatures},
since they 
share some of the desirable properties of total
curvatures. In particular, they are motion-invariant and
homogeneous, though in general $\sC_k$ is of degree $k+s_k$, cf.~\cite{diss-winter}. As fractal curvatures are limits of classical total curvatures, they are expected to carry important geometric information about the fractal set $F$ and therefore they are natural candidates to be considered as geometric indices or texture parameters.


\begin{remark}
       The definition of the fractal curvatures here is slightly different to the one given in \cite{diss-winter}, where the exponent $s_k$ is put in general instead of $s-k$. This slightly changed point of view (taken up e.g.\ in \cite{Zaehle} and \cite{WZ}) emphasizes the generic case. It may give a zero for some fractal curvature in the exceptional cases where the exponent $s-k$ is not optimal. However, recent results in \cite{PokW} show that these exceptional cases are rare and can be classified completely at least in $\R$ and $\R^2$.
       In general, one could define for each $t\in\R$ the \emph{$t$-dimensional} $k$-th fractal curvature of a set $F$ by $\lim_{\eps\searrow 0}{\eps^{t}C_k(F_\eps)}$ (just in the same way as for the $t$-dimensional Minkowski content of $F$). Then the two different definitions would be special cases (which often coincide) of this general notion. A similar remark applies to average fractal curvatures.
\end{remark}

\paragraph{\bf The fractal curvatures of self-similar sets.}
In general it is difficult to determine the  fractal dimension or
the Minkowski content of a set exactly and it is even more
difficult for fractal curvatures. However, for self-similar sets
some rigorous results have been established, which exemplify that
the above definitions are reasonable and useful. So let now $F$ be
a self-similar set satisfying OSC.
In \cite{diss-winter}, self-similar sets with polyconvex parallel sets have been considered. Polyconvexity  is a rather restrictive condition,
which ensures however that curvature measures are well defined for \emph{all} parallel sets $F_\eps$ of $F$. Note also that polyconvexity is easy to check, as the following criterion holds, see
~\cite{llorentewinter07}: $F_\eps$ is
polyconvex for \emph{all} $\eps > 0$ if and only if there exists some $\eps_0
>0$ such that $F_{\eps_0}$ is polyconvex.

For such sets it was shown that for $k=0,\ldots,d$, the expression $\eps^{s-k}C_k^{\var}(\Fe)$ is uniformly bounded as $\eps\searrow 0$, implying in particular that $s-k$ is a general upper bound for the $k$-th scaling exponent $s_k$, that is, we have
\begin{align}
  s_k \leq s-k.
\end{align}
Moreover, a characterization was given of when (average) fractal curvatures exist for these sets:
\begin{theorem}
\cite[Theorem 2.3.6 and Remark 4.1.5]{diss-winter}
\label{fraccurv-mainthm}
Let $F\subset\R^d$ be a self--similar set generated by the system $\{S_1,\ldots,S_N\}$ with contraction ratios $r_i$ and similarity dimension $s$. Suppose that $F$ satisfies the OSC and has polyconvex parallel sets.
Then,
for each $k \in \{0,\ldots,d\}$, there exists a bounded function $p_k : (0,1]
\rightarrow \R$ such that
\begin{align*}
\eps^{s-k}C_k(F_\eps) \sim p_k(\eps) \quad \text{ as } \eps \searrow 0.
\end{align*}
Moreover, the following holds:
\begin{enumerate}
\item[(i)]
If $F$ is $h$-arithmetic (for some $h>0$), then $p_k$ can be chosen multiplicatively
periodic with period $h$, i.e.\ $p_k(h\eps) = p_k(\eps)$, 
and
$\overline \sC_k(F)$ exists.
\item[(ii)]
If $F$ is non-arithmetic, then $p_k$ can be chosen constant, and
$\sC_k(F)$ exists.
\end{enumerate}
The value of $\overline \sC_k(F)$ (and in the non-arithmetic case of $\sC_k(F)$) is given by the integral
\begin{equation}\label{eq:curv-formula}
\frac 1{\eta}
  \int_0^1 \eps^{s-k-1}
\left( C_k(F_\eps) - \sum_{i=1}^N \textbf{\textup{1}}_{(0,r_i]}
(\eps)C_k((S_iF)_\eps)\right)
{d}\eps,
\end{equation}
where  $\eta=-\sum_{i=1}^N r_i^s\log r_i$.
\end{theorem}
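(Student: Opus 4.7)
The natural approach is to set up a renewal equation for the curvatures of the parallel sets and apply a suitable renewal theorem. I will write $g_k(\eps):=\eps^{s-k}C_k(F_\eps)$ and aim to show that $g_k$ satisfies an equation that allows either a non-arithmetic renewal theorem (giving a constant limit) or a lattice renewal theorem (giving a multiplicatively periodic limit).

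First, I would exploit the self-similar decomposition. Since $F=\bigcup_{i=1}^N S_i F$, one has $F_\eps=\bigcup_{i=1}^N (S_i F)_\eps$, and the homogeneity and motion-invariance of curvature measures yield $C_k((S_iF)_\eps)=r_i^k C_k(F_{\eps/r_i})$. Applying the inclusion--exclusion formula \eqref{eq:in-ex} to the polyconvex parallel sets (which is legitimate because of the polyconvexity assumption), one obtains
\begin{align*}
 C_k(F_\eps)=\sum_{i=1}^N r_i^k C_k(F_{\eps/r_i}) - R_k(\eps),
\end{align*}
where $R_k(\eps)$ collects the curvature contributions of all non-trivial intersections among the sets $(S_iF)_\eps$. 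Multiplying by $\eps^{s-k}$ and using $\sum_i r_i^s=1$ converts this into
\begin{align*}
 g_k(\eps)=\sum_{i=1}^N r_i^s\, g_k(\eps/r_i) - \eps^{s-k}R_k(\eps).
\end{align*}
Truncating the sum to only those $i$ with $\eps\le r_i$ (so that the scaled arguments stay in $(0,1]$) and moving the boundary terms into the inhomogeneity produces an equation of renewal type, namely
\begin{align*}
 g_k(\eps)=\sum_{i=1}^N r_i^s \mathbf 1_{(0,r_i]}(\eps)\, g_k(\eps/r_i)+\varphi_k(\eps),
\end{align*}
where $\varphi_k(\eps)$ is supported in $(0,1]$ and collects both the intersection corrections and the residual boundary terms.

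The substitution $\eps=e^{-t}$, $G_k(t):=g_k(e^{-t})$, $\Phi_k(t):=\varphi_k(e^{-t})$, and $\tau_i:=-\log r_i>0$ turns this into the classical renewal equation
\begin{align*}
 G_k(t)=\sum_{i=1}^N e^{-s\tau_i} G_k(t-\tau_i)+\Phi_k(t),
\end{align*}
with driving probability measure $\mu=\sum_i e^{-s\tau_i}\delta_{\tau_i}$ (which is a probability by the Moran equation). Now I would invoke the renewal theorem: in the non-arithmetic case, $G_k(t)$ converges to $\eta^{-1}\int_0^\infty \Phi_k(t)\,dt$, where $\eta=\sum_i \tau_i e^{-s\tau_i}=-\sum_i r_i^s\log r_i$, yielding that $\sC_k(F)$ exists and equals the integral \eqref{eq:curv-formula} (after reverting the substitution $\eps=e^{-t}$); in the $h$-arithmetic case, the lattice renewal theorem provides an asymptotically $h$-periodic solution $p_k$ on the scale $t$, which translates back to multiplicative periodicity $p_k(h\eps)=p_k(\eps)$ on the scale $\eps$, and time-averaging yields the average fractal curvature $\asC_k(F)$ with the same integral representation.

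The principal obstacle is verifying the hypotheses of the renewal theorem, in particular the direct Riemann integrability of $\Phi_k$, together with the boundedness of $G_k$. Polyconvexity of $F_\eps$ is what makes $\varphi_k$ controllable: the intersection terms $(S_iF)_\eps\cap(S_jF)_\eps$ are themselves polyconvex and their curvatures can be dominated by total variation measures, for which the previously established uniform bound $\eps^{s-k}C_k^{\var}(F_\eps)=O(1)$ must be leveraged. One then has to check that $\Phi_k$ decays sufficiently fast as $t\to\infty$ (equivalently, $\varphi_k(\eps)$ is concentrated away from $0$ up to the chosen truncation) and is sufficiently regular to be directly Riemann integrable. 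Once these technical estimates are in place, the integral formula \eqref{eq:curv-formula} drops out of the renewal-theoretic limit by unwinding the substitutions.
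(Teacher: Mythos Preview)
The paper does not give its own proof of this theorem; it is quoted as background from \cite[Theorem 2.3.6 and Remark 4.1.5]{diss-winter}, and your renewal-theoretic sketch is essentially the method used there. In particular, your reduction via inclusion--exclusion and the scaling relation $C_k((S_iF)_\eps)=r_i^k C_k(F_{\eps/r_i})$ to a renewal equation in $t=-\log\eps$ with driving measure $\sum_i r_i^s\delta_{-\log r_i}$, followed by the classical non-lattice/lattice dichotomy, is exactly how the result is obtained, and your identification of the inhomogeneity $\varphi_k$ with $\eps^{s-k}\big(C_k(F_\eps)-\sum_i\mathbf 1_{(0,r_i]}(\eps)C_k((S_iF)_\eps)\big)$ is what leads to the formula \eqref{eq:curv-formula}.

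The one place where the real work lies (and where you are honest that a gap remains) is the verification that $\Phi_k$ is directly Riemann integrable. In the original proof this is not obtained merely from the bound $\eps^{s-k}C_k^{\var}(F_\eps)=O(1)$; one needs that the overlap sets $(S_iF)_\eps\cap(S_jF)_\eps$ are uniformly ``small'' in a curvature sense, which is where the OSC enters in an essential way beyond just ensuring $\dim_M F=s$. Concretely, the overlap contribution $R_k(\eps)$ is controlled by localizing the curvature measures to neighbourhoods of the mutual boundaries of the pieces $S_iF$ and using that, under OSC, these boundaries are covered by uniformly few cylinder sets at each scale; this yields a bound of the form $\eps^{s-k}|R_k(\eps)|=O(1)$ on $(0,1]$ together with enough regularity (piecewise continuity with at most countably many discontinuities accumulating only at $0$) to conclude direct Riemann integrability. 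You should be aware that a naive total-variation bound on the full parallel set does not by itself control the overlaps, so this localization step is the genuine technical core you would still need to supply.
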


Note that the last formula allows explicit (but rather tedious)
calculations of $\overline \sC_k(F)$. These exact values will be
compared with the values estimated from binary images of
some fractals $F$ in Section~\ref{sec:numerical}.
It follows from \cite[Lemma 3.2]{llorentewinter07} that for an $h$-arithmetic self-similar set $F$ the value of $\asC_k(F)$
is equivalently given by
\begin{equation}\label{eq:Int_curv-formula}
\asC_k(F)= \frac 1{h_0}
  \int_0^{h_0} p_k\left(e^{-x}\right)\, {d} x,
\end{equation}
where $h_0=-\log h$.

Theorem~\ref{fraccurv-mainthm} extends to a more general class of self-similar sets. The polyconvexity can be replaced by the weaker regularity assumption mentioned above, that almost all $\eps$ are regular for $F$. In this general situation, one needs to assume additionally that a rather technical curvature bound is satisfied. We refer to \cite{Zaehle, RZ12} for further details. In \cite{Zaehle}, analogous results have  been established for random self-similar sets and in \cite{Kombrink,Bohl13} for self-conformal sets.
For the cases $k=d$ and $k=d-1$, none of these assumptions are necessary. As volume and surface area of the parallel sets are well defined for any set $F\subset\R^d$, the assertions of Theorem~\ref{fraccurv-mainthm} hold for any self-similar set satisfying OSC regardless of any polyconvexity or regularity assumption, see \cite{Gatzouras} for the case $k=d$ and \cite{ratwin1} for the case $k=d-1$. In these two cases it has also been shown that $\asC_k(F)$ (as well as $\sC_k(F)$, if it exists) are strictly positive. Moreover, some deep connections between $\sC_{d-1}(F)$ and the Minkowski content $\sC_d(F)$ have been established in \cite{ratwin1}. In particular, for any self-similar set $F\subset\R^d$ (with OSC) one has the equality $s_{d-1}=s_d-1=s-1$, provided $s<d$. Moreover, $\sC_{d-1}(F)$ and $\sC_{d}(F)$ coincide up to some normalisation constant:
\begin{align}\label{eq:d=d-1}
  \sC_{d-1}(F)=\frac{d-s}2 \sC_d(F).
\end{align}
The same relation holds for the average counterparts, see \cite[Theorems 4.5 and 4.7]{ratwin1}. In \cite{ratwin2} it is shown, that the relation \eqref{eq:d=d-1} holds in fact for arbitrary bounded sets $F\subset\R^d$ with $\dim_M F<d$, that is, whenever one of these two fractal curvatures exists (as a positive and finite number) then the other one exists as well and equation  \eqref{eq:d=d-1} holds.

\section{Estimators of dimension and fractal curvatures}
\label{sec:estimator}

\subsection{Least squares methods}
\label{subsec:estimators}

\paragraph{General assumptions.}
Let $F\subset\R^d$ be a fractal set satisfying the
following assumptions:
\begin{itemize}
\item[(A1)] The parallel sets $F_\eps$ of $F$ are sufficiently regular for curvature measures $C_0(F_\eps,\cdot), \ldots, C_d(F_\eps,\cdot)$ to be well defined for almost all $\eps>0$.
\item[(A2)] For each $k=0,\ldots,d$, the expression $C_k(F_\eps)$ (as a function of $\eps$) is either strictly positive or strictly negative.
\item[(A3)] \label{A3} The fractal curvatures $\sC_0(F),\ldots,\sC_d(F)$ exist, i.e., for each $k=0,\ldots,d$, the essential limit in \eqref{eq:frac-curv2} exists.
\end{itemize}

Assumption (A1) is a condition on the regularity of the parallel sets. The notion of fractal curvature does not make sense for $F$ if the curvature measures of its parallel sets are not defined in some way.
However, most sets that one can think of satisfy this assumption. As outlined in Section 2, in $\R^d$, $d\le 3$, almost all $\eps>0$ are regular for $F$ and therefore this condition is satisfied. In higher dimensions the condition may fail but the construction of counterexamples is difficult.  Therefore,
from the point of view of applications, assumption (A1) imposes no restriction. Note in particular that the parallel sets of the digitized
fractal images are always polyconvex such that curvature measures are well defined for all $\eps>0$.

Assumption (A2) is a technical condition needed simply to be able to work on the logarithmic scale. It is a serious restriction.
On the other hand this condition can easily be checked from the data. Single indices $k$ can simply be excluded from the estimation when $C_k(F_\eps)$
 fail to satisfy condition (A2), see Remark~\ref{rem:J} below. We point out that this assumption is always satisfied for the two uppermost indices $d-1$ and $d$: for any set $F\subset\R^d$ and any $\eps>0$, volume $C_d(F_\eps)$ and surface area $C_{d-1}(F_\eps)$ are always strictly positive. As the curvature measures $C_k(F_\eps,\cdot)$, $k\le d-2$ are signed measures in general, the total curvatures may assume negative values. Moreover, as $\eps\searrow 0$ the total curvatures may switch their sign infinitely many times.
Assumption (A2) ensures that we can estimate the absolute values of the fractal curvatures from the absolute values of the data and put the correct sign back to the estimated quantities in the end. 

 Assumption (A3) is the most restrictive but also the most  {natural} assumption. Without the existence of fractal curvatures, it would not make sense to try and estimate them. Note that implicitly we assume in (A3) that the k-th scaling exponent of $F$ is bounded from above by $s-k$, compare Section~\ref{sec:fractal},
where it is also outlined that a rigorous mathematical proof of the existence of fractal curvatures has up to now only been given
for non-arithmetic self-similar sets. Later on we shall replace (A3)  by the weaker assumption (A3'), which ensures that the average fractal curvatures exist, a setting which includes in particular all self-similar sets.

\paragraph{First method: Ordinary least squares.}
Assumption (A3) implies that the asymptotic relation \eqref{eq:1} holds for $k=0,\ldots,d$ which (after taking absolute values, multiplying by $\eps^{-k}$ and taking logs) can be written as
\begin{align} \label{eq:1-log}
\log(\eps^{-k}|C_k(F_\eps)|) &\sim \log|\sC_k(F)| - s \log \eps\,, \quad \text{ as } \eps \searrow 0,
\end{align}
for each $k=0,\ldots,d$. This allows for a linear regression.

Given a set of $\eps$-values $\eps_1,\ldots,\eps_n$, we set
\begin{align}
x_j &:= -\log \eps_j \label{eq:def-xj}\,, \\
  y_{kj}  &:= \log \left( \eps_j^{-k} |C_k(F_{\eps_j})|\right), \quad k=0,\ldots, d.
\label{eq:def-variables}
\end{align}
Relation \eqref{eq:1-log} suggests that if the radii $\eps_j$ are small enough, then the points $(y_{0j},y_{1j},\ldots, y_{dj})\in\R^{d+1}$, $j = 1,\ldots,n$ will lie close to a line 
(see Figure~\ref{fig:3d-plot}). Setting $\beta_k:=\log|\sC_k(F)|$, $k=0,\ldots,d$, we expect
\begin{equation*} 
\begin{pmatrix}
  y_{0 j} \\ y_{1 j} \\ \vdots \\ y_{d j}
\end{pmatrix}
=
\begin{pmatrix}
\beta_0\\ \beta_1\\ \vdots \\ \beta_d
\end{pmatrix}
+
s x_j
\begin{pmatrix}
   1 \\ 1 \\ \vdots \\  1
 \end{pmatrix}
+
\begin{pmatrix}
 \delta_{0j}   \\ \delta_{1j} \\ \vdots \\ \delta_{dj}
 \end{pmatrix}
 ,
\qquad  j=1,\ldots, n,
\end{equation*}
 whereas the
discretisation and computation errors $\{\delta_{kj}: \;
k=0,\ldots, d,\; j=1,\ldots,n \}$ are correlated random variables
with $\E \delta_{kj}=0$ and $\var \, \delta_{kj} \in (0,\infty)$
for all $k=0,\ldots, d$ and $j=1,\ldots,n$. Notice that we may
assume $\delta_{kj}$ to be random since a fractal $F$ and its
parallel sets can be digitized in many different ways. To do that,
it suffices to move $F$ with respect to the digitization lattice
of pixels (voxels) arbitrarily. The assumption $\E \delta_{kj}=0$
reflects the fact that fractal curvatures and dimension are motion
invariant. The errors are correlated because parallel sets are
monotone increasing with respect to inclusion: $F_{\eps_i} \subset
F_{\eps_j}  $ if $\eps_i < \eps_j$.

\begin{figure}[ht]
  \centering
\includegraphics[width=0.7\textwidth]{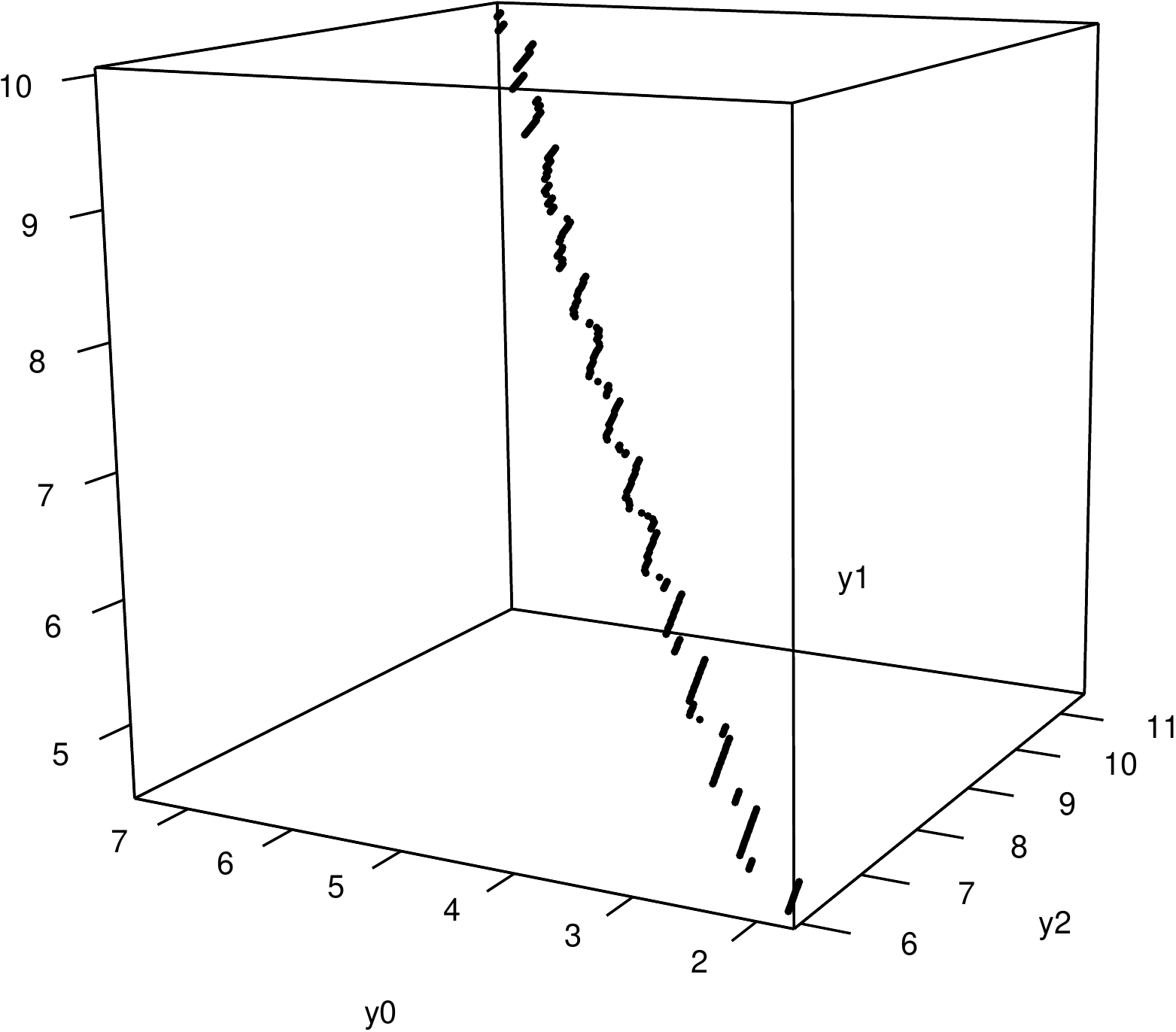}
\caption[Plot of point-cloud] {The point cloud $\{(y_{0j},y_{1j},y_{2j})\}_{j=1}^n$ for a (2-dimensional) $3000\times 3000$ binary image of the
 Triangle Set (see Figure~\ref{fig:thesets}(d)). The $352$ data points have been obtained for dilation radii ranging from
$2.73$ to $89.8$ [pixels], chosen in an equidistant way on a logarithmic scale.}
  \label{fig:3d-plot}
\end{figure}

To estimate the fractal dimension $s$ and the quantities $\beta_k$, $k=0,\ldots, d$ (which encode the fractal curvatures), we fit a line of the form
\begin{align*}
y=\begin{pmatrix}\beta_0 \\ \beta_1 \\ \vdots \\ \beta_d \end{pmatrix}
+s x \begin{pmatrix} 1 \\ 1 \\ \vdots\\ 1 \end{pmatrix}
\end{align*}
to the point cloud
$\{(y_{0j},y_{1j},\ldots, y_{d j})\}_{j=1}^n$ by the ordinary least squares method. That is, we find the values of $s$ and $(\beta_0,\ldots,\beta_d)$  for which the expression
\begin{equation}
\label{eq:S^2}
e_n^2\left(\beta_0,\ldots,\beta_d,s\right) := \frac{1}{(d+1)n} \sum_{k=0}^d \sum_{j=1}^n (y_{kj}-(\beta_k+  s x_j))^2
\end{equation}
is minimal. Standard least-squares calculations show that these
uniquely defined values are
\begin{eqnarray}
\label{eq:s}
\hat s^{(n)} &=& \dfrac{\sum_{k=0}^d  \sum_{j=1}^n y_{kj} 
    (x_j - \bar x_n)}{(n-1)(d+1) S^2_n},
     \\
\label{eq:beta_k}
  \hat \beta_k^{(n)} &=& \bar y_{kn} - \bar x_n \hat s^{(n)}, \qquad
  k=0,\ldots,d,
\end{eqnarray}
where $\bar{x}_n := \frac{1}{n}\sum_{j=1}^n x_j$, $S_n^2 =
\frac{1}{n-1}\sum_{j=1}^n (x_j-\bar x_n)^2$ and $\bar y_{kn}  :=
\frac{1}{n}\sum_{j=1}^n y_{kj}$. We propose $\hat s^{(n)}$ as an
estimator of the fractal dimension $s$ and
$\widehat{|\sC_k(F)|}=\exp(\hat \beta_k^{(n)})$ as an estimator of
$|\sC_k(F)|$, $k=0,\ldots,d$.

\begin{remark} \label{rem:J}
If for some indices $k\in\{0,\ldots,d\}$ assumption
(A2) is not satisfied, one can simply abandon
these indices and consider the least-squares estimators based on
the remaining data. In general, for a subset
$J\subseteq\{0,\ldots,d\}$ of indices, one can consider the
estimators
\begin{align}
\label{eq:sJ}
\hat s^{(J,n)} &= \dfrac{\sum_{k\in J}  \sum_{j=1}^n y_{kj} 
    (x_j - \bar x_n)}{(n-1)|J| S^2_n},
     \\
\label{eq:beta_kJ}
  \hat \beta_k^{(J,n)} &= \bar y_{kn} - \bar x_n \hat s^{(J,n)}, \qquad k\in J\,,
\end{align}
which minimize the sum
\begin{equation}
\label{eq:S^2J}
\frac{1}{|J|n} \sum_{k\in J} \sum_{j=1}^n (y_{kj}-(\beta_k+  s
x_j))^2.
\end{equation}
Here $|J|$ denotes the cardinality of the finite set $J$. If
$J=\{d\}$, i.e.~if only the volume $C_d(F_\eps)$ is considered,
the estimators specialize to those provided by the sausage method.
Hence the sausage method is included as a special case in our
considerations. We use the notation $\hat s^{(\{d\},n)}$ and
$\hat{\mathcal{M}}^{(n)}:=\exp(\hat \beta_d^{(\{d\},n)})$ for the
sausage method estimators of the dimension $s$ and the Minkowski
content ${\sM}(F)=\sC_d(F)$ of the set $F$.
\end{remark}

\paragraph{Second method: Quasi--linear regression.}
We assume now that $F\subset\R^d$ is a set satisfying assumptions (A1) and (A2) but not (A3). Instead, we assume the following:
\begin{itemize}
  \item[(A3')] For some $h\in(0,1)$ and each $k=0,\ldots,d$, one has the asymptotic relation
   \begin{align} \label{A3'}
     \eps^{s-k} C_k(F_\eps) \sim p_k(\eps)\quad \text{ as } \eps\searrow 0
   \end{align}
   where $p_k:(0,\infty)\to \R$ is a bounded, (multiplicatively) periodic function with period $h$.
\end{itemize}
This assumption is on the one hand motivated by the known results
for arithmetic self-similar sets, where the existence of such a
periodic function was shown. On the other hand, condition (A3')
ensures the existence of the average fractal curvatures as defined
in \eqref{eq:avg-frac-curv2}. Indeed, $\asC_k(F)$ is given in
terms of the function $p_k$ by \eqref{eq:Int_curv-formula}.

We point out that assumption (A3') is strictly weaker than (A3). For any set $F$ satisfying (A3), the relations \eqref{A3'} hold for the constant
 functions $p_k\equiv\sC_k(F)$ (and some arbitrary $h>0$). This will allow in particular to apply the second method also when the  (non-averaged)
fractal curvatures exist.
Indeed, in a way, the first method can be viewed as a special case of the second method described below.

For a set of $\eps$-values $\eps_1,\ldots,\eps_n$, recall the
notation $x_j = -\log \eps_j$ and $y_{kj} = \log \left(
\eps_j^{-k} |C_k(F_{\eps_j})|\right)$, $k=0,\ldots,d$, from
\eqref{eq:def-xj} and \eqref{eq:def-variables}. The relation
\eqref{A3'}  suggests that if the radii $\eps_j$ are sufficiently
small, then the points $(y_{0j},y_{1j},\ldots, y_{dj})$, $j =
1,\ldots,n$ will lie close to the graph of a function which is the
sum of a linear and a periodic function (see
Figure~\ref{fig:3d-plot}):
\begin{align*} 
\begin{pmatrix}
  y_{0 j} \\ y_{1 j} \\ \vdots \\ y_{d j}
\end{pmatrix}
\approx
sx_j
\begin{pmatrix}
   1 \\ 1 \\ \vdots \\  1
 \end{pmatrix}
 +
\begin{pmatrix}
  \log |p_0(\eps_j)| \\ \log |p_{1}(\eps_j)| \\ \vdots \\ \log |p_d(\eps_j)|
 \end{pmatrix},
\qquad  j=1,\ldots, n.
\end{align*}

For $k=0,\ldots,d$, let the functions $g_k:\R\to (0,\infty)$ and $f_k:\R\to \R$ be given by
\begin{align}\label{eq:f_k}
g_k(x)=|p_k(e^{-x})| \quad \text{ and } \quad f_k(x)=\log
g_k(x)-\beta_k,
\end{align}
where
\begin{align*}
\beta_k:=\frac 1{h_0}\int_0^{h_0} \log
g_k(x) \, dx.
\end{align*}
Note that the multiplicative periodicity of $p_k$ (with period $h=e^{-h_0}$) implies that $f_k$ (as well as $g_k$) is additive periodic with
period $h_0>0$.
The reason to subtract $\beta_k$ in \eqref{eq:f_k} is to center $f_k$, that is, to have $\int_0^{h_0} f_k(x) dx=0$.
Observe that, in case assumption (A3) is satisfied, that is, when $p_k$ is a constant function, $\beta_k$ has the same meaning as before in the first method.

It is plausible to expect the following regression structure
\begin{equation}
\label{eq:regr_final}
 y_{ki}=\beta_k+s \cdot x_i + f_k(x_i)+ \delta_{ki}, \qquad i=1,\ldots, n,
\end{equation}
where $T_k(x):=\beta_k+s \cdot x$ is the \emph{polynomial part}
and $f_k(x)$ is the \emph{seasonal part} of the above time series,
whereas the errors $\delta_{kj}$ are assumed to be correlated
random variables  with $\E \delta_{kj}=0$ and $\var \, \delta_{kj}
\in (0,\infty)$ for $k=0,\ldots, d$ and $j=1,\ldots,n$.

The seasonal part $f_k$ is assumed to be the finite Fourier series
\begin{align*}
f_k(x)=\sum\limits_{j=1}^m \left( \tilde{a}_{kj}\cos \left(2\pi
jx/h_0 \right) + \tilde{b}_{kj}\sin \left(2\pi jx/h_0 \right)
\right)
\end{align*}
for some $m\in\N$. Standard operations with trigonometric
functions allow to write $f_k$ in the form
$$
f_k(x)=\sum\limits_{j=1}^m  b_{kj} \cos \left(\mu_j x +
\varphi_{kj} \right)
$$
with $\mu_j=2\pi j/h_0$ and some $b_{kj}, \varphi_{kj}\in \R$.

First we assume that the number $m$ and the period $h_0$
(and thus all $\mu_j$) are known. Standard methods of time series
analysis (cf.\ e.g.\ \cite[Chapter 9]{Loebus}) can be used to
design the least squares estimators $\hat{\beta}_{k}^{(n)}$,
$\hat{s}_{k}^{(n)}$, $\hat{b}_{kj}^{(n)}$,
$\hat{\varphi}_{kj}^{(n)}$ of $\beta_k$, $s$, $b_{kj}$,
$\varphi_{kj}$
 so that
\begin{equation}\label{eq:EstFm}
 \hat{f}_{k}^{(n)}(x)=\sum\limits_{j=1}^m  \hat{b}_{kj}^{(n)}
\cos \left(\mu_{j} x + \hat{\varphi}_{kj}^{(n)} \right)
\end{equation}
is an estimator of $f_k(x)$.  Namely, regression
\eqref{eq:regr_final} is interpreted as a linear regression with
respect to the parameters $\beta_k$, $s$, $b_{kj} \cos
\varphi_{kj}$, $b_{kj} \sin \varphi_{kj}$, and those are estimated
by ordinary least squares. By the relation
\eqref{eq:Int_curv-formula} and
assumption (A2), 
we have
$$
|\overline{\sC}_k(F)|= \frac {\exp\{\beta_k\}}{h_0}
  \int_0^{h_0} \exp\{ f_k(x)\}\, {d} x,
$$
which allows for the following estimator of (the absolute value of) the $k$--th
fractal curvature
\begin{equation}\label{eq:EstFracCurv}
 |\widehat \sC_{k}^{(n)}(F)|= \frac {\exp\{\hat{\beta}_{k}^{(n)}\}}{h_0}
  \int_0^{h_0} \exp\{ \hat{f}_{k}^{(n)}(x)\}\, {d} x, \quad k=0,\ldots, d.
\end{equation}

In most situations, the numbers $m$ and $h_0$ will be unknown.
They may be estimated as follows. First, the coefficients of the
polynomial part are estimated by means of ordinary least squares
from the linear regression equation \eqref{eq:regr_final} where
$f_k(x_j)+ \delta_{kj}$ are interpreted as new random errors.
Then, the estimated polynomial part is subtracted from the
variables $y_{kj}$, and  the period $h_0$ (and hence frequences
$\mu_j$) are estimated by means of the periodogram of the
resulting time series $\{ \tilde{y}_{kj}, \, j\in\Z \}$ without
trend, see \cite[Section 9.1]{Loebus} and Figure \ref{fig:perio}.
Namely, if $\{ \delta_{ki} \}$ is a stationary ARMA process then
the periodogram
$$I_n(t)=\frac{1}{2\pi n }\left| \sum\limits_{j=1}^n e^{-ijt} \tilde{y}_{kj}  \right|^2, \quad t\in [-\pi,\pi] $$
behaves on average as $\E I_n(t)= O(n)$ as $n\to\infty$ for
$t=\pm\mu_j$ and $t=0$ whereas it is bounded for all other $t$,
cf. \cite[relation (1.5), p.~204]{Loebus}. A more precise result
can be found in \cite[Theorem 5, p.~54]{QuiHan01} for stationary
mixing sequence $\{ \delta_{ki} \}$:
$$
\sup_{t\in J_n} I_n(t)=O(\log\log n) \quad \mbox{a.s.}
$$
where $J_n=(- \log^a n/(2n) \pm\mu_j, \log^a n/(2n) \pm\mu_j )$
for some $a\ge 0$ and any $j=1,\ldots, m$.
 Thus, the positions ($t$--values)
of high peaks of $I_n$ yield estimators for $\mu_j$ (and hence
$h_0$).

For the case of stationary regression errors $\{ \delta_{ki}\}$,
the book \cite[p.~53-54]{QuiHan01} proposes an estimate
\begin{equation}\label{estim_h0}
\widehat{h}_0=2\pi/\widehat{\mu}_1 \quad \mbox{  with } \quad
\widehat{\mu}_1=\mbox{argmax}_{\mu} \sum\limits_{j=1}^m I_n(j\mu).
\end{equation}
We refer the reader to \cite{QuiHan01} for a discussion and
comparison of other estimation methods of the frequency $\mu_1$.

Then the estimated values of $h_0$ and $\mu_j$ should be plugged
into the formula \eqref{eq:EstFracCurv}. The value of $m$ should
be taken reasonably large. Its order can be estimated counting the
number $l$ of high peaks of $I_n$ and setting
$\widehat{m}=\lfloor(l-1)/2\rfloor$ where $\lfloor b \rfloor$
denotes the integral part of a real number $b$, cf.
\cite[p.~205]{Loebus}.

\begin{figure}[ht]
  \centering
\includegraphics[width = 0.8\textwidth]{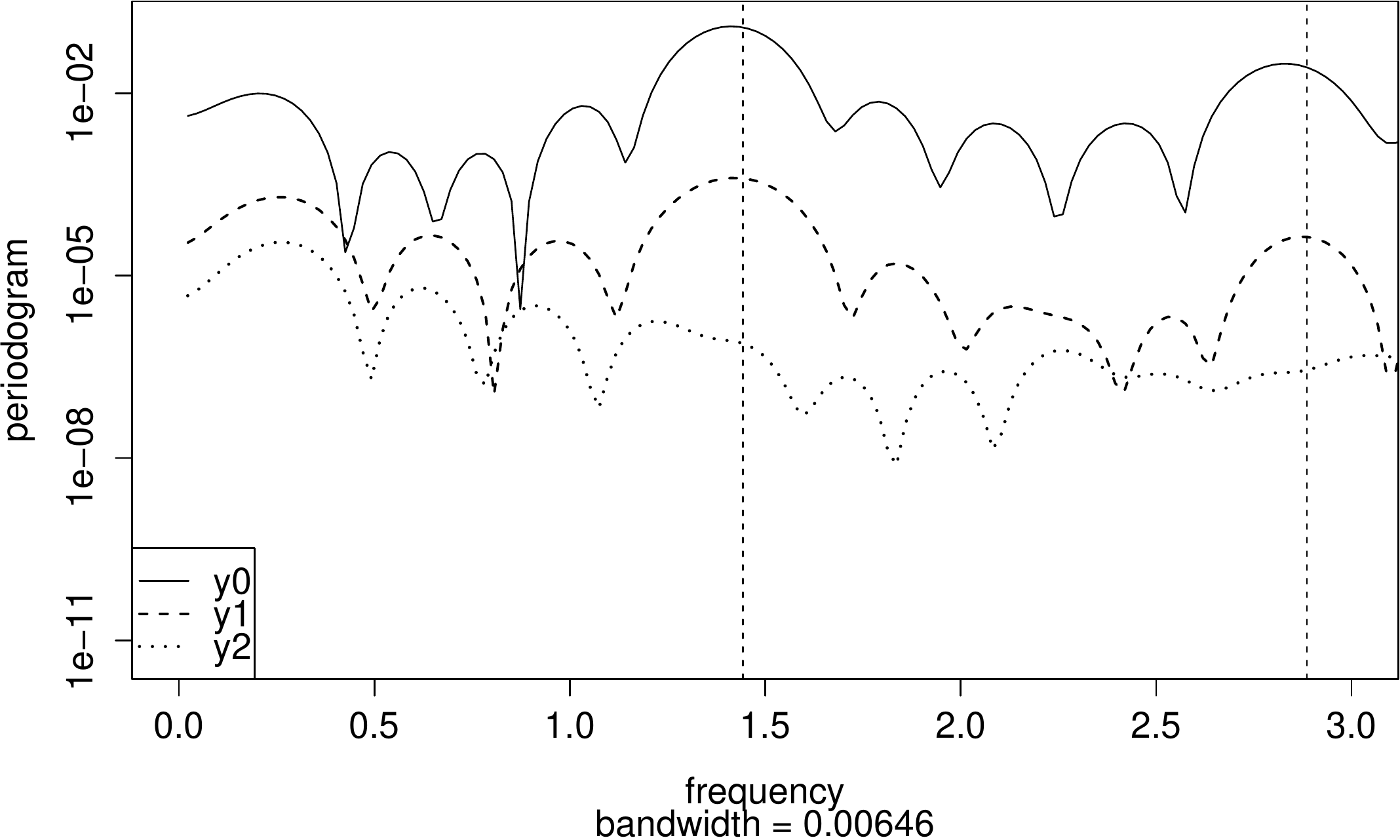}
\caption[Periodogram] {The periodogram of the centered and
rescaled data $(y_0,y_1,y_2)$ from the Sierpi\'{n}ski gasket. The
data is $\log(2)$-periodic, hence the maxima occur at multiples of
$1/\log(2)$ (vertical dashed lines). The periodogram was
calculated using the \texttt R function \texttt{spec.pgram()},
with option \texttt{pad=10}. \label{fig:perio}}
\end{figure}

The procedure of estimating the dimension $s$ and the (absolute
values of) average fractal curvatures
 $\overline \sC_k(F)$ from a given binary image $\tilde F$ of a fractal $F$ now runs as follows:
Dilate  $\tilde F$ by a ball $B_\eps(0)$ for the given set of
radii $\eps\in\{\eps_1,\ldots, \eps_n \}$ and measure the
corresponding intrinsic volumes $|C_k(\tilde F_{\eps})|$,
$k=0,\ldots,d$. Calculate the point cloud
$\{(y_{0j},y_{1j},\ldots, y_{d j})\}_{j=1}^n$ and apply the
regression \eqref{eq:regr_final} either to the data sets
$\{y_{kj}\}_{j=1}^n$ separately  for each $k=0,\ldots, d$ or to
the whole cloud as described above. This results in the estimators
$\hat{s}_{k}^{(n)}$, $|\widehat \sC_{k}^{(n)}(F)|$, $k=0,\ldots,
d$ in the first case and $\tilde{s}^{(n)}$, $|\widetilde
\sC_{k}^{(n)}(F)|$, $k=0,\ldots, d$ in the second case of
simultaneous regression.
\begin{remark}
Using $k$ separate regressions for estimating $s$ and
 $|\overline \sC_{k}(F)|$, $k=0,\ldots, d$,
 notice that the fractal dimension $s$ of $F$ does
not depend on the order $k$ of a considered curvature measure.
However, the estimator $\hat{s}_{k}^{(n)}$ depends on
$k=0,\ldots,d$ as a solution of \eqref{eq:regr_final}. Hence, the
estimation procedure for $s$ can be made more robust by setting $\hat{s}^{(n)}$ to be the empirical median of the sample
$\{ \hat{s}_{k}^{(n)}, \; k=0,\ldots,d \}$.
\end{remark}

The first method can be seen as a special case of the second one
when the seasonal part is assumed to be zero.

\subsection{Preliminary results from linear regression}

The main cause for the inaccuracy of the estimators of Section
\ref{subsec:estimators} is that a large amount of information is
lost in the digitization procedure. The dilation
radius can not be taken arbitrarily small in practice, which would
be necessary for the calculation of a limit. The resolution of the
digitized set determines a lower bound for the dilation radii. However, if we assume that the radii
can be chosen arbitrarily small, i.e., if the resolution increases
to infinity,  then the weak consistency (together with rates of convergence for certain choices of the sequence of radii) of the above estimators can be shown under some rather mild assumptions on the covariance structure of the (random) discretization and computation errors.
Here the choice of the radii can not be completely arbitrary. For simplicity, we only consider monotone sequences. In the first method,
some bound on the speed of decay of the radii will be sufficient, while in the second method we only allow for radii forming an arithmetic sequence on the logarithmic scale.

\paragraph{Weak consistency in the linear regression.}

Consider the classical multivariate linear regression of full rank, i.e.
\begin{align} \label{eq:Regression}
y_l=X_l\beta +\delta_l
\end{align}
with $y_l$ and $\delta_l$ being random vectors of dimension $l\in\N$, $X_l$ the deterministic regression matrix of size $l \times q$
and rank $q\le l$, and $\beta$ being the $q$--dimensional vector of regression parameters.
Note that ${\rm rank}(X_l)=q$ implies that the $q\times q$ matrix $X_l^\top X_l$ is positive definite and has thus strictly positive eigenvalues, which we denote by $\lambda_1,\ldots, \lambda_q$.

Let
$\hat\beta^{(l)}$ be the
least-squares estimator of $\beta$:
\begin{align} \label{eq:LSEst}
\hat\beta^{(l)}=\left( X_l^\top X_l\right)^{-1}X_l^\top y_l \,.
\end{align}
Assume that the coordinates  $\delta_{lj}$, $j=1,\ldots,l$ of $\delta_l$ are a sequence of
(dependent) random variables with positive finite variance
satisfying $\E(\delta_{lj})=0$. Assume that the covariance matrices
$Q_l:=\E(\delta_l \delta_l^\top)$ satisfy
\begin{equation}\label{eq:Ql-ass}
0<\inf_{l\in\N}\nu_{\min}(Q_l)\le
\sup_{l\in\N}\nu_{\max}(Q_l)=:\nu^* <\infty,
\end{equation}
where $\nu_{\min}(Q_l)$ and $\nu_{\max}(Q_l)$ are the
smallest and the largest eigenvalues of $Q_l$, respectively.

\begin{lemma}({\cite[Theorem~3.1]{drygas76}}) \label{lemmDrygas}
 The estimator $\hat\beta^{(l)}$ is weakly
consistent for $\beta$  if and only if $\lambda_{\min}(X_l^\top X_l):=\min_{i=1,\ldots,q} \lambda_i \to
\infty$ as $l\to \infty.$
\end{lemma}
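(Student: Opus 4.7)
The plan is to analyse the estimation error
\[ \hat\beta^{(l)}-\beta = (X_l^\top X_l)^{-1}X_l^\top \delta_l, \]
which follows directly from \eqref{eq:Regression} and \eqref{eq:LSEst}, together with its covariance matrix
\[ \Sigma_l := \E\bigl[(\hat\beta^{(l)}-\beta)(\hat\beta^{(l)}-\beta)^\top\bigr] = (X_l^\top X_l)^{-1}X_l^\top Q_l X_l (X_l^\top X_l)^{-1}. \]
Since $\hat\beta^{(l)}$ is unbiased, Chebyshev's inequality reduces coordinatewise weak consistency to the convergence $\nu_{\max}(\Sigma_l)\to 0$, so the whole argument boils down to controlling this quantity precisely by $1/\lambda_{\min}(X_l^\top X_l)$.

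For the sufficiency direction, I would apply the standard operator-norm inequality $\nu_{\max}(A M A^\top)\le \nu_{\max}(M)\,\nu_{\max}(AA^\top)$, valid for symmetric positive semidefinite $M$, with the choices $A=(X_l^\top X_l)^{-1}X_l^\top$ and $M=Q_l$. Combining the hypothesis $\nu_{\max}(Q_l)\le \nu^*$ with the identity $AA^\top=(X_l^\top X_l)^{-1}$ yields
\[ \nu_{\max}(\Sigma_l) \;\le\; \frac{\nu^*}{\lambda_{\min}(X_l^\top X_l)}, \]
which tends to zero by hypothesis. Each coordinate variance of $\hat\beta^{(l)}-\beta$ is dominated by this bound, so Chebyshev gives $\hat\beta^{(l)}\tow\beta$.

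For the necessity direction I would argue by contrapositive. Assume $\lambda_{\min}(X_l^\top X_l)\not\to\infty$ and extract a subsequence on which $\lambda_{\min}(X_l^\top X_l)\le M$. Writing $\nu_*:=\inf_l\nu_{\min}(Q_l)>0$, which is strictly positive thanks to \eqref{eq:Ql-ass}, the operator lower bound $Q_l\succeq \nu_* I$ combined with the same computation gives $\Sigma_l \succeq \nu_*(X_l^\top X_l)^{-1}$, whence $\nu_{\max}(\Sigma_l)\ge \nu_*/M>0$ along the subsequence. Letting $c_l$ be a unit eigenvector realising this maximum eigenvalue, the centred random scalar $c_l^\top(\hat\beta^{(l)}-\beta)$ then has variance uniformly bounded below.

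The main obstacle will be the necessity direction. A centred scalar sequence whose variance is bounded away from zero can, a priori, still converge to zero in probability (e.g.\ by placing a vanishing mass far out in the tails), so the variance lower bound does not by itself preclude weak consistency. Ruling this out requires additional structural information about $\delta_l$ beyond the two-sided covariance bound \eqref{eq:Ql-ass}: the delicate point is that $c_l^\top(\hat\beta^{(l)}-\beta)$ is a \emph{linear} functional of $\delta_l$, whose law therefore cannot be arbitrarily concentrated around zero while carrying a fixed amount of variance. Turning this idea into a clean contradiction, e.g.\ via a Paley--Zygmund-type second-moment estimate applied along the eigenvector $c_l$, is the technical heart of Drygas' argument in \cite{drygas76}, and I would quote it at this step rather than reprove it in full.
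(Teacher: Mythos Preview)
The paper does not prove this lemma; it is quoted from \cite{drygas76} and used as a black box, and in fact only the sufficiency direction is ever invoked (in the proofs of Theorem~\ref{thm:weak-consistency} and Lemma~\ref{lemm:Cons2MethRegression}). Your sufficiency argument is correct and is essentially the mechanism behind the paper's own Lemma~\ref{lem:weakCons_linReg}, which bounds $P(|\hat\beta^{(l)}-\beta|>\eps)$ via the trace of $\Sigma_l$ rather than its top eigenvalue.

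The necessity direction, as you yourself flag, is where the difficulty lies, and your proposed patch does not close the gap. A Paley--Zygmund argument for $c_l^\top(\hat\beta^{(l)}-\beta)$ would need a uniform bound on its fourth moment relative to the square of its variance, and nothing in the standing hypotheses---zero mean plus the two-sided spectral bound \eqref{eq:Ql-ass}---supplies this. Indeed, under only those hypotheses the necessity, read literally for a \emph{fixed} error sequence, can fail: take $q=1$, $X_l=e_1\in\R^l$, $Q_l=I_l$, and let $\delta_{l,1}$ equal $\pm l$ each with probability $1/(2l^2)$ and $0$ otherwise (with the remaining coordinates, say, independent standard normals); then $\lambda_{\min}(X_l^\top X_l)\equiv 1$ while $\hat\beta^{(l)}-\beta=\delta_{l,1}\tow 0$. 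Drygas' theorem therefore relies on structure on the error sequence beyond what is recorded here. The cleanest self-contained route to necessity is to note that consistency must in particular hold when $\delta_l$ is Gaussian with covariance $Q_l$; then $c_l^\top(\hat\beta^{(l)}-\beta)\sim\mathcal N(0,\sigma_l^2)$ with $\sigma_l^2\ge \nu_*/M>0$ along your subsequence, which visibly does not tend to zero in probability. That argument, rather than Paley--Zygmund, is what you should cite or reproduce.
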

The following result provides an estimate for the rate of convergence of $\hat\beta^{(l)}$ to
$\beta$. We write $\tr(A)$ for the trace of a matrix $A$.
\begin{lemma}\label{lem:weakCons_linReg}
For each $\eps>0$,
\begin{equation}\label{eq:estWeakCons}
 P(|\hat\beta^{(l)}-\beta|>\eps)\le \frac{\nu^*}{\eps^2} \frac{\tr(X_l^\top X_l)}{\left(\lambda_{\min}(X_l^\top X_l)\right)^2}\,. 
 \end{equation}
\end{lemma}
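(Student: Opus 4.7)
The plan is to apply a multivariate Chebyshev inequality to the centered random vector $\hat\beta^{(l)}-\beta$ and then bound the trace of its covariance matrix using the two-sided spectral control on $Q_l$ and on $X_l^\top X_l$.

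First I will write $\hat\beta^{(l)}-\beta$ in a form that makes its distribution transparent. Plugging $y_l=X_l\beta+\delta_l$ into \eqref{eq:LSEst} yields
\begin{equation*}
\hat\beta^{(l)}-\beta=(X_l^\top X_l)^{-1}X_l^\top \delta_l,
\end{equation*}
which is centered because $\E\delta_l=0$, and has covariance matrix
\begin{equation*}
\Sigma_l:=(X_l^\top X_l)^{-1}X_l^\top Q_l X_l(X_l^\top X_l)^{-1}.
\end{equation*}
Since $\E|\hat\beta^{(l)}-\beta|^2=\tr(\Sigma_l)$, Chebyshev's inequality gives
\begin{equation*}
P(|\hat\beta^{(l)}-\beta|>\eps)\le \frac{\tr(\Sigma_l)}{\eps^2},
\end{equation*}
so the whole task reduces to bounding $\tr(\Sigma_l)$ from above by $\nu^*\tr(X_l^\top X_l)/\lambda_{\min}(X_l^\top X_l)^2$.

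Next, I will carry out this trace bound in two steps using the elementary fact that if $P$ is symmetric and $R$ is positive semidefinite then $\tr(PR)\le\lambda_{\max}(P)\,\tr(R)$. Set $A:=(X_l^\top X_l)^{-1}$, which is symmetric with eigenvalues $1/\lambda_i$, so $\lambda_{\max}(A^2)=1/\lambda_{\min}(X_l^\top X_l)^2$. By the cyclic property,
\begin{equation*}
\tr(\Sigma_l)=\tr\bigl(A^2\, X_l^\top Q_l X_l\bigr),
\end{equation*}
and since $Q_l$ is positive semidefinite so is $X_l^\top Q_l X_l$; hence
\begin{equation*}
\tr(\Sigma_l)\le \lambda_{\max}(A^2)\,\tr(X_l^\top Q_l X_l)=\frac{\tr(X_l^\top Q_l X_l)}{\lambda_{\min}(X_l^\top X_l)^2}.
\end{equation*}
Applying the same inequality once more, combined with $\lambda_{\max}(Q_l)\le \nu^*$ from \eqref{eq:Ql-ass} and the identity $\tr(X_l X_l^\top)=\tr(X_l^\top X_l)$, gives
\begin{equation*}
\tr(X_l^\top Q_l X_l)=\tr(Q_l X_l X_l^\top)\le \nu^*\,\tr(X_l^\top X_l).
\end{equation*}

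Chaining these two bounds produces exactly \eqref{eq:estWeakCons}. There is no real obstacle here beyond keeping track of the symmetry and positivity of the matrices involved so that the trace-inequality $\tr(PR)\le\lambda_{\max}(P)\tr(R)$ can be invoked; the only assumption from \eqref{eq:Ql-ass} that is actually used is the uniform upper bound $\nu^*$ (the lower bound will be relevant later for the matching lower bound and hence for consistency via Lemma~\ref{lemmDrygas}, but is not needed for this one-sided estimate).
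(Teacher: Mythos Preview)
Your proof is correct and follows essentially the same approach as the paper: express $\hat\beta^{(l)}-\beta=(X_l^\top X_l)^{-1}X_l^\top\delta_l$, apply Markov/Chebyshev to reduce to bounding $\tr(\Sigma_l)$, and then control this trace via the eigenvalue bounds on $(X_l^\top X_l)^{-1}$ and $Q_l$. The only cosmetic difference is that the paper carries out the trace bound by explicitly diagonalizing $X_l^\top X_l$ and $Q_l$ and tracking the estimates through, whereas you invoke the inequality $\tr(PR)\le\lambda_{\max}(P)\,\tr(R)$ (for $P$ symmetric, $R$ positive semidefinite) directly; your formulation is a clean abstraction of the same computation.
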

\begin{proof}
It is easy to see that  $\hat\beta^{(l)}-\beta=\left( X_l^\top X_l\right)^{-1}X_l^\top \delta_l$ and the covariance matrix of $\hat\beta^{(l)}-\beta$
is given by $\mbox{Cov}(\hat\beta^{(l)}-\beta)=\left( X_l^\top X_l\right)^{-1}X_l^\top Q_l X_l \left( X_l^\top X_l\right)^{-1} $.
Using the  Markov inequality, we get
$$
 P(|\hat\beta^{(l)}-\beta|>\eps)\le \tr \left( \mbox{Cov}(\hat\beta^{(l)}-\beta) \right) /\eps^2\,.
$$
Since the matrices $Q_l$ and $X_l^TX_l$ are symmetric, they can be diagonalized, that is, there exist orthogonal matrices $C, B$ and diagonal matrices
$$
\Lambda=\mbox{diag}(\lambda_1,\ldots,\lambda_q) \quad \mbox{ and } \quad N=\mbox{diag}(\nu_1,\ldots,\nu_l)
$$
such that $X_l^\top X_l=C^\top \Lambda C$ and  $Q_l=B^\top N B$, respectively, where the eigenvalues $\lambda_i$ of  $X_l^TX_l$ and $\nu_j$ of $Q_l$ are strictly positive, since $X_l^TX_l$ has full rank and due to assumption \eqref{eq:Ql-ass}, respectively.
Note that  $\left( X_l^\top X_l\right)^{-1}=C^\top \Lambda^{-1} C$ with $\Lambda^{-1}=\mbox{diag}(1/\lambda_1,\ldots,1/\lambda_q)$.
Using the cyclic commutativity property of the trace one gets
after standard calculations that
\begin{align*}
\tr \big( &\mbox{Cov}(\hat\beta^{(l)}-\beta) \big)=
\tr \left(\Lambda^{-2} C X_l^\top B^\top N B X_l C^\top
\right) \le  \max_{i=1,\ldots,q} \lambda_i^{-2}\cdot  \tr
\left( C X_l^\top
B^\top N B X_l C^\top \right)  \\
 &=  \left( \lambda_{\min}(X_l^\top X_l)\right)^{-2}  \tr \left(
X_l X_l^\top B^\top N B  \right)\le \left( \lambda_{\min}(X_l^\top X_l)\right)^{-2} \max_{i=1,\ldots,q} \nu_i \cdot \tr
\left( B X_l X_l^\top B^\top   \right)\\
 &\le \left(
\lambda_{\min}(X_l^\top X_l)\right)^{-2} \nu^* \cdot \tr
\left( X_l  X_l^\top \right)=  \left( \lambda_{\min}(X_l^\top X_l)\right)^{-2} \nu^* \tr
\left( X_l^\top  X_l \right).
\end{align*}
In the derivation we have used in particular that the matrices $C X_l^\top
B^\top N B X_l C^\top$ and $B X_l X_l^\top B^\top $ are covariance matrices with nonnegative entries on the diagonal. This completes the proof of \eqref{eq:estWeakCons}.
\end{proof}

\paragraph{Asymptotic normality.}

Imposing additional assumptions on the dependence structure of
regression errors in  \eqref{eq:Regression}, one can prove the
asymptotic normality of the least squares estimator
$\hat\beta^{(l)}$. For simplicity, we do it for
$\delta_l=\sqrt{Q_l} \gamma_l$, where $\sqrt{Q_l}$ is the square
root of the symmetric positive definite matrix $Q_l \in \mathbb
R^{l \times l}$ and $\gamma_l=(\gamma_{l1},\ldots,
\gamma_{ll})^\top$ is a random vector with iid coordinates
$\gamma_{lj}$, $\E (\gamma_{lj})=0$, $\E (\gamma_{lj}^2)=1$ for
all $j=1,\ldots,l$. This corresponds to the case when for each $l$
$(\delta_{lj})$ is a linear process with a finite range of
dependence.

\begin{theorem} \label{thm:ass-normality}
Under the assumptions on $Q_l$ in the paragraph above, it holds
\begin{equation}\label{eq:assnorm}
\frac{t^\top (\hat\beta^{(l)}-\beta)}{\sqrt{t^\top  \left(
X_l^\top X_l\right)^{-1}X_l^\top Q_l X_l \left( X_l^\top
X_l\right)^{-1} t}} \tod Z\sim N(0,1),\qquad l\to\infty
\end{equation}
for all $t\in\R^q\setminus\{0\}$ such that
\begin{equation}\label{eq:assnorm_cond}
\frac{\| A_l^\top t \|_2}{\| A_l^\top t \|_\infty}
\to\infty,\qquad l\to\infty,
\end{equation}
where $ A_l=\left( X_l^\top X_l\right)^{-1} X_l^\top \sqrt{Q_l} \in \mathbb R^{q \times l} $,
$\|\cdot \|_2$ and $\|\cdot \|_\infty$ are the Euclidean and
the maximum norm in $\R^l$, respectively,  and $\hat\beta^{(l)}$ is
the estimator \eqref{eq:LSEst}.
\end{theorem}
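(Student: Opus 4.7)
The plan is to recognise the left-hand side of \eqref{eq:assnorm} as a normalised weighted sum of independent, mean-zero random variables and then apply the Lindeberg--Feller central limit theorem for triangular arrays. First I would rewrite the estimation error as
$$
\hat\beta^{(l)}-\beta = (X_l^\top X_l)^{-1} X_l^\top \delta_l = A_l\gamma_l,
$$
so that for any $t\in\R^q\setminus\{0\}$,
$$
t^\top(\hat\beta^{(l)}-\beta)=(A_l^\top t)^\top \gamma_l = \sum_{j=1}^l c_{lj}\,\gamma_{lj},
\qquad c_{lj}:=(A_l^\top t)_j.
$$
Since the $\gamma_{lj}$ are independent with mean $0$ and unit variance, the variance of this sum is $\sum_{j=1}^l c_{lj}^2 = \|A_l^\top t\|_2^2$, and a direct calculation identifies this with the quantity under the square root in the denominator of \eqref{eq:assnorm}, namely $t^\top A_l A_l^\top t = t^\top (X_l^\top X_l)^{-1} X_l^\top Q_l X_l (X_l^\top X_l)^{-1} t$. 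Thus the theorem reduces to showing that the self-normalised sum $S_l:=\|A_l^\top t\|_2^{-1}\sum_{j=1}^l c_{lj}\gamma_{lj}$ converges in distribution to $N(0,1)$.

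Second, I would verify the Lindeberg condition for the triangular array $\{c_{lj}\gamma_{lj}/\|A_l^\top t\|_2\}_{j=1}^l$. For every fixed $\eps>0$ one needs
$$
L_l(\eps):=\frac{1}{\|A_l^\top t\|_2^2}\sum_{j=1}^l c_{lj}^2\,\E\!\left[\gamma_{lj}^2\,\mathbf{1}\{|c_{lj}\gamma_{lj}|>\eps\|A_l^\top t\|_2\}\right]\longrightarrow 0.
$$
Using that $|c_{lj}|\le\|A_l^\top t\|_\infty$ for every $j$, the indicator can be bounded (uniformly in $j$) by
$$
\mathbf{1}\{|\gamma_{lj}|>\eps\|A_l^\top t\|_2/\|A_l^\top t\|_\infty\},
$$
and the $\gamma_{lj}$ have a common distribution (say, that of a random variable $\gamma$ with $\E\gamma^2=1$). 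Consequently,
$$
L_l(\eps)\;\le\;\E\!\left[\gamma^2\,\mathbf{1}\!\left\{|\gamma|>\eps\,\|A_l^\top t\|_2/\|A_l^\top t\|_\infty\right\}\right],
$$
and by assumption \eqref{eq:assnorm_cond} the threshold inside the indicator tends to $+\infty$ with $l$. Dominated convergence (with the integrable dominating function $\gamma^2$) then forces the right-hand side to vanish, so the Lindeberg condition holds.

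Finally, an application of the Lindeberg--Feller CLT to the array $\{c_{lj}\gamma_{lj}\}$ yields $S_l\tod N(0,1)$, which is exactly \eqref{eq:assnorm}. I expect no serious obstacle: the algebraic identification of the normalising variance with $\|A_l^\top t\|_2^2$ is routine once $A_l$ is introduced, and the verification of Lindeberg's condition rests on the single key observation that \eqref{eq:assnorm_cond} is precisely the statement that no individual weight $c_{lj}$ is asymptotically non-negligible compared with the total $\ell^2$-mass. The only technical subtlety worth mentioning explicitly is that, should the common distribution of the $\gamma_{lj}$ be allowed to depend on $l$, one would need a uniform integrability assumption on $\{\gamma_{lj}^2\}$ to replace the dominated convergence step; under the stated i.i.d.\ hypothesis this is automatic.
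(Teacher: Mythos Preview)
Your proposal is correct and follows essentially the same route as the paper: rewrite $t^\top(\hat\beta^{(l)}-\beta)$ as the weighted i.i.d.\ sum $(A_l^\top t)^\top\gamma_l$, identify the normaliser as $\|A_l^\top t\|_2$, and verify the Lindeberg condition by bounding each weight by $\|A_l^\top t\|_\infty$ so that \eqref{eq:assnorm_cond} drives the truncation threshold to infinity. Your closing remark about uniform integrability when the law of $\gamma_{lj}$ may vary with $l$ is a valid caveat that the paper also sidesteps by tacitly using a single representative $\gamma_{11}$.
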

\begin{proof}
The result follows from the central limit theorem with an application
of the Lindeberg condition.
Define the vector
$$
b_{l}=\frac{A_l^\top t }{\|A_l^\top t  \|_2 } \in \mathbb R^{l}.
$$
and write $ b_{lj} $ for its $ j $-th coordinate.
Let $ Y_{lj} := b_{lj}\gamma_{lj} $.
Since
$$
\| A_l^\top t\|_2^2=t^\top  \left( X_l^\top
X_l\right)^{-1}X_l^\top Q_l X_l \left( X_l^\top X_l\right)^{-1} t,
$$
we notice that $ Y_l := \sum_{j=1}^l Y_{lj} $ equals the left hand side of \eqref{eq:assnorm}.
The doubly indexed sequence $ \{Y_{lj}: 1 \le j \le l \} $ satisfies $\E\,Y_{lj}=0 $,
$\E\,Y_{lj}^2= b_{lj}^2$, $\sum_{j=1}^l b_{lj}^2=1$ for all
$j=1,\ldots,l$ and $l\in\N$. It also satisfies the Lindeberg condition
(see e.g.\ \cite{Feller68}), which can be written in the following form: for each $\eps > 0$,
\begin{equation*}
\lim\limits_{l \to \infty} \sum\limits_{j=1}^l \E\,\left( Y_{lj}^2 \Ind(Y_{lj}^2>\eps^2)\right) = 0.
\end{equation*}
Indeed, employing condition \eqref{eq:assnorm_cond}, for any $\eps>0$ we have
\begin{multline*}
\sum\limits_{j=1}^l \E\,\left( Y_{lj}^2 \Ind(Y_{lj}^2>\eps^2)
\right)= \sum\limits_{j=1}^l b_{lj}^2 \E\,\left( \gamma_{11}^2
\Ind(\gamma_{11}^2>\eps^2/b_{lj}^2) \right)\\
\le \E\,\left( \gamma_{11}^2 \Ind(\gamma_{11}^2>\eps^2  \|A_l^\top
t  \|_2^2/ \|A_l^\top t  \|_\infty^2) \right) \cdot
\sum\limits_{j=1}^l b_{lj}^2 = \E\,\left( \gamma_{11}^2
\Ind(\gamma_{11}^2>\eps^2  \| A_l^\top t\|_2^2/ \|A_l^\top t
\|_\infty^2) \right)\to 0
\end{multline*}
as $l\to\infty$ due to the integrability of $\gamma_{11}^2$.
\end{proof}
The following statement gives a sufficient condition for
\eqref{eq:assnorm_cond}.
\begin{corollary} \label{cor:assnorm_cond2}
If $Q_l=\sigma^2 I_l$ for some $ \sigma > 0 $, where $I_l$ is the
identity matrix, then condition \eqref{eq:assnorm_cond} in
Theorem~\ref{thm:ass-normality} is satisfied for all $t\neq 0$
provided that
\begin{equation}\label{eq:ass_norm_cond_iid}
\lambda_{\min}(X_l^\top X_l) / \| X_l \|_\infty^2\to \infty,
\qquad l\to\infty,
\end{equation}
where $\| X_l \|_\infty = \max_{j\in{1.\ldots,l}} \sum_{k=1}^q
|(X_l)_{jk}|$ is the maximum absolute row sum of the matrix $X_l$
and $\lambda_{\min}(X_l^\top X_l)$ denotes (as before) the
smallest eigenvalue of $X_l^\top X_l$.
\end{corollary}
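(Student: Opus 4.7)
The plan is to specialize the expression $A_l = (X_l^\top X_l)^{-1} X_l^\top \sqrt{Q_l}$ to the case $Q_l = \sigma^2 I_l$, in which case $A_l = \sigma (X_l^\top X_l)^{-1} X_l^\top$ and $A_l^\top t = \sigma X_l w$ with $w := (X_l^\top X_l)^{-1} t \in \R^q$. Then I would derive matching estimates for the two norms appearing in \eqref{eq:assnorm_cond}. A direct calculation yields
$$
\|A_l^\top t\|_2^2 = \sigma^2 w^\top X_l^\top X_l w = \sigma^2\, t^\top (X_l^\top X_l)^{-1} t.
$$
For the maximum norm I would bound coordinate-wise via the elementary estimate
$$
|(A_l^\top t)_j| = \sigma \Bigl|\sum_{k=1}^q (X_l)_{jk} w_k\Bigr| \le \sigma \|X_l\|_\infty \|w\|_\infty \le \sigma \|X_l\|_\infty \|w\|_2,
$$
and then use $\|w\|_2^2 = t^\top (X_l^\top X_l)^{-2} t$ to obtain
$$
\|A_l^\top t\|_\infty^2 \le \sigma^2 \|X_l\|_\infty^2 \, t^\top (X_l^\top X_l)^{-2} t.
$$

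The crucial step is to show that the quadratic-form ratio $t^\top (X_l^\top X_l)^{-1} t \big/ t^\top (X_l^\top X_l)^{-2} t$ admits the clean lower bound $\lambda_{\min}(X_l^\top X_l)$. For this I would diagonalize $X_l^\top X_l = C^\top \Lambda C$ with $\Lambda = \mathrm{diag}(\lambda_1,\ldots,\lambda_q)$, set $u = Ct$, and rewrite the ratio as
$$
\frac{\sum_{i=1}^q u_i^2/\lambda_i}{\sum_{i=1}^q u_i^2/\lambda_i^2}.
$$
Interpreting this as a weighted average of the eigenvalues $\lambda_i$ with nonnegative weights $u_i^2/\lambda_i^2$ (which are not all zero, since $t\neq 0$ and $C$ is orthogonal), the ratio is bounded below by $\min_i \lambda_i = \lambda_{\min}(X_l^\top X_l)$.

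Combining the two bounds gives
$$
\frac{\|A_l^\top t\|_2^2}{\|A_l^\top t\|_\infty^2} \ge \frac{\lambda_{\min}(X_l^\top X_l)}{\|X_l\|_\infty^2},
$$
and assumption \eqref{eq:ass_norm_cond_iid} forces the right-hand side to infinity, which yields \eqref{eq:assnorm_cond}. The argument is essentially an exercise in spectral estimates; no serious obstacle arises. The one subtlety worth flagging is the weighted-average identification used to bound the quadratic-form ratio by $\lambda_{\min}(X_l^\top X_l)$, which keeps the final estimate sharp — a naive bound through the operator norms of $(X_l^\top X_l)^{-1}$ and $(X_l^\top X_l)^{-2}$ would produce a spurious factor of $\lambda_{\max}(X_l^\top X_l)$ in the denominator, which need not stay controlled under the hypothesis.
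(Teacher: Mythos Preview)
Your proof is correct and follows essentially the same route as the paper: diagonalize $X_l^\top X_l$, compute $\|A_l^\top t\|_2^2$ as a quadratic form in $(X_l^\top X_l)^{-1}$, bound $\|A_l^\top t\|_\infty$ via the row-sum norm $\|X_l\|_\infty$ times $\|(X_l^\top X_l)^{-1}t\|$, and then bound the resulting ratio of quadratic forms from below by $\lambda_{\min}(X_l^\top X_l)$. Your presentation is in fact slightly tighter than the paper's --- you use $\|w\|_\infty \le \|w\|_2$ directly, whereas the paper invokes a generic norm-equivalence constant $c_1$, and your weighted-average argument for the quadratic-form ratio is shorter than the paper's explicit algebraic manipulation --- but the underlying argument is the same.
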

\begin{proof}
Suppose that $ Q_l = \sigma^2 I_l$.  Using the diagonalisation of $(X_l^\top X_l)^{-1}=C^T\Lambda^{-1}C$ from the proof of Lemma
\ref{lem:weakCons_linReg}, the matrix $A_l A_l^\top$ (defined in Theorem~\ref{thm:ass-normality}) can be represented by
$A_l A_l^\top =  \sigma^2 (X_l^\top X_l)^{-1}= \sigma^2 C^\top
\Lambda^{-1} C$. This yields
\begin{equation*}
\|A_l^\top t \|_2^2 =  t^\top A_l A_l^\top t = \sigma^2 t^\top C^\top
\Lambda^{-1} C t = \sigma^2 \sum\limits_{i=1}^{q}
\frac{b_i^2}{\lambda_j}
\end{equation*}
where $ b =(b_1,\ldots, b_q)^\top := C t $. Moreover, we have
$ A_l^\top = \sigma X_l (X_l^\top X_l)^{-1} = \sigma X_l C^\top
\Lambda^{-1} C, $ and
hence
\begin{align*}
\|A_l^\top t \|_\infty^2 &\le \sigma^2 \|X_l\|_\infty^2 \| C^\top
\Lambda^{-1} C t\|_\infty^2  \le c_1 \sigma^2
\|X_l\|_\infty^2 \| C^\top \Lambda^{-1} C t\|_2^2 \\ & \le c_1
\sigma^2 \|X_l\|_\infty^2 b^\top \Lambda^{-2} b = c_1 \sigma^2
\|X_l\|_\infty^2 \sum\limits_{i=1}^{q}\frac{b_i^2}{\lambda_i^2}
\end{align*}
for some constant $ c_1 > 0 $, using the fact that all norms in
finite-dimensional spaces are equivalent. After some more algebra,
we arrive at the desired estimate:
\begin{align*}
\frac{\|A_l^\top t\|_2^2}{\|A_l^\top t\|_\infty^2} &\ge
\frac{\sum_{i=1}^{q} b_i^2 / \lambda_i} {c_1
\|X_l\|_\infty^2\sum_{i=1}^{q} b_i^2 / \lambda_i^2} =
\frac{\sum_{i=1}^q b_i^2 \left( \prod_{j\neq i}\lambda_j \right) /
(\lambda_1 \cdot \ldots \cdot \lambda_q)}
{c_1 \|X_l\|_\infty^2 \sum_{i=1}^q b_i^2 \left( \prod_{j\neq i}\lambda_j^2 \right)  / (\lambda_1 \cdot \ldots \cdot \lambda_q)^2} \\
&= \frac{\lambda_1 \cdot \ldots \cdot \lambda_q}{c_1
\|X_l\|_\infty^2} \frac{\sum_{i=1}^q b_i^2 \prod_{j\neq
i}\lambda_j} {\sum_{i=1}^q b_i^2 \prod_{j\neq i}\lambda_j^2} =
\frac{1}{c_1 \|X_l\|_\infty^2} \frac{\sum_{i=1}^q \lambda_i
\prod_{j\neq i}\lambda_j^2 b_i^2} {\sum_{i=1}^q \prod_{j\neq
i}\lambda_j^2 b_i^2} \ge \frac{\min_{i=1,\ldots, q} \lambda_i}{c_1
\|X_l \|_\infty^2}.
\end{align*}

\end{proof}

\begin{remark}
If $Q_l$ is known or can be consistently estimated from $m$
independent copies of $y_l$ by $\hat Q_{l,m} \tow Q_l$ as
$m\to\infty$ for each $l$, then the above theorem (together with a
Slutsky argument) can be used in a standard way (see e.g. {\rm
\cite[Section 6.3.2, p.~398]{BickelDoksum01}}) to construct an
asymptotic confidence  interval for the coordinates of $\beta_l$
as well as a large sample Wald's test of the hypothesis $H_0$:
$\beta_{j}=\beta_{j,0}$ vs. $H_1$: $\beta_{j}\neq \beta_{j,0}$ for
a fixed $\beta_{j,0}$ and $j=0,\ldots, q$.
\end{remark}

\subsection{Asymptotics of the first method}

Let $\delta_{kj}$, $k=0,\ldots,d$, $j\in\N$ be a sequence of
(dependent) random variables with positive finite variance
satisfying $\E(\delta_{kj})=0$ and assumption \eqref{eq:Ql-ass}.  Let $n\ge 2$.
Since the measured
intrinsic volumes show an almost periodic oscillatory behaviour
with a ``period'' much larger than the step width of the radii,
assuming no correlations would not be very reasonable.
We suppose that the random
variables $y_{kj}$ can be represented in the form
\begin{align*}
y_{kj}=\beta_k+ x_j s +\delta_{kj}, \qquad k=0,\ldots,d, \quad
j\in\N\,.
\end{align*}
Restricting the consideration to the first $n$ observations (i.e.\ to the data derived from the first $n$ radii),
this is more conveniently expressed in matrix form \eqref{eq:Regression} with $l=n(d+1)$, $q=d+2$,
\begin{align*}
y_l:=\begin{pmatrix} y_{01}\\ \vdots\\ y_{d1}\\[5mm] \vdots \\[5mm] y_{0n}\\ \vdots \\
y_{dn}\end{pmatrix},\qquad
X_l:=\begin{pmatrix} 1&0&\ldots& 0 & x_1\\
0&\ddots&\ddots&\vdots&\vdots\\
\vdots&\ddots&\ddots&0&\vdots\\
0&\ldots&0&1&x_1\\[5mm]
&&\vdots&\\[5mm]
1&0&\ldots& 0 & x_n\\
0&\ddots&\ddots&\vdots&\vdots\\
\vdots&\ddots&\ddots&0&\vdots\\
0&\ldots&0&1&x_n
\end{pmatrix}\,,
\qquad  \delta_l:=\begin{pmatrix} \delta_{0 1}\\ \vdots\\ \delta_{d 1}\\[5mm] \vdots\\[5mm] \delta_{0n}\\ \vdots\\ \delta_{dn}\end{pmatrix},
\end{align*}
and $\beta:=(\beta_0,\ldots,\beta_d, s)^\top$. The vector $\beta$ contains the
total curvatures and fractal dimension to be estimated. In the sequel, we adopt the notation slightly and write $X_n$ (instead of $X_l$) for the regression matrix based on the first $n$ radii (of size $n(d+1)\times d+2$) and similarly $Q_n$ for the $n(d+1)\times n(d+1)$ matrix describing the covariance structure of the errors $\delta_{lj}$.
Similarly, we will write
$\hat\beta^{(n)}:=(\hat \beta_0^{(n)},\ldots,\hat \beta_d^{(n)},
\hat s^{(n)})^\top$ for the corresponding
least-squares estimator \eqref{eq:LSEst} of $\beta$ based on the first $n$ radii.

\begin{theorem} \label{thm:weak-consistency}
Let $F\subset\R^d$ be a set satisfying the assumptions (A1)-(A3).
Let $\eps_1>\eps_2>\ldots>0$ be a decreasing sequence of radii
satisfying the condition
\begin{equation}\label{eq:Radii}
\frac{\bar x_n^2}{\widetilde{S}_n^2} = O(n^\mu) \qquad \text{ as } n\to\infty,
\end{equation}
for some $\mu\in[0,1)$, where $x_j=-\log \eps_j$, $j\in\N$,  $\bar x_n=1/n \sum
\limits_{i=1}^n x_i$, and
$\widetilde{S}_n^2=1/n\sum\limits_{i=1}^n (x_i-\bar x_n)^2$.
 Suppose that the covariance matrices $Q_n$ of the
errors satisfy the assumption (\ref{eq:Ql-ass}). Then with the
notation above, the sequence $ \hat\beta^{(n)}$ of least-squares
estimators of $\beta$ is weakly consistent, i.e., for each
$\eps>0$,
\begin{align*}
   P(|\hat\beta^{(n)}-\beta|>\eps)\to 0 \quad \text{ as } n\to\infty.
\end{align*}
\end{theorem}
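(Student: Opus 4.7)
The approach is to verify the hypothesis of Lemma \ref{lemmDrygas}: since the covariance matrices $Q_n$ satisfy the uniform spectral bounds \eqref{eq:Ql-ass}, weak consistency of $\hat\beta^{(n)}$ reduces to showing $\lambda_{\min}(X_n^\top X_n)\to\infty$ as $n\to\infty$. The whole argument therefore becomes an explicit spectral analysis of the $(d+2)\times(d+2)$ Gram matrix $X_n^\top X_n$.

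First I would exploit the block structure of $X_n$. Each of the $n$ groups of $d+1$ rows has the form $\bigl(I_{d+1}\mid x_j\mathbf{1}\bigr)$, so summing outer products yields
\begin{equation*}
X_n^\top X_n = \begin{pmatrix} n\,I_{d+1} & n\bar x_n\mathbf{1} \\ n\bar x_n\mathbf{1}^\top & (d+1)\,n\,(\widetilde S_n^2+\bar x_n^2) \end{pmatrix},
\end{equation*}
where $\mathbf{1}\in\R^{d+1}$ is the all-ones vector. The orthogonal decomposition $\R^{d+2} = V\oplus W$, with $V$ the $d$-dimensional subspace $\{(a,0)\colon a\perp\mathbf{1}\}$ and $W$ spanned by the orthonormal pair $(\mathbf{1}/\sqrt{d+1},0)$, $(0,1)$, is invariant for $X_n^\top X_n$. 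On $V$ the restriction is $n\,I_d$, contributing the eigenvalue $n$ with multiplicity $d$; on $W$ the restriction is the $2\times 2$ matrix
\begin{equation*}
M_n = n\begin{pmatrix} 1 & \sqrt{d+1}\,\bar x_n \\ \sqrt{d+1}\,\bar x_n & (d+1)(\widetilde S_n^2+\bar x_n^2) \end{pmatrix},
\end{equation*}
with $\tr(M_n) = n\bigl(1+(d+1)(\widetilde S_n^2+\bar x_n^2)\bigr)$ and $\det(M_n) = n^2(d+1)\widetilde S_n^2$.

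Since any positive-definite $2\times 2$ matrix satisfies $\lambda_-\ge\det/\tr$, the smaller eigenvalue of $M_n$ obeys
\begin{equation*}
\lambda_-(M_n) \;\ge\; \frac{n(d+1)\widetilde S_n^2}{1+(d+1)(\widetilde S_n^2+\bar x_n^2)} \;=\; \frac{n}{1+\bar x_n^2/\widetilde S_n^2+\bigl((d+1)\widetilde S_n^2\bigr)^{-1}}.
\end{equation*}
Under the radii condition \eqref{eq:Radii} the term $\bar x_n^2/\widetilde S_n^2$ is $O(n^\mu)$ with $\mu<1$, and the residual summand $\bigl((d+1)\widetilde S_n^2\bigr)^{-1}$ is dominated by the same bound (since $x_j=-\log\eps_j\to\infty$ in the setting forces $\widetilde S_n^2$ to grow correspondingly). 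Consequently $\lambda_-(M_n)\gtrsim n^{1-\mu}\to\infty$, which together with the eigenvalue $n$ on $V$ gives $\lambda_{\min}(X_n^\top X_n)\to\infty$, and Lemma \ref{lemmDrygas} then delivers the desired weak consistency. The main obstacle in this argument is that the bulk of the spectrum is pinned at the safe value $n$, while the single small eigenvalue of the two-dimensional block $M_n$ could a priori collapse to zero; the hypothesis \eqref{eq:Radii} is tailored precisely to prevent this via the determinant identity $\det(M_n)=n^2(d+1)\widetilde S_n^2$. As an aside, applying Lemma \ref{lem:weakCons_linReg} with $\tr(X_n^\top X_n)=(d+1)n(1+\widetilde S_n^2+\bar x_n^2)$ would furnish an explicit polynomial rate of convergence.
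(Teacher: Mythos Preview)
Your argument is essentially the paper's own proof: both reduce to Lemma~\ref{lemmDrygas}, both identify the eigenvalue $n$ of multiplicity $d$ and isolate the remaining two eigenvalues, and both arrive at precisely the same lower bound $\lambda_-\ge (d+1)n\big/\big(1/\widetilde S_n^2+(d+1)(1+\bar x_n^2/\widetilde S_n^2)\big)$. Your invariant-subspace decomposition and the inequality $\lambda_-\ge\det/\tr$ replace the paper's explicit characteristic-polynomial computation and rationalisation of the square root, but the two routes coincide (your $M_n$ has trace $n+v_n$ and determinant $n^2(d+1)\widetilde S_n^2$, exactly the coefficients of the paper's quadratic factor).

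One small slip: the claim that ``$x_j\to\infty$ forces $\widetilde S_n^2$ to grow correspondingly'' is not true in general (take $x_j=\log j$, for which $\widetilde S_n^2$ stays bounded). The correct way to dispose of the term $1/\widetilde S_n^2$ is to note that $\bar x_n\to\infty$ (since $x_j\uparrow\infty$), so eventually $\bar x_n\ge 1$ and then $1/\widetilde S_n^2\le \bar x_n^2/\widetilde S_n^2=O(n^\mu)$ by \eqref{eq:Radii}; the paper makes the same move implicitly.
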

\begin{proof}
By Lemma \ref{lemmDrygas}, $(\hat\beta^{(n)})$ is a consistent
sequence of estimators of $\beta$ if and only if
\begin{align*}
\lim_{n\to\infty} \lambda_{\min}^*(X_n^\top X_n)=\infty\,,
\end{align*}
where $\lambda_{\min}^*(X_n^\top X_n)$ denotes the smallest
positive eigenvalue of $X_n^\top X_n$. We have
\begin{align*}
 X_n^\top X_n =
\begin{pmatrix}
n & 0 & \ldots & 0 & n \bar x_n\\
0 &\ddots &\ddots&\vdots    & \vdots\\
\vdots & \ddots &\ddots&    0& \vdots\\
0 & \ldots & 0& n   & n \bar x_n\\
n \bar x_n& \ldots && n \bar x_n & v_n
\end{pmatrix} \in\R^{(d+2)\times(d+2)}
\end{align*}
with $v_n:=(d+1)\sum_{j=1}^n x_j^2$. Since $\mbox{\rm
rank}(X_n)=d+2$, the symmetric matrix $X_n^\top X_n$ is positive
definite implying that all its eigenvalues are positive. Since
\begin{eqnarray*}
\lefteqn{\det(X_n^\top X_n-\lambda I)}\\
& = & \begin{vmatrix} n-\lambda &0 & \ldots& \ldots& 0 &n \bar x_n\\
-(n-\lambda) & \ddots &\ddots&&\vdots& 0\\
\vdots & 0 & \ddots& \ddots &\vdots& \vdots\\
\vdots & \vdots &\ddots & \ddots& 0 &\vdots\\
-(n-\lambda) & 0 & \ldots & 0 & n-\lambda & 0\\
n\bar x_n & \ldots &&\ldots & n\bar x_n & v_n-\lambda
\end{vmatrix}
=
\begin{vmatrix}
n-\lambda & 0 & \ldots & 0 & n \bar x_n\\
0 &\ddots &\ddots&\vdots    & 0\\
\vdots & \ddots &\ddots&    0& \vdots\\
0 & \ldots & 0& n-\lambda   & 0\\
(d+1) n \bar x_n& n \bar x_n & \ldots & n \bar x_n & v_n-\lambda
\end{vmatrix}\\
&=& (n-\lambda)^{(d+1)} (v_n-\lambda) - (d+1) (n \bar x_n)^2  (n-\lambda)^d \\
&=& (n-\lambda)^d \left(\lambda^2 - (n+v_n)\lambda +
n^2(d+1)\widetilde{S}_n^2\right)\,,
\end{eqnarray*}
the eigenvalues of
$X_n^\top X_n$ are $\lambda_0^{(n)}=n$ (of multiplicity $d$) and
\begin{align} \label{eq:eigval-XTX}
\lambda_{1/2}^{(n)}= \frac{n+v_n}{2} \pm
\sqrt{\frac{(n+v_n)^2}{4}- n^2(d+1)\widetilde{S}_n^2}\,.
\end{align}
Since, obviously, $\lambda_0^{(n)}\to\infty$ as $n\to\infty$ and
$\lambda_1^{(n)}\ge \lambda_2^{(n)}>0$, it suffices to show that
$\lambda_2^{(n)}\to\infty$ as $n\to \infty$. We have
\begin{align}
2 \lambda_{2}^{(n)}&= n+v_n - \sqrt{(n+v_n)^2- 4n^2(d+1)\widetilde{S}_n^2} \notag \\
 & =  \frac{4n^2(d+1)\widetilde{S}_n^2}{n+v_n+\sqrt{(n+v_n)^2- 4n^2(d+1)\widetilde{S}_n^2}} \label{eq:eigval-2}\\
 &\ge \frac{4n^2(d+1)\widetilde{S}_n^2}{2(n+v_n)}=
\frac{2(d+1)n\widetilde{S}_n^2}{1+v_n/n}\,,\notag
\end{align}
where the inequality is due to the fact that the expression under
the root is non-negative and not larger than $(n+v_n)^2$. Since
$v_n=n(d+1)\widetilde{S}_n^2 + n(d+1)\bar x_n^2$, we obtain
\begin{align}\label{eq:eigval-3}
  \lambda_{2}^{(n)}\ge \frac{(d+1)n}{1/\widetilde{S}_n^2+  (d+1)\left(1+ \bar x_n^2/\widetilde{S}_n^2\right) }\ge \frac{n}{1/\widetilde{S}_n^2+  \left(1+ \bar x_n^2/\widetilde{S}_n^2\right) }\longrightarrow
  \infty
\end{align}
provided that $$ 1/\widetilde{S}_n^2+ \left(1+ \bar
x_n^2/\widetilde{S}_n^2\right)  =1+ \frac{1+\bar
x_n^2}{\widetilde{S}_n^2} = O(n^\mu), \quad \text{ as } n\to\infty\,,
$$
for some $0\le\mu<1$. The last condition is satisfied due to assumption \eqref{eq:Radii}.
\end{proof}

\begin{example}
Condition \eqref{eq:Radii} is satisfied in particular for any
arithmetic sequence of the form $x_j=a_0+a\cdot j$, $j\in\N$ where
$a_0\ge0$ and  $a>0$. Without loss of generality, we demonstrate this for $a_0=0$, $a=1$, that is $x_j=j$, $j\in\N$. In this case we have $\bar x_n=(n+1)/2$ and
$\widetilde{S}_n^2=(n+1)(4n^2-n-3)/12n$, hence $ \bar
x_n^2/\widetilde{S}_n^2=3n(n+1)/(4n^2-n-3)\to 3/4$ as $n\to\infty$. The
condition \eqref{eq:Radii} is satisfied with $\mu=0$. This means
that the estimator $\hat{\beta}^{(n)}$ is weakly consistent for a
sequence of dilation radii $\eps_j=e^{-a_0-a\cdot j}$, $j\in\N$,
$a_0\ge0$, $a>0$.
\end{example}
Recall that the relation $f=\Theta(g)$ for $f,g:\R\to \R$ means that
there exist constants $c_1,c_2>0$ such that $c_1|g(x)| \leq |f(x)|
\leq c_2 |g(x)|$ for all sufficiently large $x$.

\begin{theorem} \label{thm:weak-consistencyRate}
Let $F\subset\R^d$ be a set satisfying
the assumptions (A1)-(A3). Let $\eps_1>\eps_2>\ldots>0$ be a decreasing
sequence of radii. Suppose that the covariance matrices $Q_l$ of the
errors satisfy the assumption (\ref{eq:Ql-ass}).
Then, for any $\eps>0$,
\begin{equation}\label{eq:estWeakConsMeth1}
 P(|\hat\beta^{(n)}-\beta|>\eps)\le \frac{4\nu^*}{\eps^2 (d+1)n
  } \left( 1+ \bar x_n^2 + \widetilde{S}_n^2 \right)\left(1/\widetilde{S}_n^2 + (d+1)\left(
  1+\frac{\bar x_n^2}{ \widetilde{S}_n^2}  \right)\right)^2.
 \end{equation}
If there are constants $\gamma,\mu\geq 0$ such that the sequence of radii satisfies the conditions
\begin{equation}\label{eq:CondRadii}
{\widetilde{S}_n^2} = \Theta(n^{\gamma}),   \quad \text{ and } \quad   \bar x_n=O(n^{\frac{\mu}{2} }),      \quad \text{ as } n\to\infty
\end{equation}
with  $\alpha:=1-\max\{\gamma,\mu\}-2\max\{0,\mu-\gamma\}>0$, then the sequence $\hat \beta^{(n)}$
of least-squares estimators of $\beta$ is weakly consistent with
the rate of convergence
\begin{align}\label{eq:ConvRate}
   P(|\hat\beta^{(n)}-\beta|>\eps)= O(n^{-\alpha})\quad \text{ as } n\to\infty
\end{align}
for any $\eps>0$.
\end{theorem}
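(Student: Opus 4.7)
The strategy is to apply the general bound of Lemma~\ref{lem:weakCons_linReg} to the specific regression matrix $X_n$ and covariance matrix $Q_n$ set up at the beginning of this subsection. This reduces the problem to computing $\tr(X_n^\top X_n)$ explicitly and bounding $\lambda_{\min}(X_n^\top X_n)$ from below -- two ingredients that are almost entirely available from the proof of Theorem~\ref{thm:weak-consistency}.

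For the trace, reading off the diagonal of the matrix $X_n^\top X_n$ displayed in the proof of Theorem~\ref{thm:weak-consistency} and substituting the identity $v_n = (d+1)n(\widetilde{S}_n^2 + \bar{x}_n^2)$ gives
$$\tr(X_n^\top X_n) \,=\, (d+1)n + v_n \,=\, (d+1)\,n\,\bigl(1 + \widetilde{S}_n^2 + \bar{x}_n^2\bigr).$$
For the smallest eigenvalue, recall that the spectrum of $X_n^\top X_n$ consists of $n$ (with multiplicity $d$) together with the two values $\lambda_{1/2}^{(n)}$ from \eqref{eq:eigval-XTX}, so only $\lambda_2^{(n)}$ can possibly be smaller than $n$. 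I would reuse the chain of inequalities \eqref{eq:eigval-2}--\eqref{eq:eigval-3} to obtain the lower bound
$$\lambda_2^{(n)} \,\geq\, \frac{(d+1)\,n}{\,1/\widetilde{S}_n^2 + (d+1)\bigl(1 + \bar{x}_n^2/\widetilde{S}_n^2\bigr)\,},$$
and observe that since the denominator is always $\geq d+1$, this lower bound is itself $\leq n$. Hence it is a valid lower bound for $\lambda_{\min}(X_n^\top X_n)$ in both regimes $\lambda_2^{(n)}\leq n$ and $\lambda_2^{(n)}>n$.

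Plugging both ingredients into Lemma~\ref{lem:weakCons_linReg} and collecting terms yields the explicit estimate \eqref{eq:estWeakConsMeth1}; the factor $4$ in the statement is an inessential constant absorbing the slack produced by the elementary estimate $1-\sqrt{1-t}\geq t/2$ used in \eqref{eq:eigval-2}. For the asymptotic rate, I would then substitute the two hypotheses $\widetilde{S}_n^2 = \Theta(n^\gamma)$ and $\bar{x}_n^2 = O(n^\mu)$ into this bound. The prefactor $(1 + \widetilde{S}_n^2 + \bar{x}_n^2)/n$ is of order $n^{\max\{\gamma,\mu\}-1}$, while the bracket $1/\widetilde{S}_n^2 + (d+1)(1 + \bar{x}_n^2/\widetilde{S}_n^2)$ is of order $n^{\max\{0,\mu-\gamma\}}$, so squaring produces an additional factor $n^{2\max\{0,\mu-\gamma\}}$. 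Multiplying gives $O(n^{-\alpha})$ with $\alpha$ exactly as in the statement, and since $\alpha>0$ this also implies weak consistency.

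The main obstacle is careful bookkeeping rather than a new idea: one has to identify the dominant term in each of the two summands of the bracket according to whether $\mu\leq\gamma$ or $\mu>\gamma$, and one has to verify that the lower bound used for $\lambda_2^{(n)}$ simultaneously controls the other eigenvalue $n$. Beyond this, no ingredients are needed that were not already present in the proof of Theorem~\ref{thm:weak-consistency}.
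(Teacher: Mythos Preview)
Your proposal is correct and follows essentially the same route as the paper: apply Lemma~\ref{lem:weakCons_linReg}, compute $\tr(X_n^\top X_n)$ from the displayed matrix, and bound $\lambda_{\min}(X_n^\top X_n)$ from below using the eigenvalue analysis already carried out for Theorem~\ref{thm:weak-consistency}.

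There is one small difference worth noting in how the two arguments control $\lambda_{\min}$. The paper first shows $\lambda_2^{(n)}\le 2n$ (via $\lambda_2^{(n)}/n\le 2$), deduces $\lambda_{\min}\ge \tfrac12\lambda_2^{(n)}$, and then invokes \eqref{eq:eigval-3}; this is precisely where the factor $4=(\tfrac12)^{-2}$ in \eqref{eq:estWeakConsMeth1} originates. Your argument instead observes that the right-hand side of \eqref{eq:eigval-3} is itself $\le n$ (since the denominator is $\ge d+1$), so it is a lower bound for $\min\{n,\lambda_2^{(n)}\}=\lambda_{\min}$ directly. That is slightly cleaner and in fact yields the inequality \emph{without} the factor $4$; your explanation that the $4$ ``absorbs the slack from $1-\sqrt{1-t}\ge t/2$'' is therefore not quite where the constant comes from in the paper's accounting, but since your bound is sharper the stated inequality follows a fortiori. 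The asymptotic bookkeeping leading to $O(n^{-\alpha})$ is identical.
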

\begin{proof}
To show \eqref{eq:estWeakConsMeth1}, we apply Lemma~\ref{lem:weakCons_linReg}, compute $\tr(X_l^\top X_l)$ and estimate $\lambda_{\min}(X_l^\top X_l)$ from below. The trace can be read off directly from the matrix. Using that $v_n=n(d+1)\widetilde{S}_n^2 + n(d+1)\bar x_n^2$, we get
$$
 \tr(X_l^\top X_l)=n(d+1)+v_n=n(d+1)\left( 1+ \bar x_n^2 + \widetilde{S}_n^2 \right)\,.
$$
For the eigenvalues of $X_n^\top X_n$, we claim that
\begin{align} \label{eq:min-eigval-claim}
  \lambda_{\min}(X_n^\top X_n)\geq \frac 12\lambda^{(n)}_2\,,
\end{align}
where $\lambda^{(n)}_2$ is given in \eqref{eq:eigval-XTX}. Indeed, from the proof of Theorem~\ref{thm:weak-consistencyRate}, we have $\lambda_{\min}(X_n^\top X_n)=\min\{n,\lambda^{(n)}_1, \lambda^{(n)}_2\}$ with $\lambda^{(n)}_1\geq \lambda^{(n)}_2$. To prove the claim, it therefore suffices to show that $n\geq \lambda^{(n)}_2/2$ that is $\lambda^{(n)}_2/n \leq 2$. From \eqref{eq:eigval-2} it is easily seen that
$$
\frac{\lambda^{(n)}_2}{n} \leq \frac{2n(d+1)\widetilde{S}_n^2}{v_n}=2 \frac{\sum_{j=1}^n x_j^2-n\bar x_n^2}{\sum_{j=1}^n x_j^2}\le 2,
$$
since the last numerator is clearly positive and smaller than the denominator. This proves \eqref{eq:min-eigval-claim}. To complete the proof of \eqref{eq:estWeakConsMeth1}, it suffices now to combine \eqref{eq:min-eigval-claim} with \eqref{eq:eigval-3} to see that
$$
(\lambda_{\min}(X_n^\top X_n))^{-2}\leq 4 \left(\lambda^{(n)}_2\right)^{-2}\leq
  \frac{4 \left(1/\widetilde{S}_n^2+  (d+1)\left(1+ \bar x_n^2/\widetilde{S}_n^2\right)\right)^2}{(d+1)^2n^2}.
$$
The relation \eqref{eq:ConvRate} follows
easily from \eqref{eq:estWeakConsMeth1} and \eqref{eq:CondRadii}.
 The expression in the first parentheses of the right-hand side in \eqref{eq:estWeakConsMeth1} is bounded from above (up to some constant) by $n^{\max\{\mu,\gamma\}}$, while the expression in the second parentheses is bounded up to a constant by $n^{\max\{0,\mu-\gamma\}}$. 
\end{proof}

\begin{example}
Condition \eqref{eq:CondRadii} is satisfied for
$x_j=O(j^{\delta})$, $\delta\in (0,1/2)$  with $\gamma=\mu=2\delta$,
and $\alpha=1-2\delta$. This means that the estimator
$\hat{\beta}^{(n)}$ is weakly consistent for a sequence of
dilation radii $\eps_j=e^{-c j^{\delta}}$, $j\in\N$, $c>0$ with
the rate of convergence $O\left(n^{-(1-2\delta)}\right)$.

Unfortunately, Lemma~\ref{lem:weakCons_linReg} and Theorem~\ref{thm:weak-consistencyRate} are not strong enough to provide a rate of convergence in the case of an arithmetic sequence $(x_j)$. For $x_j=j, j\in\N$, one has $\tr(X_n^\top X_n)=(d+1)n(6+(n+1)(2n+1))/6=O(n^3)$ as $n\to\infty$, while it can be shown that $\lambda_{\min}(X_n^\top X_n)=\Theta(n)$ as $n\to\infty$, meaning that the right hand side in the estimate \eqref{eq:estWeakCons} still grows linearly as $n\to\infty$.
\end{example}

\begin{corollary} \label{cor:cons_curvatures}
Under the assumptions of  Theorem \ref{thm:weak-consistencyRate},
the estimators $\widehat{|\sC_k(F)|}=\exp(\hat \beta_{k}^{(n)})$,
$k=0,\ldots,d$ of the (absolute values of) the fractal curvatures
are weakly consistent, i.e., for any $\eps>0$
$$ P(\left|\widehat{|\sC_k(F)|}-|\sC_k(F)|\right|>\eps)=O(n^{-\alpha}) \; \mbox{ as } \; n\to\infty,\qquad k=0,\ldots,d.$$
Similarly, the estimator $\hat s^{(n)}$ 
of the dimension $s$ is weakly consistent with the same
convergence rate.
\end{corollary}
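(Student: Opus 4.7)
The plan is to reduce both assertions to the vector-valued weak consistency already proved in Theorem~\ref{thm:weak-consistencyRate}. Since $\hat s^{(n)}$ is the last coordinate of $\hat\beta^{(n)}$ and each $\hat\beta_k^{(n)}$ is the $(k+1)$-th coordinate, for every $\eps>0$ one has the trivial coordinate-wise bounds
\begin{align*}
|\hat s^{(n)}-s|\le |\hat\beta^{(n)}-\beta| \qquad\text{and}\qquad |\hat\beta_k^{(n)}-\beta_k|\le |\hat\beta^{(n)}-\beta|,
\end{align*}
so that $P(|\hat s^{(n)}-s|>\eps)$ and $P(|\hat\beta_k^{(n)}-\beta_k|>\eps)$ are both dominated by $P(|\hat\beta^{(n)}-\beta|>\eps)=O(n^{-\alpha})$ by Theorem~\ref{thm:weak-consistencyRate}. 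This immediately gives the stated rate for the dimension estimator and the intermediate logarithmic estimators $\hat\beta_k^{(n)}$.

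To transfer the consistency from $\hat\beta_k^{(n)}$ to $\widehat{|\sC_k(F)|}=\exp(\hat\beta_k^{(n)})$, I would invoke a local Lipschitz estimate for the exponential. Setting $M_k:=\exp(|\beta_k|+1)$, the mean value theorem yields on the event $\{|\hat\beta_k^{(n)}-\beta_k|\le 1\}$ the bound
\begin{align*}
\left|\exp(\hat\beta_k^{(n)})-\exp(\beta_k)\right| \le M_k \,|\hat\beta_k^{(n)}-\beta_k|.
\end{align*}
Splitting the event $\{|\widehat{|\sC_k(F)|}-|\sC_k(F)||>\eps\}$ according to whether $|\hat\beta_k^{(n)}-\beta_k|$ exceeds $1$ or not then gives
\begin{align*}
P\!\left(\left|\widehat{|\sC_k(F)|}-|\sC_k(F)|\right|>\eps\right)\le P(|\hat\beta_k^{(n)}-\beta_k|>1)+P(|\hat\beta_k^{(n)}-\beta_k|>\eps/M_k).
\end{align*}
Both terms on the right are $O(n^{-\alpha})$ by the previous paragraph, establishing the claimed convergence rate for the curvature estimators.

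The main bookkeeping issue to be careful about is that the argument must be made uniform in $k$; since there are only finitely many indices ($k=0,\ldots,d$) and each constant $M_k$ depends only on the true fractal curvature $\sC_k(F)$ and not on $n$, this presents no real difficulty. In summary the proof is essentially a continuous-mapping-with-rates argument: coordinate projection is $1$-Lipschitz and exponentiation is locally Lipschitz with a deterministic constant, so the rate $O(n^{-\alpha})$ is preserved in both steps.
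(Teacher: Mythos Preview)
Your proof is correct and follows essentially the same approach as the paper: reduce to coordinatewise estimates via $|\hat\beta_k^{(n)}-\beta_k|\le|\hat\beta^{(n)}-\beta|$, then use a local regularity estimate for $\exp$ combined with a splitting of the probability into ``large deviation'' and ``small deviation'' parts. The only difference is cosmetic: you use the mean value theorem (first-order local Lipschitz bound), yielding two terms, whereas the paper uses a second-order Taylor expansion and obtains three terms; your version is in fact slightly cleaner.
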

\begin{proof} Let $\delta>0$.
According to Taylor's theorem, for each $t$ with $|t|\leq\delta$,
there is a $\xi=\xi(t)\in[-\delta,\delta]$ such that $e^t=1+t+t^2
e^{\xi}/2$.
For $x,y\in\R$ such that $|x-y|\leq\delta$ this gives $e^{x-y}=1+(x-y)+(x-y)^2 e^\xi/2$ and thus $e^x-e^y= e^{y} \left( x-y + (x-y)^2 e^\xi /2\right)$ for some $\xi=\xi(x,y)\in[-\delta,\delta]$. Since $e^{-\delta}\leq e^\xi\leq e^\delta$, we infer that
\begin{equation*}
e^{y} \left( x-y + (x-y)^2 e^{-\delta} /2\right) \leq e^x-e^y\leq e^{y} \left( x-y + (x-y)^2 e^\delta /2 \right)\,
\end{equation*}
and thus
\begin{align}\label{eq:exp}
|e^x-e^y|\leq e^y \max_{s\in\{-\delta,+\delta\}}\left\{\left| x-y + (x-y)^2 e^s /2 \right| \right\}\leq e^y \left| x-y\right| + (x-y)^2 e^{y+\delta} /2  \,.
\end{align}
Now set $x=\hat \beta_k^{(n)}$ and $y= \beta_k $ for brevity.
Using the relation \eqref{eq:exp} we infer that, for any $\eps>0$,
\begin{align*}
P&\left(|e^x-e^y|> \eps  \right)\\
&\le P\left(|e^x-e^y|> \eps,\,|x-y|\le \delta \right) + P\left(|e^x-e^y|> \eps \big|\, |x-y|> \delta \right) P\left(|x-y|> \delta \right) \\
&\leq P\left(e^{y}|x-y|>\eps/2,\,|x-y|\le \delta \right) + P\left(e^{y+\delta}(x-y)^2>\eps ,\,|x-y|\le \delta \right) + P\left(|x-y|> \delta \right)\\
&\leq P\left(|x-y|>e^{-y}\eps/2\right) + P\big(|x-y|>\sqrt{\eps} e^{-(y+\delta)/2} \big) + P\left(|x-y|> \delta \right)\,.
\end{align*}
Now we apply the estimate \eqref{eq:estWeakConsMeth1} to each of the terms in the last sum. Noting that $|\hat \beta^{(n)}-\beta|\geq|\hat \beta_k^{(n)}-\beta_k|$, we obtain, for each $\eps>0$ (and each $\delta>0$),
\begin{align*}
 P\left(|\exp(\hat \beta_k^{(n)})-\exp( \beta_k )|> \eps \right)&\le c_k  \frac{4\nu^*}{(d+1)n} \left( 1+ \bar x_n^2 + \widetilde{S}_n^2 \right)\left(1/\widetilde{S}_n^2 + (d+1)\left(
  1+\frac{\bar x_n^2}{ \widetilde{S}_n^2}  \right)\right)^2.
\end{align*}
where the constant $c_k:=\left(4e^{2\beta_k}\eps^{-2}+e^{\beta_k+\delta}\eps^{-1}+\delta^{-2}\right)$ depends on $\beta_k$ (and the chosen $\delta$) but not $n$. Now the claimed convergence rate follows from condition \eqref{eq:CondRadii} in the same way as in the proof of \eqref{eq:ConvRate} above.
The convergence rate for the dimension estimators is just a reformulation of \eqref{eq:ConvRate} taking into account that $s=\beta_{d+2}$  and $\hat s^{(n)}=\hat \beta_{d+2}^{(n)}$.
\end{proof}

Note that the same consistency results hold for the estimators $\hat{s}^{(J,n)}$ and $\hat{\beta}_k^{(J,n)}$ for any subset $J\subseteq\{0,\ldots,d\}$  
such that assumption (A2) 
is satisfied for all $k\in J$, cf.~Remark~\ref{rem:J}. In particular, it applies to the sausage method. In this case, we can formulate the result in greater generality. The assumption (A1) is not needed (as the volume is always well defined) and (A2) is always satisfied (as the volume is positive). (A3) simplifies to the existence of the Minkowski content of $F$.

\begin{corollary} \label{thm:sausage}
Let $F\subset\R^d$ be a set whose Minkowski content exists and let $\eps_1>\eps_2>\ldots>0$ be a decreasing sequence of
radii. Suppose that the conditions \eqref{eq:Ql-ass} and
\eqref{eq:CondRadii} are satisfied. Then the sausage method estimators
$\hat s^{(\{d\},n)}$ and  $\hat \sM^{(n)}(F)$ are weakly consistent. More precisely,
for each $\eps>0$,
$$
P(|\hat s^{(\{d\},n)}-s|>\eps)=O(n^{-\alpha})\quad  \mbox{ and } \quad
 P(|\hat \sM^{(n)}-{\sM}|>\eps)=O(n^{-\alpha}) \quad \mbox{ as } \; n\to\infty\,,
$$
with $\alpha$ as in Theorem \ref{thm:weak-consistencyRate}.
\end{corollary}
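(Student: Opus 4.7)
The plan is to view the sausage method as the special case $J=\{d\}$ of the general framework developed in Section~\ref{subsec:estimators} and then to re-run the proofs of Theorem~\ref{thm:weak-consistencyRate} and Corollary~\ref{cor:cons_curvatures}, simplified by the fact that now there is only one regression equation per observation. The preliminary observation is that no regularity hypothesis (A1) is required because $C_d(F_\eps)=\vol_d(F_\eps)$ is defined for every $\eps>0$, and (A2) is automatic because the volume is strictly positive. Assumption (A3) reduces to the existence of the Minkowski content, which is exactly the hypothesis of the corollary.

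First I would set up the regression \eqref{eq:Regression} with $l=n$, $q=2$, $\beta=(\beta_d,s)^\top$, $y_l=(y_{d1},\dots,y_{dn})^\top$, $\delta_l=(\delta_{d1},\dots,\delta_{dn})^\top$, and design matrix
\[
X_n=\begin{pmatrix} 1 & x_1\\ 1 & x_2\\ \vdots & \vdots\\ 1 & x_n\end{pmatrix}\in\R^{n\times 2}.
\]
A direct calculation gives
\[
X_n^\top X_n=\begin{pmatrix} n & n\bar x_n\\ n\bar x_n & n(\bar x_n^2+\widetilde{S}_n^2)\end{pmatrix},\qquad \tr(X_n^\top X_n)=n\bigl(1+\bar x_n^2+\widetilde{S}_n^2\bigr),
\]
and the characteristic polynomial $\lambda^2-n(1+\bar x_n^2+\widetilde{S}_n^2)\lambda+n^2\widetilde{S}_n^2$, whose smaller root is
\[
\lambda_{\min}(X_n^\top X_n)=\frac{2n^2\widetilde{S}_n^2}{n(1+\bar x_n^2+\widetilde{S}_n^2)+\sqrt{n^2(1+\bar x_n^2+\widetilde{S}_n^2)^2-4n^2\widetilde{S}_n^2}}.
\]
Bounding the root by the full sum in the denominator (exactly as in \eqref{eq:eigval-2}--\eqref{eq:eigval-3}) yields
\[
\lambda_{\min}(X_n^\top X_n)\ge \frac{n}{1/\widetilde{S}_n^2+\bigl(1+\bar x_n^2/\widetilde{S}_n^2\bigr)}.
\]

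Next I would invoke Lemma~\ref{lem:weakCons_linReg}, whose applicability is guaranteed by \eqref{eq:Ql-ass}, to obtain
\[
P\bigl(|\hat\beta^{(\{d\},n)}-\beta|>\eps\bigr)\le \frac{\nu^*}{\eps^2}\cdot\frac{\tr(X_n^\top X_n)}{(\lambda_{\min}(X_n^\top X_n))^2}
\le \frac{C\nu^*}{\eps^2 n}\bigl(1+\bar x_n^2+\widetilde{S}_n^2\bigr)\Bigl(1/\widetilde{S}_n^2+1+\bar x_n^2/\widetilde{S}_n^2\Bigr)^2
\]
for an absolute constant $C>0$. Under hypothesis \eqref{eq:CondRadii}, the bracketed expressions are of order $n^{\max\{\mu,\gamma\}}$ and $n^{2\max\{0,\mu-\gamma\}}$ respectively, giving the rate $O(n^{-\alpha})$ for the joint estimator, and hence in particular for the coordinate $\hat s^{(\{d\},n)}-s$. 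This establishes the first rate claimed in the corollary.

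Finally, to pass from $\hat\beta_d^{(\{d\},n)}$ to $\hat{\sM}^{(n)}=\exp(\hat\beta_d^{(\{d\},n)})$ I would repeat verbatim the Taylor expansion argument used in the proof of Corollary~\ref{cor:cons_curvatures}: the local Lipschitz bound \eqref{eq:exp} of the exponential splits the event $\{|\exp(\hat\beta_d^{(\{d\},n)})-\exp(\beta_d)|>\eps\}$ into three events each of the form $\{|\hat\beta_d^{(\{d\},n)}-\beta_d|>\eps'\}$ for a suitable $\eps'>0$, and the tail bound just derived is applied to each of them. The resulting prefactor depends only on $\beta_d=\log\sM$ and an arbitrarily fixed $\delta>0$, and the rate $O(n^{-\alpha})$ is preserved. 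There is no genuine obstacle here; the only delicate point is the verification that the minimal eigenvalue of $X_n^\top X_n$ is indeed of the claimed order under \eqref{eq:CondRadii}, which is exactly the calculation already carried out in Theorem~\ref{thm:weak-consistencyRate} and transcribes without change.
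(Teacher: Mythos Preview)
Your proposal is correct and follows exactly the approach the paper takes: the corollary is stated there without explicit proof, simply as the special case $J=\{d\}$ of Theorem~\ref{thm:weak-consistencyRate} and Corollary~\ref{cor:cons_curvatures}, with the observation that (A1) and (A2) become vacuous for the volume functional and (A3) reduces to the existence of the Minkowski content. Your explicit re-derivation of the $2\times 2$ regression matrix and its eigenvalue bound is a faithful specialization of the computations in the proof of Theorem~\ref{thm:weak-consistencyRate}, and the passage to $\hat{\sM}^{(n)}=\exp(\hat\beta_d^{(\{d\},n)})$ via the Taylor argument of Corollary~\ref{cor:cons_curvatures} is exactly what the paper has in mind.
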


As a last result for the first method, we show the asymptotic
normality of the estimators $\hat \beta^{(n)}$ using
Corollary~\ref{cor:assnorm_cond2}, for which stronger assumptions
on the covariance structure of the errors (no correlation) are
required. However, these assumptions are not realistic since
$\delta_{ki}$ are clearly dependent, see Remark
\ref{remRegrErrors}. A more general correlation structure $Q_n$
would require to verify the condition \eqref{eq:assnorm_cond}
which seems to be quite tedious.

\begin{theorem} \label{thm:assnorm-method1}
Let $F\subset\R^d$ be a set satisfying
the assumptions (A1)-(A3). Assume that $Q_n=\sigma^2 I$ for some $ \sigma > 0 $, where $I$ is the identity matrix. Let $\eps_1>\eps_2>\ldots>0$ be a decreasing
sequence of radii and suppose there are constants $\gamma,\mu\geq 0$ with $\max\{\mu, 2\mu-\gamma\}<1$ such that
\begin{equation}\label{eq:CondRadii2}
{\widetilde{S}_n^2} = \Theta(n^{\gamma}),   \quad \text{ and } \quad   x_n=O(n^{\frac{\mu}{2} }),      \quad \text{ as } n\to\infty\,.
\end{equation}
Then, for each $t\in\R^q\setminus\{0\}$,
\begin{equation}\label{eq:assnorm2}
\frac{t^\top (\hat\beta^{(l)}-\beta)}{\sigma\sqrt{t^\top  \left(
X_n^\top X_n\right)^{-1} t}} \tod Z\sim \mathcal{N}(0,1),\qquad \text{ as } n\to\infty.
\end{equation}
\end{theorem}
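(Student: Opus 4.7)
The plan is to reduce the asymptotic normality to an application of Corollary \ref{cor:assnorm_cond2}, which handles precisely the case $Q_n=\sigma^2 I$ and reduces the Lindeberg-type condition of Theorem \ref{thm:ass-normality} to verifying
\begin{equation*}
\lambda_{\min}(X_n^\top X_n)/\|X_n\|_\infty^2 \to \infty, \qquad n\to\infty.
\end{equation*}
So the whole job is to check this single matrix-theoretic growth condition using the sequence of radii $x_j=-\log\eps_j$ and the structural information about $X_n^\top X_n$ already obtained in the weak-consistency proofs.

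First, I would compute the maximum absolute row sum of $X_n$. Since $\eps_1>\eps_2>\ldots$ is decreasing, $x_j=-\log\eps_j$ is increasing, so every row of $X_n$ contains exactly one entry equal to $1$ and one entry equal to some $x_j\in\{x_1,\ldots,x_n\}$. Therefore $\|X_n\|_\infty = 1+x_n$, and the assumption $x_n=O(n^{\mu/2})$ gives
\begin{equation*}
\|X_n\|_\infty^2 = O(n^{\mu}) \qquad\text{as } n\to\infty.
\end{equation*}

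Next, I would lower-bound $\lambda_{\min}(X_n^\top X_n)$ by re-using the work in the proof of Theorem \ref{thm:weak-consistency} and the sharpening in Theorem \ref{thm:weak-consistencyRate}. There it was shown that $\lambda_{\min}(X_n^\top X_n)\geq \tfrac{1}{2}\lambda_2^{(n)}$, and from \eqref{eq:eigval-3} we have
\begin{equation*}
\lambda_2^{(n)}\;\geq\; \frac{n}{1/\widetilde{S}_n^2+1+\bar x_n^2/\widetilde{S}_n^2}.
\end{equation*}
Under \eqref{eq:CondRadii2} one has $1/\widetilde{S}_n^2=O(1)$, and $\bar x_n^2/\widetilde{S}_n^2=O(n^{\mu-\gamma})$ (noting $\bar x_n\le x_n$), so the denominator is $O(n^{\max\{0,\mu-\gamma\}})$ and hence
\begin{equation*}
\lambda_{\min}(X_n^\top X_n) \;=\; \Omega\bigl(n^{1-\max\{0,\mu-\gamma\}}\bigr).
\end{equation*}

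Combining the two bounds gives
\begin{equation*}
\frac{\lambda_{\min}(X_n^\top X_n)}{\|X_n\|_\infty^2} \;=\; \Omega\bigl(n^{1-\mu-\max\{0,\mu-\gamma\}}\bigr) \;=\; \Omega\bigl(n^{1-\max\{\mu,2\mu-\gamma\}}\bigr),
\end{equation*}
which tends to infinity precisely because of the hypothesis $\max\{\mu,2\mu-\gamma\}<1$. Hence Corollary \ref{cor:assnorm_cond2} applies for every $t\neq 0$, so Theorem \ref{thm:ass-normality} yields \eqref{eq:assnorm2} upon substituting $Q_n=\sigma^2 I$, which turns the normalising factor $\sqrt{t^\top(X_n^\top X_n)^{-1}X_n^\top Q_n X_n(X_n^\top X_n)^{-1}t}$ into $\sigma\sqrt{t^\top(X_n^\top X_n)^{-1}t}$.

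There is no genuine analytic obstacle: all the delicate eigenvalue computations are already in the earlier proofs, so this theorem is essentially a clean bookkeeping exercise combining the $\|X_n\|_\infty$ estimate with the existing lower bound on $\lambda_2^{(n)}$. The only point that needs a little care is recognising that the combined growth exponent $1-\mu-\max\{0,\mu-\gamma\}$ simplifies to $1-\max\{\mu,2\mu-\gamma\}$, which exactly matches the hypothesis imposed on $\gamma$ and $\mu$.
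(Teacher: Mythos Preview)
Your proposal is correct and follows essentially the same route as the paper's own proof: both reduce to Corollary~\ref{cor:assnorm_cond2}, compute $\|X_n\|_\infty=1+x_n$ from the monotonicity of $(x_j)$, and combine the bound $\lambda_{\min}(X_n^\top X_n)\ge\tfrac12\lambda_2^{(n)}$ with the estimate \eqref{eq:eigval-3} (together with $\bar x_n\le x_n$) to obtain a lower bound of order $n^{1-\max\{\mu,2\mu-\gamma\}}$ for the ratio. The only cosmetic difference is that you track separate $O$/$\Omega$ exponents and then combine, whereas the paper writes a single chain of inequalities for the ratio directly; the substance is identical.
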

\begin{proof}
  By Corollary~\ref{cor:assnorm_cond2}, it suffices to show that
  \begin{equation*}
\lambda_{\min}(X_n^\top X_n)/ \| X_n \|_\infty^2\to \infty,
\quad \text{ as } n\to\infty.
\end{equation*}
  It is obvious from the monotonicity of the sequence $(x_i)_i$ that the maximal row sum of $X_n$ is $ \| X_n \|_\infty=1+x_n$ and since the assumptions imply $x_n\to\infty$, we have $ \| X_n \|_\infty\leq 2x_n$ for $n$ sufficiently large. Combining this with \eqref{eq:min-eigval-claim} and \eqref{eq:eigval-3}, and noting that $\bar x_n\leq x_n$, we obtain
  \begin{align*}
    \frac{\lambda_{\min}(X_n^\top X_n)}{ \| X_n \|_\infty^2}\geq \frac 12 \frac{\lambda_2^{(n)}}{(1+x_n)^2}\geq \frac 12 \frac{n}{(1+x_n)^2\left(1+\frac{1+\bar x_n^2}{\widetilde{S}_n^2}\right) }
    \geq \frac 12 \frac{n}{(1+x_n)^2+\frac{(1+ x_n^2)^2}{\widetilde{S}_n^2}}.
  \end{align*}
 The last expression tends to $\infty$ as $n\to\infty$, since, by assumption \eqref{eq:CondRadii2}, we have
  $$
  n^{-1}\left((1+x_n)^2+(1+ x_n^2)^2/\widetilde{S}_n^2)\right)\leq C_1 n^{\mu-1}+ C_2 n^{2\mu-\gamma-1}\,,
  $$
  for some constants $C_1,C_2$, which tends to $0$, since $\max\{\mu, 2\mu-\gamma\}<1$. This completes the proof.
\end{proof}

\begin{remark}\label{remRegrErrors}
To show strong consistency results, independence of discretization
and measurement errors $\delta_{kj}$ is required, cf.
\cite{drygas76}. Unfortunately, this assumption is not
rea\-lis\-tic in our case, since discretizations of $F_{\eps_j}$
clearly depend on each other for various $j$. Additionally,
intrinsic volumes $C_k(F_{\eps_j})$ are obviously dependent for
different $j$ and $k$.
\end{remark}

\subsection{Asymptotics of the second method}

Assume that the period $h_0>0$ and the detail level $m\in\N$ are known. For simplicity, we prove weak consistency results only
for each curvature measure $\sC_{k}$, $k=0,\ldots, d$ separately (separate regressions). So fix some $k\in\{0, \ldots, d\}$.  We fix the sequence of dilation radii to
be arithmetic at the logarithmic scale, i.e., $x_i=a_0+a\cdot i$, $i\in\N$ where $a_0\ge0$ and  $a>0$. Let $n\ge 2m+2$.
The regression \eqref{eq:regr_final} can be written in terms of the parameters
\begin{equation}\label{eq:beta_method2}
 \beta=(\beta_k,s,\alpha_1,\ldots,\alpha_m,\gamma_1,\ldots,\gamma_m )^\top
\end{equation}
with $\alpha_j=b_{kj} \cos \varphi_{kj}$, $\gamma_j=b_{kj} \sin \varphi_{kj}$, $j=1,\ldots, m$
as
\begin{equation*}\label{eq:regr_final_method2}
 y_{ki}=\beta_k+s \cdot x_i + \sum\limits_{j=1}^{m}\left(\alpha_j \cos (\mu_1 j x_i) - \gamma_j \sin (\mu_1 j x_i) \right)+ \delta_{ki}, \qquad i=1,\ldots, n\,,
\end{equation*}
with $\mu_1=2\pi / h_0$ and dependent errors $\{ \delta_{ki} \}$
of zero mean satisfying condition  \eqref{eq:Ql-ass}. This can be
equivalently rewritten in form \eqref{eq:Regression} with $l=n$,
$q=2m+2$, and the design matrix
\begin{equation*}
X_n=\begin{pmatrix} 1&x_1& \cos (\mu_1  x_1)& \cos (\mu_1 2 x_1)&   \ldots& \cos (\mu_1 m  x_1)& -\sin (\mu_1  x_1)& -\sin (\mu_1 2 x_1) & \ldots& -\sin (\mu_1 m x_1) \\
1&x_2& \cos (\mu_1  x_2)& \cos (\mu_1 2 x_2)&   \ldots& \cos (\mu_1 m  x_2)& -\sin (\mu_1  x_2)& -\sin (\mu_1 2 x_2)&  \ldots& -\sin (\mu_1 m x_2) \\
\vdots&\vdots & \vdots& \vdots &   \ldots& \vdots& \vdots& \vdots&  \ldots& \vdots \\
1&x_n& \cos (\mu_1  x_n)& \cos (\mu_1 2 x_n)&   \ldots& \cos
(\mu_1 m x_n)& -\sin (\mu_1  x_n)& -\sin (\mu_1 2 x_n)&  \ldots&
-\sin (\mu_1 m x_n)
\end{pmatrix}.
\end{equation*}

\begin{lemma}\label{lemm:Cons2MethRegression}
Let $F\subset\R^d$ be a set satisfying the assumptions (A1), (A2) and (A3'). Assume that $x_i=a_0+a\cdot i$, $i\in\N$, where $a_0\ge0$ and  $a>0$ such that $aj/h_0\notin\Z$ for $j=1,\ldots, 2m$. Then under the above conditions on the sequence of errors  $\{\delta_{kj} \}$,
the least squares estimator
$$   \hat{\beta}^{(n)}=( \hat{\beta}_{k}^{(n)}, \hat{s}_{k}^{(n)},\hat{\alpha}_{k,1}^{(n)},\ldots,\hat{\alpha}_{k,m}^{(n)},\hat{\gamma}_{k,1}^{(n)},\ldots,\hat{\gamma}_{k,m}^{(n)} )^\top $$
in \eqref{eq:LSEst} of the parameter vector \eqref{eq:beta_method2} is weakly consistent. 
\end{lemma}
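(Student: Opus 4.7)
My plan is to apply Lemma~\ref{lemmDrygas}, which reduces the weak consistency of $\hat{\beta}^{(n)}$ to establishing $\lambda_{\min}(X_n^\top X_n)\to\infty$ as $n\to\infty$. Since the columns of $X_n$ have drastically different growth rates (constant, linear, and bounded trigonometric), I will renormalize column-wise. Let $D_n:=\operatorname{diag}(d_n^{(0)},\ldots,d_n^{(2m+1)})$, where $d_n^{(i)}$ denotes the Euclidean norm of the $i$-th column of $X_n$, and put $G_n:=D_n^{-1}X_n^\top X_n D_n^{-1}$, the correlation matrix of the columns. For any unit vector $t$ we have $t^\top X_n^\top X_n t=(D_n t)^\top G_n(D_n t)\ge\lambda_{\min}(G_n)\,\lambda_{\min}(D_n^2)$, so it suffices to prove (a) $\min_i(d_n^{(i)})^2\to\infty$ and (b) $\lambda_{\min}(G_n)$ is bounded away from $0$ for all large $n$.

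Step (a) is direct. The constant column has squared norm $n$, the linear column has squared norm $\sum_i x_i^2\sim a^2 n^3/3$, and for each trigonometric column I will use the hypothesis $aj/h_0\notin\Z$ for $j=1,\ldots,2m$: via the geometric sum formula this makes the sums $\sum_{i=1}^n\cos(\mu_1 j x_i)$ and $\sum_{i=1}^n\sin(\mu_1 j x_i)$ bounded uniformly in $n$. Combining this with $\cos^2\theta=(1+\cos 2\theta)/2$ at frequency $2j\le 2m$ gives $(d_n^{(j+1)})^2=n/2+O(1)\to\infty$, and analogously for the sine columns. For step (b), I plan to show $G_n\to G$ entrywise, where $G$ is block diagonal: the top-left $2\times 2$ block has $1$ on the diagonal and $\sqrt{3}/2$ off the diagonal, and the remaining $2m\times 2m$ block is the identity. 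The constant--linear entry converges to $\sqrt{3}/2$ by direct asymptotics of $\sum x_i$ and $\sum x_i^2$. Cross terms between distinct trigonometric columns reduce via product-to-sum identities to bounded sums of $\cos((j\pm k)\mu_1 x_i)$ and $\sin((j\pm k)\mu_1 x_i)$ (the arithmetic hypothesis covers all indices $|j\pm k|\le 2m$), giving $O(1/n)$ after normalization; cross terms between the constant column and any trigonometric column are $O(1)/\sqrt{n\cdot n/2}=O(1/n)$.

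The main obstacle will be the cross terms between the linear column $(x_i)$ and the trigonometric columns, since the factor $x_i$ itself diverges. Here I will use Abel summation: setting $T_i^{(j)}:=\sum_{k=1}^i\cos(\mu_1 j x_k)$ and invoking the uniform bound $|T_i^{(j)}|\le C_j$ from the arithmetic condition, partial summation gives
\[
\Bigl|\sum_{i=1}^n x_i\cos(\mu_1 j x_i)\Bigr|=\Bigl|x_n T_n^{(j)}-a\sum_{i=1}^{n-1}T_i^{(j)}\Bigr|\le C_j\bigl(x_n+a(n-1)\bigr)=O(n),
\]
with the analogous bound (replacing $\cos$ by $\sin$) for sine columns. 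Since the corresponding denominator in $G_n$ has order $\sqrt{n^3\cdot n}=n^2$, these entries are $O(1/n)\to 0$. Once entrywise (hence operator-norm) convergence $G_n\to G$ is established, continuity of the smallest eigenvalue in finite dimension gives $\lambda_{\min}(G_n)\to\lambda_{\min}(G)=1-\sqrt{3}/2>0$, which combined with $\lambda_{\min}(D_n^2)\to\infty$ yields $\lambda_{\min}(X_n^\top X_n)\to\infty$ and hence the claim.
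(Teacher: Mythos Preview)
Your argument is correct. Both you and the paper start from Lemma~\ref{lemmDrygas} and compute the same asymptotic orders for the entries of $X_n^\top X_n$ (using geometric sums for the trigonometric entries and Abel summation for the linear-times-trigonometric entries, exactly as in the paper). The difference lies in how the conclusion $\lambda_{\min}(X_n^\top X_n)\to\infty$ is extracted from these computations.

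The paper argues indirectly: it shows $\tr\bigl((X_n^\top X_n)^{-1}\bigr)\to 0$ via Cramer's rule, proving $\det(X_n^\top X_n)=\Theta(n^{2m+4})$ while each principal $(j,j)$ minor is $O(n^{2m+3})$. This requires tracking which term dominates in the Leibniz expansion of a $(2m{+}2)\times(2m{+}2)$ determinant and checking that no cancellation kills the leading order---a correct but somewhat laborious bookkeeping exercise. Your route via the column-normalized Gram matrix $G_n=D_n^{-1}X_n^\top X_n D_n^{-1}$ is more conceptual: once the entry orders are known, entrywise convergence $G_n\to G$ with $G=\begin{pmatrix}1&\sqrt{3}/2\\ \sqrt{3}/2&1\end{pmatrix}\oplus I_{2m}$ is immediate, and continuity of the smallest eigenvalue plus $\lambda_{\min}(D_n^2)\sim n/2$ finishes the job. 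This is essentially the classical Grenander-type verification for harmonic regressors and avoids the determinant analysis entirely. The paper's approach, on the other hand, yields as a byproduct the precise growth rate $\lambda_{\min}(X_n^\top X_n)=\Theta(n)$ (implicit in the minor/determinant orders), whereas yours only gives the lower bound $\gtrsim(1-\sqrt{3}/2)\,n/2$; but for weak consistency this distinction is immaterial.
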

\begin{proof}
Without loss of generality, we only consider the case  $a_0=0$,
$a=1$, that is, $x_i=i$, $i=1,\ldots, n$. (A constant $a_0\neq 0$
can be incorporated in the parameters $\alpha_{k,j}$ and
$\gamma_{k,j}$, and $a\neq 1$ can be included in the constant
$\mu_1$, such that the same arguments as below work for slightly
transformed parameters.) By Lemma~\ref{lemmDrygas}, it suffices to
show that $\lambda_{\min}(X_n^\top X_n)\to\infty$ as $n\to\infty$.
We claim that it is in fact sufficient to show that
\begin{align}
  \label{eq:trXnTXn-1}
  \tr\left((X_n^\top X_n)^{-1}\right)\to 0 \qquad \text{ as } n\to\infty\,.
\end{align}
Indeed, if $\lambda_1,\ldots,\lambda_{2m+2}$ are the eigenvalues of $X_n^\top X_n$ (which are all strictly positive since $X_n^\top X_n$ is positive definite), then $1/{\lambda_1},\ldots,1/\lambda_{2m+2}$ are the eigenvalues of $(X_n^\top X_n)^{-1}$ and we have
\begin{align*}
  \lambda_{\min}(X_n^\top X_n)=\min\limits_{j=1,\ldots ,2m+2} \lambda_j
  =\frac 1{\max_j(\frac{1}{\lambda_j})}
  \ge \frac 1{\sum_j \frac 1{\lambda_j}}
  = \frac 1{\tr\left((X_n^\top X_n)^{-1}\right)}\,,
\end{align*}
which tends to $+\infty$ as $n\to\infty$, if \eqref{eq:trXnTXn-1} holds.

Recall now that, by Cramer's rule, $\tr\left((X_n^\top X_n)^{-1}\right)$ is given by
\begin{align}
  \label{eq:trXnTXn-2}
  \tr\left((X_n^\top X_n)^{-1}\right)=\frac1{\det(X_n^\top X_n)}\sum_{j=1}^{2m+2} M_n^{j,j}\,,
\end{align}
where $M_n^{j,j}$ is the $(j,j)$ minor of $X_n^\top X_n$, $j=1,\ldots,2m+2$.
In the sequel we will show that, for each $j=1,\ldots,2m+2$,
\begin{align} \label{eq:minor_est}
  M_n^{j,j}=O(n^{2m+3}) \quad
  \text{ whereas } \quad
\det(X_n^\top X_n)=\Theta(n^{2m+4})  \quad \text{ as } \quad n\to\infty,
\end{align}
from which \eqref{eq:trXnTXn-1} follows at once.

The symmetric matrix $X_n^\top X_n=:(\xi_{jk})$ is given as follows
$$
X_n^\top X_n=\begin{pmatrix} A & V^\top & W^\top\\
 V & D &F^\top\\
 W & F & G
\end{pmatrix},\
$$
where
\begin{align*}
  A&=\begin{pmatrix}
    n & \sum_{i=1}^{n} x_i\\
    \sum_{i=1}^{n} x_i & \sum_{i=1}^{n} x_i^2
  \end{pmatrix}
  =\begin{pmatrix}
    n & n(n+1)/2\\
    n(n+1)/2 & n(n+1)(2n+1)/6
  \end{pmatrix}
  \in \R^{2\times 2},\\
  V&=(v_{j,k})\in \R^{m\times 2} \quad \text{ with } v_{j,1}=\sum_{i=1}^n \cos(\mu_1ij) \text{ and } v_{j,2}=\sum_{i=1}^n i\cdot\cos(\mu_1ij), \quad j=1,\ldots,m\,,\\
   W&=(w_{j,k})\in \R^{m\times 2} \quad \text{ with } w_{j,1}=-\sum_{i=1}^n \sin(\mu_1ij) \text{ and } w_{j,2}=-\sum_{i=1}^n i\cdot\sin(\mu_1ij),\quad j=1,\ldots,m\,,\\
   D&=(d_{j,k})\in \R^{m\times m} \quad \text{ with } d_{j,k}=\sum_{i=1}^n \cos(\mu_1ij)\cdot\cos(\mu_1ik), \quad j,k=1,\ldots,m \,,\\
   F&=(f_{j,k})\in \R^{m\times m} \quad \text{ with } f_{j,k}=-\sum_{i=1}^n \sin(\mu_1ij)\cdot\cos(\mu_1ik), \quad j,k=1,\ldots,m \,, \text{ and } \\
   G&=(g_{j,k})\in \R^{m\times m} \quad \text{ with } g_{j,k}=\sum_{i=1}^n \sin(\mu_1ij)\cdot\sin(\mu_1ik),\quad j,k=1,\ldots,m \,.
\end{align*}
Since we assumed $j/h_0\notin\Z$, the sums in the coefficients $v_{j,1}$ and $w_{j,1}$ can be simplified as follows, cf.\ e.g.~\cite[p.206]{Loebus}:
\begin{align*}
  v_{j,1}=\frac 12\left(\frac{\sin(\mu_1(n+\frac 12)j)}{\sin(\frac 12 \mu_1j)}-1\right) \quad \text{ and } \quad
  w_{j,1}=\frac 12 \frac{\cos(\mu_1(n+\frac 12)j)-\cos(\frac12{\mu_1j})}{\sin(\frac 12{\mu_1j})},\quad j=1,\ldots,m.
\end{align*}
The condition $j/h_0\notin\Z$ ensures also that $\sin(\frac 12
\mu_1j)=\sin(\pi \frac j{h_0})\neq 0$. Hence
\begin{align*}
  |v_{j,1}|\leq \frac 12\frac{|\sin(\mu_1(n+\frac 12)j)-\sin(\frac 12 \mu_1j)|}{|\sin(\frac 12 \mu_1j)|}\leq \frac{1}{|\sin(\frac 12{\mu_1j})|} \quad \text{ and } \quad
  |w_{j,1}|\leq \frac{1}{|\sin(\frac 12{\mu_1j})|}, \quad j=1,\ldots,m.
\end{align*}
This means that the coefficients $v_{j,1}$ and $w_{j,1}$ are
bounded from above and below by constants independent of $n$ for
each $j=1,\ldots,m$. In fact, they are all bounded by the same
constant
$\kappa:=\left(\min\limits_{j=1,\ldots,2m}|\sin(\frac12\mu_1j)|\right)^{-1}$.

Using the relations $\cos x \cos y=\frac 12
(\cos(x+y)+\cos(x-y))$, $\sin x \cos y=\frac 12
(\sin(x+y)+\sin(x-y))$ and $\sin x \sin y=\frac 12
\left(\cos(x-y)-\cos(x+y)\right)$ and  the above formulas, one
obtains analogously that the coefficients $d_{j,k}$, $f_{j,k}$ and
$g_{j,k}$ are bounded from above and below by constants
independent of $n$, whenever $j\neq k$ and for $f_{j,k}$ also in
the case $j=k$. This is ensured by the fact, that $j+k\leq 2m$ and
so, by the assumptions of the lemma, $(j-k)/h_0,
(j+k)/h_0\notin\Z$. In particular,
\begin{align*}
  f_{j,k}&=-\frac 12 \sum_{i=1}^n \sin(\mu_1 i(j+k))-\frac 12 \sum_{i=1}^n \sin(\mu_1i(j-k)),
\end{align*}
and so for $j=k$ the second sum on the right vanishes, while the first sum is absolutely bounded by $\kappa$ (similarly as $w_{j,1}$).
 Hence all entries of $X_n^\top X_n$ except those on the diagonal and in the second row and column are bounded absolutely by
 constant $\kappa$
 independent of $n$.
On the diagonal, we have similarly as for $v_{j,1}$ and $w_{j,1}$ 
\begin{align*}
d_{j,j}&=\frac 12 \sum_{i=1}^n \cos(0)+\frac 12 \sum_{i=1}^n
\cos(\mu_1i(2j))=\frac n2 + \frac 14\left(\frac{\sin(\mu_1(n+\frac
12)2j)}{\sin(\mu_1j)}-1\right), \quad j=1,\ldots, m,
\end{align*}
and
\begin{align*}
g_{j,j}&=\frac 12 \sum_{i=1}^n \cos(0)-\frac 12 \sum_{i=1}^n
\cos(c\mu_1 i(2j))=\frac n2 - \frac
14\left(\frac{\sin(\mu_1(n+\frac
12)2j)}{\sin(\mu_1j)}-1\right),\quad  j=1,\ldots, m.
\end{align*}
Hence, as $n\to\infty$, $\xi_{j,j}=\Theta(n)$ for $j\neq 2$ and $\xi_{2,2}=\Theta(n^3)$.
For the second column of $X_n^\top X_n$, we use 
\cite[Lemma 2.3(a), p.220]{Loebus} (which states that for any sequence $(a_n)_n$ of real numbers whose partial sums are bounded from above and below by a constant $C$, one has $|\sum_{i=1}^n i a_i|\leq 2n C$ ) to conclude that
\begin{align*}
  |v_{j,2}|=\left|\sum_{i=1}^n i\cdot\cos(\mu_1ij)\right|\leq 2\kappa n \quad \text{ and } \quad |w_{j,2}|=\left|\sum_{i=1}^n i\cdot\sin(\mu_1ij)\right|\leq 2\kappa n,
\end{align*}
for $ j=1,\ldots,m$.
Hence, as $n\to\infty$, $\xi_{j,2}=O(n)$ for $j=3,4,\ldots,2m$, while $\xi_{1,2}=\Theta(n^2)$ and $\xi_{2,2}=\Theta(n^3)$.

Having computed the order of growth of all coefficients of $X_n^\top X_n$, it is now easily seen, that $\det(X_n^\top X_n)=\Theta(n^{2m+4})$ as $n\to\infty$. Indeed, we have for the product of the diagonal entries $\prod_{j=1}^{2m+1} \xi_{j,j}=\Theta(n^{2m+4})$ as $n\to\infty$ and this product is the only term in the Leibnitz expansion of $\det(X_n^\top X_n)$ with this order of growth. All other terms are at most of the order of $n^{2m+3}$ as $n\to \infty$. Hence the order of growth cannot be reduced by cancellations with other terms.

For the $(j,j)$ minors $M_n^{j,j}$ of $X_n^\top X_n$ we can argue similarly. If the $j$-th row and column are deleted, in the remaining matrix the diagonal entries are still those with the maximal order of growth in each row and column. Hence the order of growth of the determinant $M_n^{j,j}$ is bounded by the product of the orders of its diagonal entries, that is $M_n^{j,j}=O(n^{2m+3})$ as $n\to\infty$ for each $j=1,\ldots, 2m$. (For $j=2$, we even have $M_n^{j,j}=O(n^{2m+1})$.) This completes the proof of
\eqref{eq:minor_est} and thus of the weak consistency of the estimator $\hat{\beta}^{(n)}$ as stated.
\end{proof}

\begin{theorem}\label{thm:consist}
Under the assumptions of Lemma \ref{lemm:Cons2MethRegression}, for any $k\in\{0,\ldots, d\}$,
the estimators $\hat{s}_{k}^{(n)}$ of $s$ and $|\widehat
\sC_{k}^{(n)}(F)|$ of $|\overline \sC_{k}(F)|$ are weakly
consistent as $n\to\infty$.
\end{theorem}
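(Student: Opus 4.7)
The plan is to deduce the theorem as a short consequence of Lemma \ref{lemm:Cons2MethRegression} together with the continuous mapping theorem. The lemma already gives weak consistency of the full least squares estimator
\[
\hat{\beta}^{(n)}=(\hat{\beta}_{k}^{(n)}, \hat{s}_{k}^{(n)},\hat{\alpha}_{k,1}^{(n)},\ldots,\hat{\alpha}_{k,m}^{(n)},\hat{\gamma}_{k,1}^{(n)},\ldots,\hat{\gamma}_{k,m}^{(n)})^\top \stackrel{P}{\longrightarrow} \beta,
\]
for the true parameter vector $\beta=(\beta_k,s,\alpha_1,\ldots,\alpha_m,\gamma_1,\ldots,\gamma_m)^\top$. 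Both quantities whose consistency we are asked to establish are continuous functions of this vector, so the statement reduces to producing the right continuous functionals and invoking the continuous mapping theorem for convergence in probability.

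For $\hat{s}_k^{(n)}$, this is immediate: the dimension estimator is simply the second coordinate of $\hat{\beta}^{(n)}$, so coordinate projection (a continuous linear map) yields $\hat{s}_{k}^{(n)}\stackrel{P}{\longrightarrow} s$. For the fractal curvature estimator, I would use the representation $\hat{f}_k^{(n)}(x)=\sum_{j=1}^m\bigl(\hat{\alpha}_{k,j}^{(n)}\cos(\mu_1 j x)-\hat{\gamma}_{k,j}^{(n)}\sin(\mu_1 j x)\bigr)$, obtained from \eqref{eq:EstFm} by the standard trigonometric identities used when recasting \eqref{eq:regr_final} as a linear regression in the parameters $\alpha_j,\gamma_j$. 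Then, by \eqref{eq:EstFracCurv},
\[
|\widehat{\sC}_k^{(n)}(F)|=\Phi(\hat{\beta}^{(n)}),
\qquad
\Phi(b,\sigma,a_1,\ldots,a_m,c_1,\ldots,c_m):=\frac{e^{b}}{h_0}\int_0^{h_0}\exp\!\Bigl(\sum_{j=1}^m\bigl(a_j\cos(\mu_1 j x)-c_j\sin(\mu_1 j x)\bigr)\Bigr)\,dx,
\]
and, analogously, the true value satisfies $|\overline{\sC}_k(F)|=\Phi(\beta)$ thanks to \eqref{eq:Int_curv-formula} combined with assumption (A3').

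The remaining task is therefore to verify that $\Phi:\R^{2m+2}\to\R$ is continuous, which is the only step requiring a small argument. Since the integrand is jointly continuous in $(a_1,\ldots,a_m,c_1,\ldots,c_m,x)$ and, on any bounded set of parameters, is bounded by an integrable function of $x$ (trivially, as $x$ ranges over the compact interval $[0,h_0]$), dominated convergence yields continuity of $(a,c)\mapsto\int_0^{h_0}\exp(\cdots)\,dx$; multiplying by the continuous factor $e^b/h_0$ preserves continuity. With $\Phi$ continuous, the continuous mapping theorem gives $\Phi(\hat{\beta}^{(n)})\stackrel{P}{\longrightarrow}\Phi(\beta)$, that is, $|\widehat{\sC}_k^{(n)}(F)|\stackrel{P}{\longrightarrow}|\overline{\sC}_k(F)|$, completing the proof.

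Overall there is no serious obstacle: the heavy lifting has already been done in Lemma \ref{lemm:Cons2MethRegression}. The only point that requires a line or two of care is checking continuity of $\Phi$, and even there the compactness of the integration domain trivializes the dominated convergence bound.
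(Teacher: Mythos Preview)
Your argument is correct. Both your proof and the paper's start from Lemma~\ref{lemm:Cons2MethRegression} and then pass from the convergence of $\hat\beta^{(n)}$ to that of $|\widehat\sC_k^{(n)}(F)|$ via continuity, so the underlying idea is the same. The difference is in packaging: the paper decomposes the error as
\[
|\widehat\sC_k^{(n)}(F)|-|\overline\sC_k(F)|=I_{1,n}+I_{2,n},
\]
with $I_{1,n}$ carrying the variation in $\hat f_k^{(n)}$ (bounded via a mean-value argument and the uniform estimate $|\hat f_k^{(n)}(x)-f_k(x)|\le\psi_n\stackrel{P}{\to}0$) and $I_{2,n}$ carrying the variation in $\hat\beta_k^{(n)}$ (handled by the continuous mapping theorem applied to $\exp$). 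You instead observe directly that $|\widehat\sC_k^{(n)}(F)|=\Phi(\hat\beta^{(n)})$ for a single continuous functional $\Phi$ on $\R^{2m+2}$, and apply the continuous mapping theorem once. Your route is slightly more economical and also sidesteps the somewhat awkward detour through $\hat b_{kj}^{(n)}$ and $\hat\varphi_{kj}^{(n)}=\arctan(\hat\gamma_j^{(n)}/\hat\alpha_j^{(n)})$ that the paper makes; working in the linear parameters $(\alpha_j,\gamma_j)$ throughout, as you do, avoids any quadrant issues in the arctan. The paper's version, on the other hand, makes the uniform-in-$x$ convergence of $\hat f_k^{(n)}$ explicit, which is a mild bonus if one cares about that intermediate fact.
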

\begin{proof}
By Lemma \ref{lemm:Cons2MethRegression}, the estimators
$\hat{\beta}_{k}^{(n)}$, $ \hat{s}_{k}^{(n)}$ and $\hat{f}_{k}^{(n)}(x)$  are weakly
consistent as $n\to\infty$ in the
regression model \eqref{eq:regr_final}. More precisely, the estimators
$\hat{b}_{kj}^{(n)}$ and  $\hat{\varphi}_{kj}^{(n)}$
in \eqref{eq:EstFm} are given by
$$\hat{b}_{kj}^{(n)}=\sqrt{(\hat{\alpha}_{j}^{(n)})^2+(\hat{\gamma}_{j}^{(n)}) ^2}, \qquad
\hat{\varphi}_{kj}^{(n)}=\arctan{\left( \frac{\hat{\gamma}_{j}^{(n)}}{\hat{\alpha}_{j}^{(n)}}\right)},\quad j=1,\ldots, m.
$$
We split the estimation error into two parts as follows:
$$
|\widehat \sC_{k}^{(n)}(F)|-|\overline
\sC_{k}(F)|=I_{1,n}+I_{2,n},
$$
where by \eqref{eq:EstFracCurv}
$$
I_{1,n}= \frac {\exp\{\hat{\beta}_{k}^{(n)}\}}{h_0}
  \int_0^{h_0} \left( \exp\{ \hat{f}_{k}^{(n)}(x)\}   -   \exp\{ f_{k}(x)\} \right) \, {d} x,
$$
$$
I_{2,n}= \frac {\exp\{\hat{\beta}_{k}^{(n)}\}
-\exp\{\beta_{k}\}   }{h_0}
  \int_0^{h_0}  \exp\{ f_{k}(x)\} \, {d} x.
$$

To see the convergence  $I_{1,n} \stackrel{P}{\longrightarrow} 0$ as
$n\to\infty$, observe that the sequence
$\left(\hat{\beta}_{k}^{(n)}\right)_n$ converges to $\beta_{k}$ as $n\to\infty$ and is thus bounded.
Furthermore, $\hat{f}_{k}^{(n)}(x)
\stackrel{P}{\longrightarrow} f_{k}(x) $ for any
$x\in[0,h_0]$ as $n\to\infty$, and this convergence is uniform
with respect to $x$, since
\begin{multline}\nonumber
\left|\hat{f}_{k}^{(n)}(x)- f_{k}(x)\right|\le
\sum\limits_{j=1}^m\left( |\hat{b}_{kj}^{(n)}\cos
\hat{\varphi}_{kj}^{(n)} - b_{kj}\cos \varphi_{kj}
|\right.
+\left. |\hat{b}_{kj}^{(n)}\sin \hat{\varphi}_{kj}^{(n)}
 - b_{kj}\sin \varphi_{kj}| \right)=:\psi_n
\stackrel{P}{\longrightarrow} 0
\end{multline}
 as $n\to\infty$ for any $x\in[0,h_0]$. Noting that, for each $x\in[0,h_0]$ and each $n\in\N$, there exists a number $\xi_n(x)$ between $0$ and  $\hat f_k^{(n)}(x)-f_k(x)$ such that, by the mean value theorem,
 $$
 |e^{\hat f_k^{(n)}(x)}-e^{f_k(x)}|=|e^{f_k(x)}(e^{\hat f_k^{(n)}(x)-f_k(x)}-1)|=e^{f_k(x)} e^{\xi_n(x)}|\hat f_k^{(n)}(x)-f_k(x)|
 $$
 and that $\xi_n(x)\leq |\hat f_k^{(n)}(x)-f_k(x)|\leq \psi_n$ for each $x\in[0,h_0]$,
 we conclude that
 $$
|I_{1,n}|\le e^{\psi_n} \psi_n \int_0^{h_0}  \exp\{ f_{k}(x)\}\,
{d} x \stackrel{P}{\longrightarrow} 0  \quad \text { as } n\to\infty.
$$
For the convergence of $I_{2,n}$ simply observe that, by the continuous mapping theorem,
$$
 |e^{\hat \beta_k^{(n)}}-e^{\beta_k}|
 \stackrel{P}{\longrightarrow} 0 \quad \text{ as } n\to \infty\,.
 $$
\end{proof}

\begin{remark}
We believe that under the assumptions of Theorem
\ref{thm:consist}, also the estimators $\tilde{s}^{(n)}$,
$|\widetilde \sC_{k}^{(n)}(F)|$, $k=0,\ldots, d$ in the case of
simultaneous regression are weakly consistent as $n\to\infty$ and
that this can be proved with essentially the same arguments as in
the case of separate regression in the proof of Theorem
\ref{thm:consist}. However, in view of the rather long and
technical arguments in the `easy' case of separate regression, we
did not attempt to verify all the details.
\end{remark}

\begin{remark}
 The fractal curvature estimates $\exp(\hat \beta^{(n)}_k)$ only rarely deviate significantly from the estimates obtained by the second method.
Theoretically, since the $\hat \beta^{(n)}_k$ are now obtained through averaging over $\log |p_k(e^{-x})|$ plus some errors, an application of Jensen's inequality to the convex function $\exp$ suggests that the estimates of the absolute value of fractal curvature are now systematically too small; practically, however, this discrepancy between first and second method is not visible in the examples we consider in Section \ref{sec:numerical}.
\end{remark}



 Let $m\in \N$ be fixed and $h_0>0$ unknown.
In \cite[Theorem 1'']{Han73}, strong consistency of $\hat
\beta^{(n)}$ (after estimation and subtraction of the linear trend
described in Section \ref{subsec:estimators}, i.e. setting
formally $\beta_k=s=0$) as well as of $\hat{h}_0$ estimated by
\eqref{estim_h0} is proven under the assumption that $\{
\delta_{ki} \}$ forms a stationary regular sequence with zero
mean. Ergodicity of $\{ \delta_{ki} \}$ together with further
assumptions such as e.g. the continuity of its spectral density
$f_\delta$ imply the asymptotic normality of $\widehat{\mu}_j$ and
$\hat \beta^{(n)}$, see \cite[Theorem 2]{Han73}. A law of iterated
logarithm for $\widehat{\mu}_j$ is given in
\cite[p.~57]{QuiHan01}.

Now let $m$ be unknown. If $\{ \delta_{ki}\}$ is a stationary
Gaussian linear process with known positive spectral density
$f_{\delta}$ then an a.s. consistent estimate of $m$ (as
$n\to\infty$) is given in \cite[Theorem 15, p.~75]{QuiHan01}. Its
idea is to set $\widehat{m}$ to be the smallest possible value of
$m$ such that the log likelihood of $\{ \tilde{y}_{ki}\}$
decreases when gradually reducing $m$. For $\{ \delta_{ki}\}$
being an AR process with Gaussian innovations, see
\cite[p.~80]{QuiHan01}.

If parameters $h_0$ and $m$ are consistently estimated then the
consistency of the estimators of the fractal curvatures can be proven
similarly as in Theorem \ref{thm:consist}.
\section{Numerical simulations and results}
\label{sec:numerical}

\paragraph{Binary images of fractals.}
We assume that binary images consist of pixels which belong to the
rectangular grid $\mathbb Z^2$, endowed with the Euclidean metric
inherited from $\mathbb R^2$. This means that the distance between
neighbouring pixels is $1$, which we henceforth adopt as the unit
of length. Pixels can assume the two values $0$ (white) and $1$
(black). A binary image is a map from the lattice $\mathbb Z^2$ to
the set $\{0,1\}$. We say that a binary image $\tilde F$ is a
discretization of a subset $F\subset\mathbb R^2$ if, for any pixel
$(k,l) \in \mathbb Z^2$, $\tilde F(k,l)=1$, whenever the square
$[k,k+1) \times [l,l+1)$ has non-empty intersection with $F$.

Binary images of self-similar sets can easily be generated on a
computer using iterated function systems; for algorithms see e.g.\ \cite{barnsley2000fe}.
For the generation of the sample images in this paper we have used the
free software \emph{Fractal Explorer} \cite{fractalexplorer}.
We have generated binary images of three arithmetic and three non--arithmetic fractals on a 3000 by 3000 pixel canvas (see Figure \ref{fig:thesets}).

\begin{figure}[ht]
\centering
\subfigure[Sierpi\'nski Gasket]{
\includegraphics[width=0.3\textwidth]{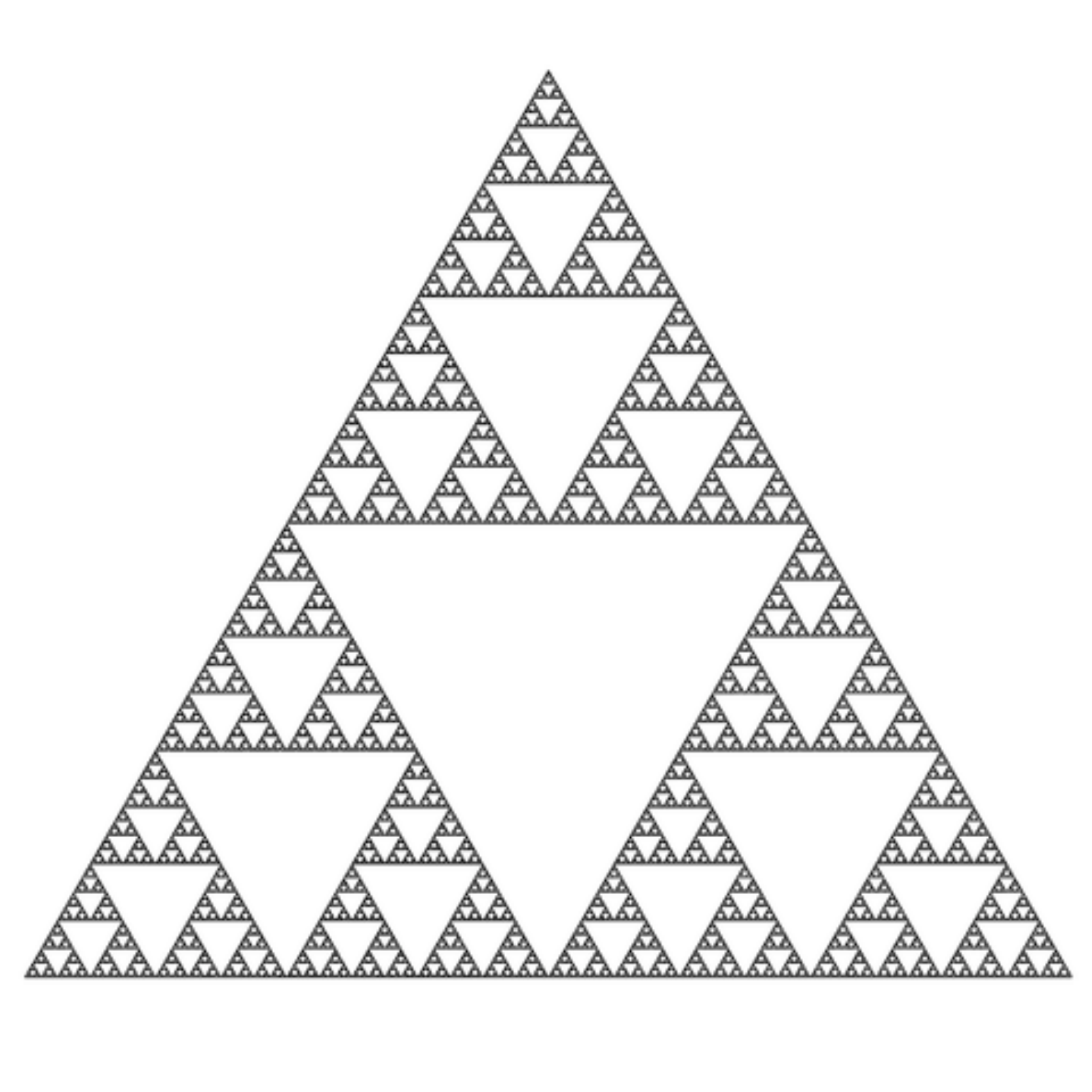}
}
\subfigure[Sierpi\'nski Carpet]{
\includegraphics[width=0.3\textwidth]{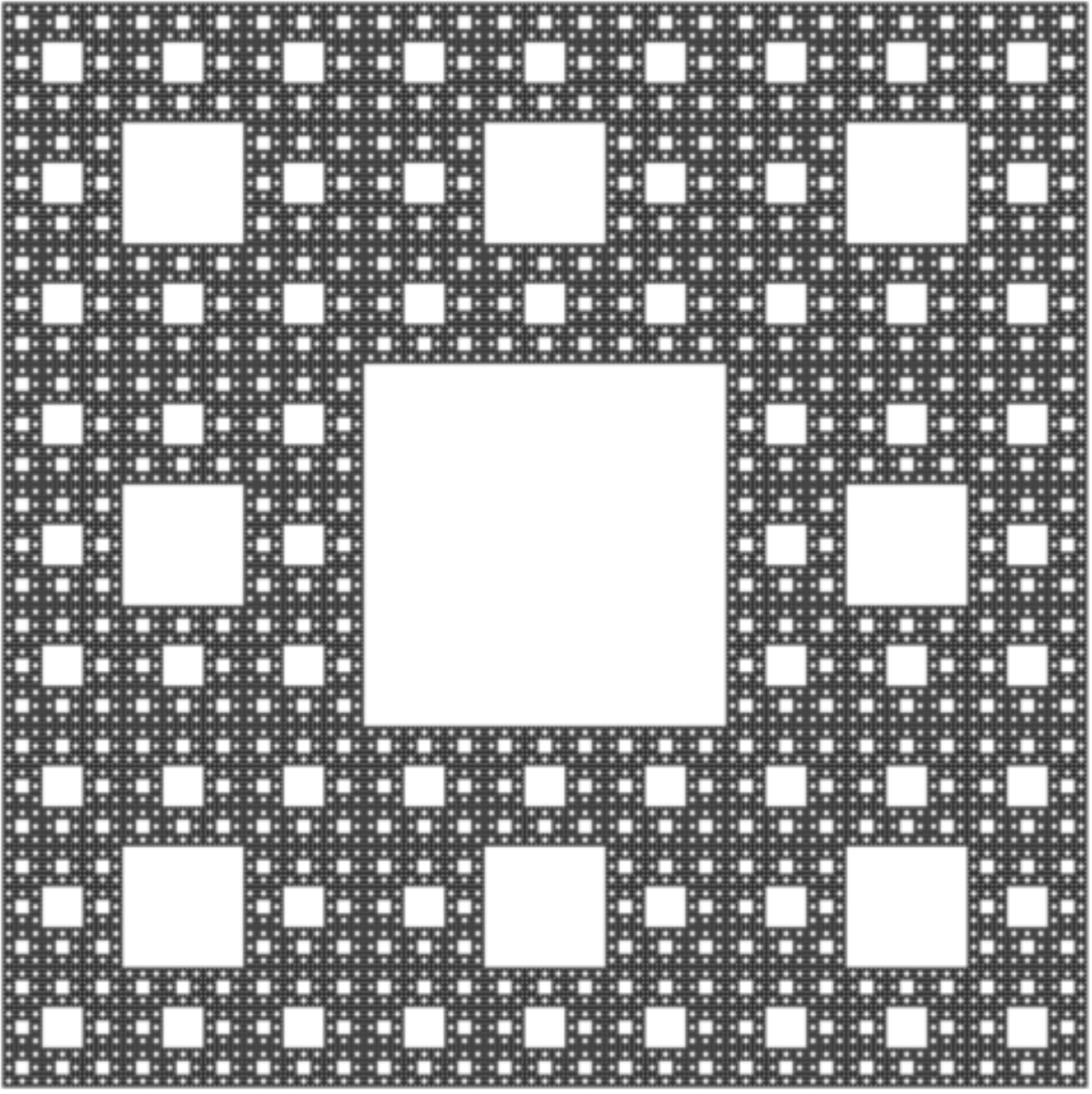}
}
\subfigure[modified Sierpi\'nski Carpet]{
\includegraphics[width=0.3\textwidth]{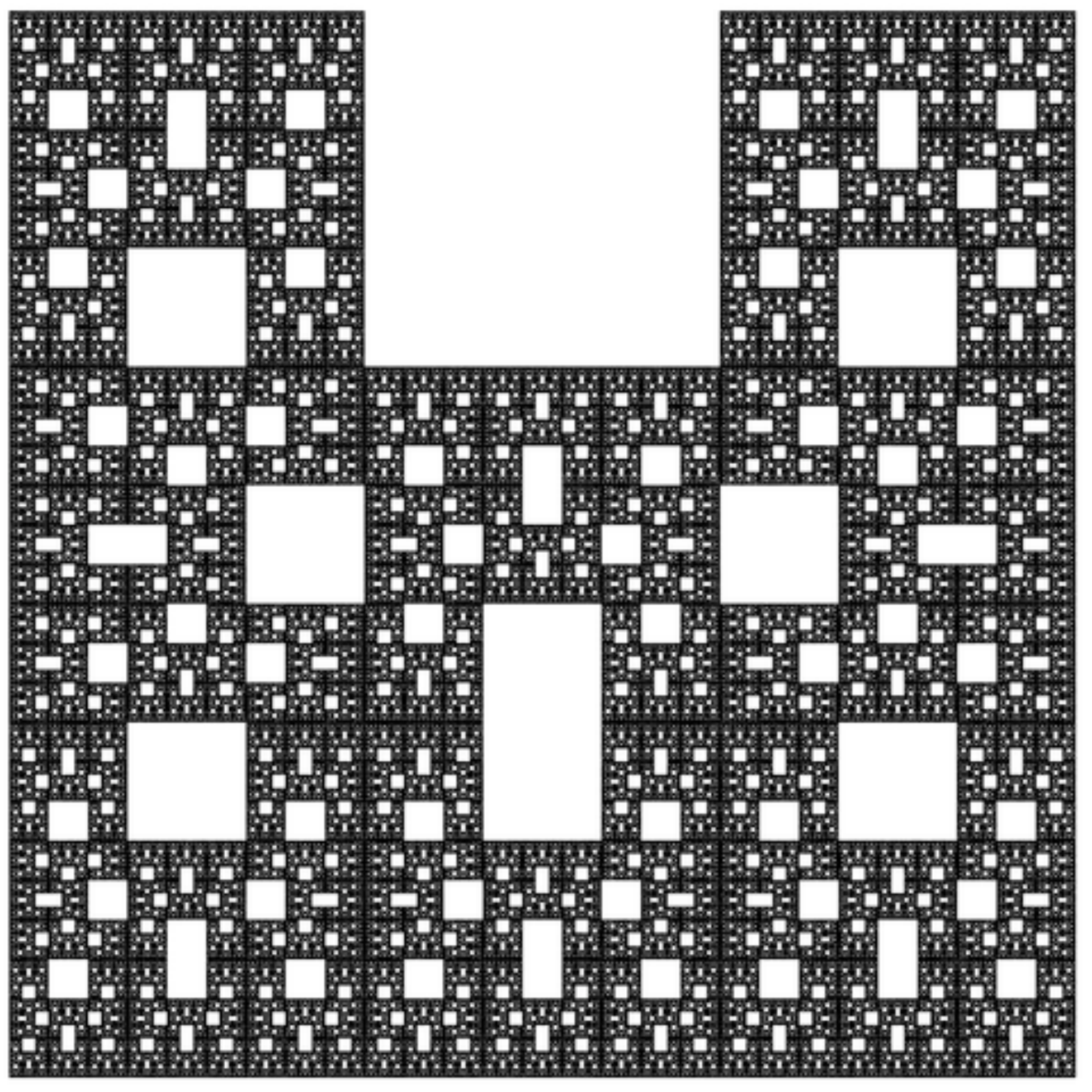}
}
\subfigure[Triangle]{
\includegraphics[width=0.3\textwidth]{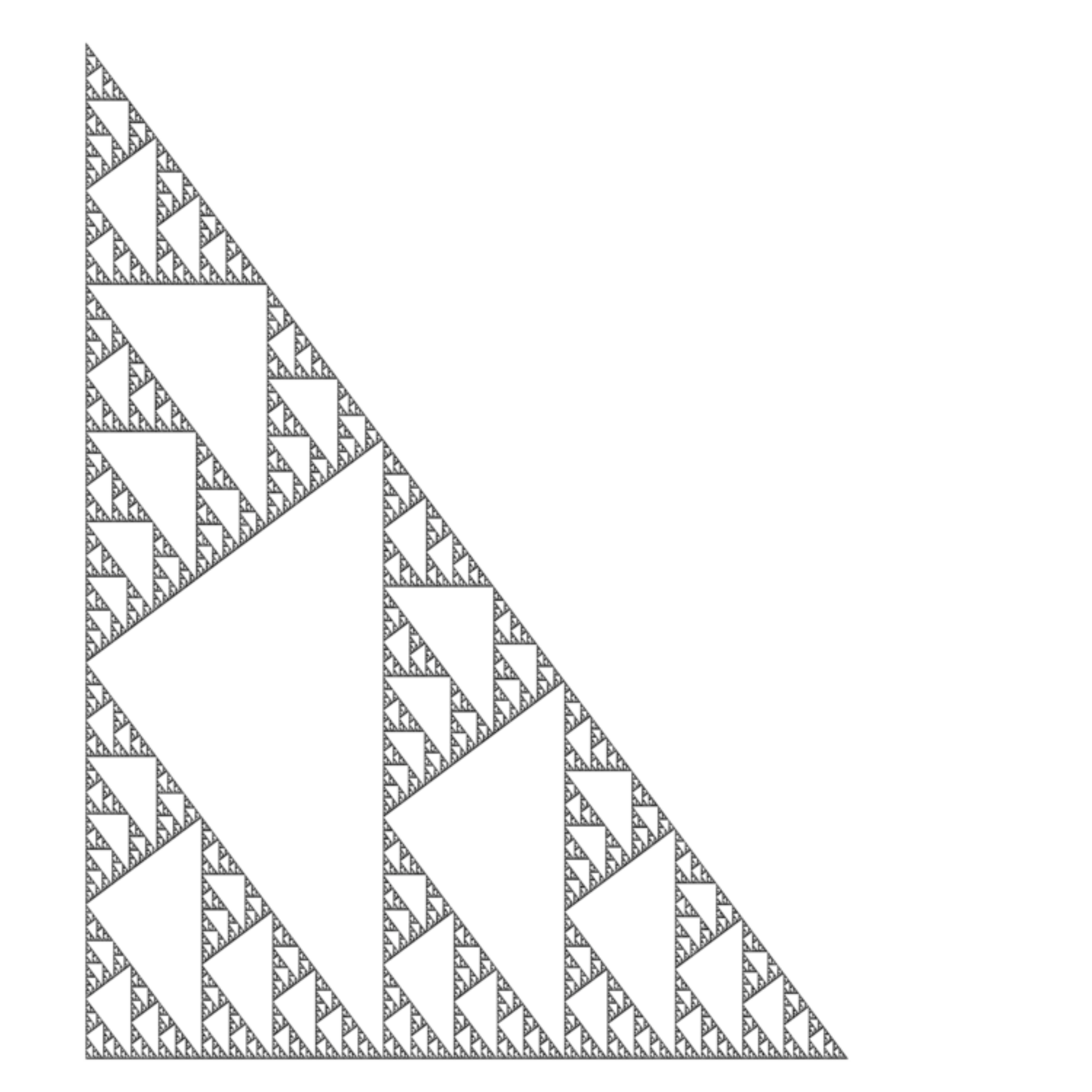}
}
\subfigure[Cross set]{
\includegraphics[width=0.3\textwidth]{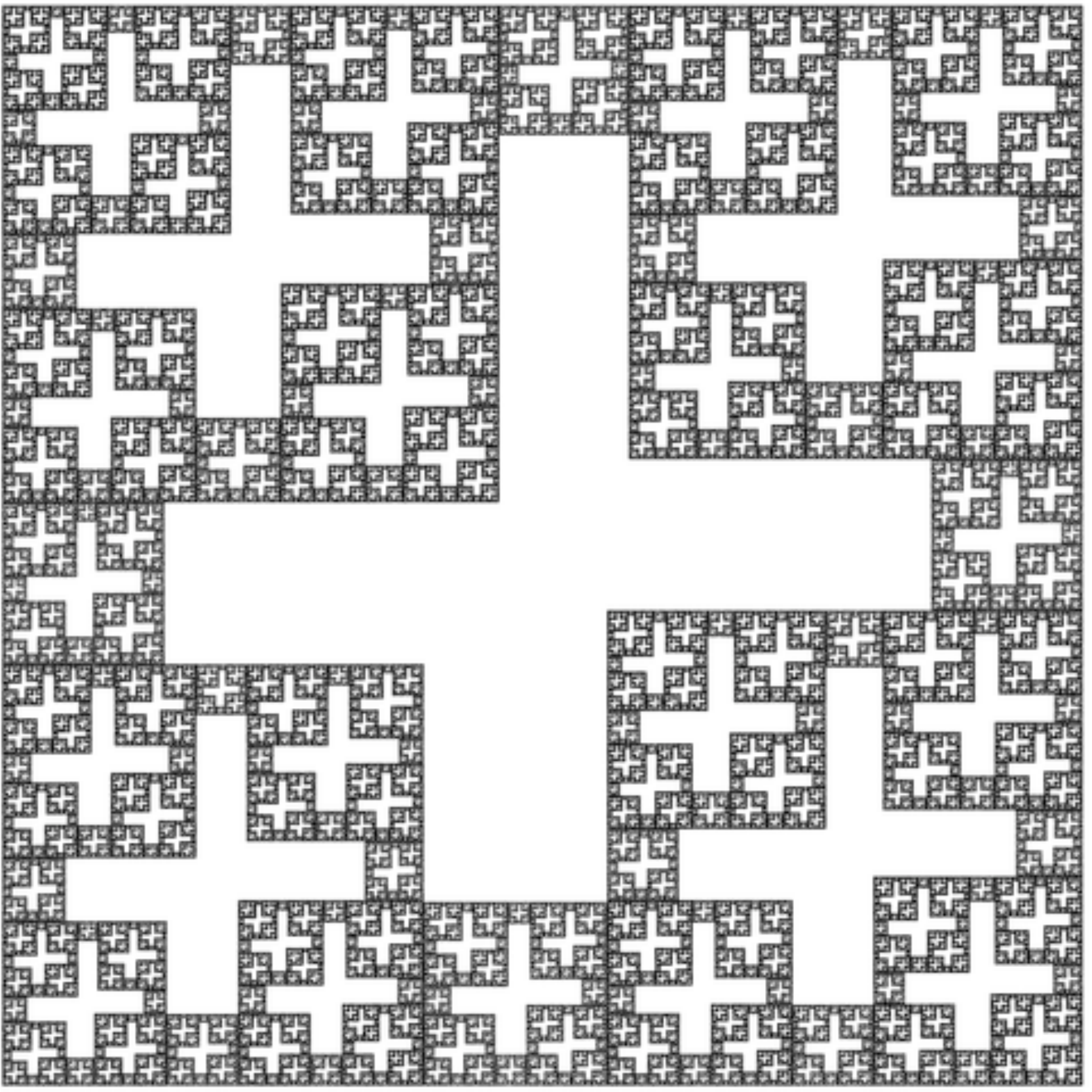}
}
\subfigure[Supergasket]{
\includegraphics[width=0.3\textwidth]{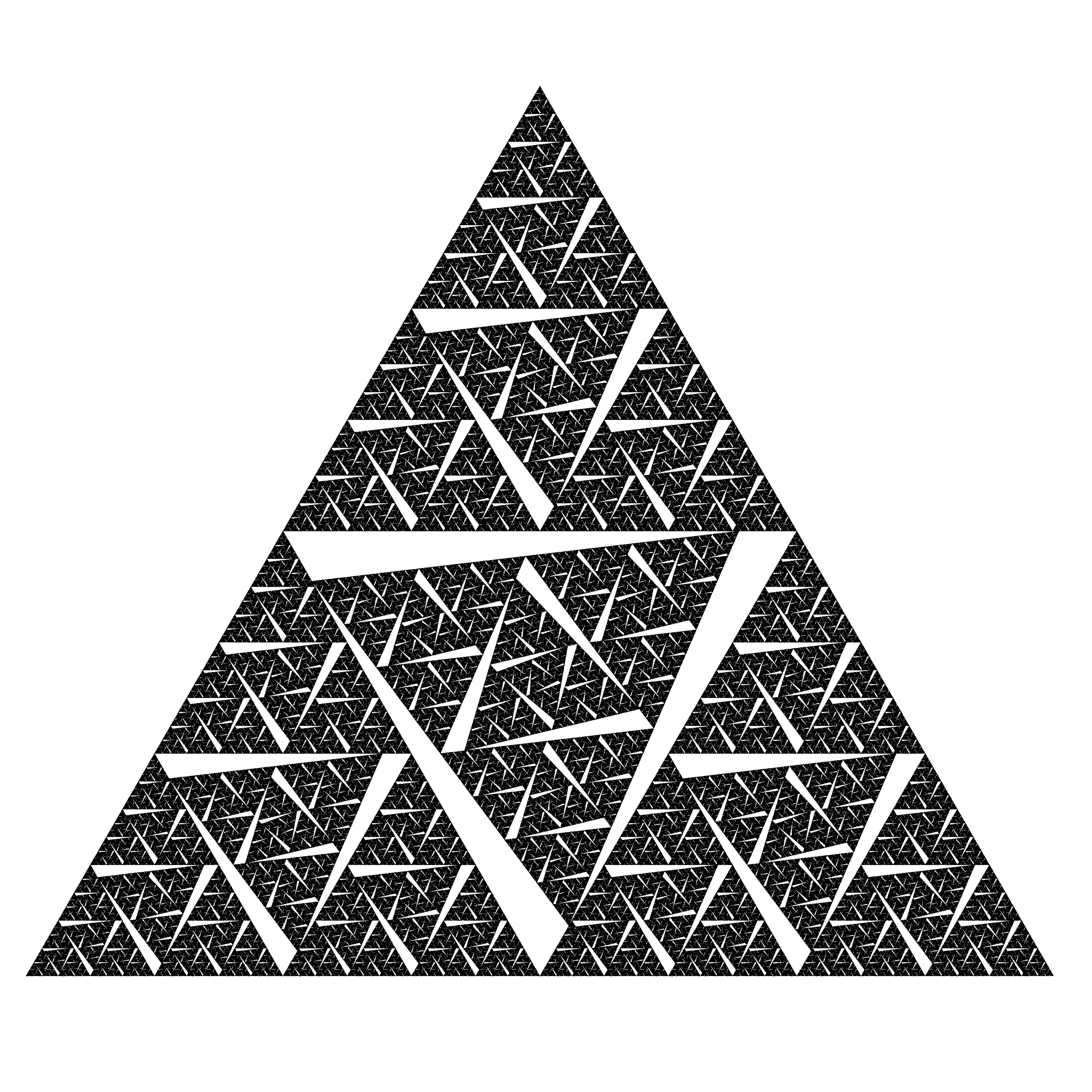}
} \caption{The sample fractals. Sets (a) -- (c) are arithmetic and
sets (d) -- (f) are non-arithmetic.} \label{fig:thesets}
\end{figure}

\paragraph{Obtaining the data.}
Let $\tilde F$ be a discretized fractal set. For $\eps >0$,
we approximate the dilated set $F_\eps$ by the dilated binary images
$\tilde F_\eps$, which we calculate as follows (cf.~e.g.~\cite{soille03}): First, compute the
distance transform of $\tilde F$,
  \begin{eqnarray*}
  D_{\tilde F} : \mathbf \mathbb Z^2 &\rightarrow & \R \\
                p &\mapsto  & d\left( p,F^{-1}\left( \{1\}\right) \right),
  \end{eqnarray*}
which records the distance of each pixel on the canvas to the nearest black pixel.
Then $\tilde F_\eps$ results from setting all pixels $p$ to black which satisfy $D_F(p)\leq \eps$.

We generated discretized dilated images $\tilde F_{\eps_i}$ for a
set of dilation radii $\eps_i = e^{-x_i}$. The $x_i$ were evenly
spaced with distance $0.02$ ranging from around $-4.5$ to $-1$
(which correspond to radii $\eps_i$ ranging from $87$ down to
$2.7$). This seemed feasible, as for too large $\eps >
e^{4.5}\approx 90$ the scaling behaviour of the intrinsic volumes
approached that of a full $2$-dimensional set, and for too small
$\eps < e \approx 2.71$ the discretization errors were too large.
We note that especially the choice of the largest dilation radius
$\eps_1$ needs to be adapted to each fractal $F$, since there is
no good a priori choice: If $\eps_1$ is too small this will result
in a shortage of data, whereas a too large $\eps_1$ will distort
the estimates.

We note that there is a set $\left \{\sqrt{{1}/{\pi}}, \sqrt{{5}/{\pi}}, \sqrt{{9}/{\pi}}, \sqrt{{37}/{\pi}},\ldots \right\}$ of radii which is special
in the sense that discrete and continuous disks with these radii have the same area.
Stoyan \cite{stoyan1994frs} recommends this choice of radii for the sausage method, and it might also be considered for the methods discussed in
this paper, especially if only a small set of data is to be collected due to computational limitations.

The next step is to measure the intrinsic volumes
$C_k(F_{\eps_j})$ for each $\eps_j$ and each $k=0,\ldots,d$. We
employ the algorithms described in \cite{minkdiscr} and
\cite{guderleietal} which determine for a fixed set $F_{\eps_j}$ all functionals $C_k(F_{\eps_j})$, $k=0, \ldots, d$ simultaneously. The relevant data set of $y_{ki}$-values is then
determined according to equation (\ref{eq:def-variables}).

\paragraph{The estimates.}
We have implemented the simultaneous linear least squares
regression estimators (LRE) from eqs.\ \eqref{eq:sJ} and
\eqref{eq:beta_kJ} and the simultaneous non-linear least squares
regression estimators (NRE) of the second method as given by
\eqref{eq:regr_final}, which were then included in the software
libraries of project \emph{GeoStoch} \cite{geostoch} of Ulm
University. We applied both LRE and NRE to the data set of each
fractal, regardless of whether it was an arithmetic or a
non-arithmetic set. The resulting estimates for the fractal
dimension and fractal curvatures are collected in
Tables~\ref{tab:dim} and \ref{tab:curv}.

\begin{table}[ht]
\centering
\begin{tabular}{r@{ }l|*{6}l}
 & &\includegraphics[width=1cm]{gasket.pdf}
&\includegraphics[width=1cm]{carpet.pdf}
&\includegraphics[width=1cm]{modcarpet.pdf}
&\includegraphics[width=1cm]{triangle.pdf}
&\includegraphics[width=1cm]{quadrate.pdf}
& 
\includegraphics[width=1cm]{barlow.pdf}
\\ \hline
& exact & 1.585 & 1.893 & 1.893 & 1.588 & 1.794 & 1.893 \\ \hline
 &box-counting& 1.54 & 1.88 & 1.83 & 1.57 & 1.78 & 1.84 \\
 LRE,& $J=\{0,1,2\}$ & 1.584 & 1.87 & 1.84 & 1.576 & 1.83 & 1.88 \\
NRE,& $J=\{0,1,2\}$& 1.587 & 1.87 & 1.85 & 1.576 & 1.83 & 1.88 \\
LRE,& $J=\{2\}$ & 1.586 & 1.87 & 1.86 & 1.589 & 1.78 & 1.889 \\
NRE,& $J=\{2\}$& 1.586 & 1.87 & 1.86 & 1.589 & 1.78 & 1.889 \\
 LRE,& $J=\{1\}$ & 1.558 & 1.8 & 1.7 & 1.57 & 1.73 & 1.85 \\
NRE,&  $J=\{1\}$ & 1.558 & 1.8 & 1.71 & 1.57 & 1.74 & 1.85 \\
 LRE,&  $J=\{0\}$ & 1.61 & 1.95 & 1.95 & 1.57 & 1.98 & 1.89 \\
 NRE,&  $J=\{0\}$ & 1.6 & 1.93 & 1.99 & 1.57 & 1.94 & 1.88\\[1mm]
\end{tabular}
\caption{Estimates of fractal dimension. The first row contains
the known exact dimension of each fractal, rounded to three
decimals. The set $J$ describes the orders $k$ of intrinsic
volumes $C_k$ used in the estimate. LRE and NRE refer to the first
and second method from Section~\ref{sec:estimator}, respectively.
For the method NRE, the period $h_0$ was estimated from the data,
and the detail parameter was chosen as $m=4$.}
\label{tab:dim}
\end{table}

The results suggest that for dimension estimation, LRE and NRE perform equally well. The dimension estimates based on boundary length data only ($k=1$)
 and Euler characteristic data only ($k=0$) are less reliable than estimates based on the volume data ($k=2$), which corresponds to the sausage method.
The dimension estimates based on all three intrinsic volumes ($k\in\{0,1,2\}$), however, seem to be comparable in accuracy to the method ``$k=2$''
and the standard box counting method, for which we used the free software FracLac \cite{fraclac}.

\begin{table}[ht]
 \centering
\begin{tabular}{rr|*5{|r}}
&& & $s$ known, & $s, h_0$ known, & $s$ unknown, & $s, h_0$ unknown\\
&&exact&LRE&NRE&LRE&NRE\\ \hline \hline
\multirow{3}{*}{\includegraphics[width=1cm]{gasket.pdf}}
&k=0&-13197&-10868&-11389&-10848&-11386\\
&k=1&117230&124471&124557&124235&123251\\
&k=2&564100&572880&572845&571798&566781\\ \hline
\multirow{3}{*}{\includegraphics[width=1cm]{carpet.pdf}}
&k=0&-58716&-47745&-58126&-45242&-54107\\
&k=1&262770&363432&364825&344376&339293\\
&k=2&4900200&5210885&5209660&4937666&4847744\\ \hline
\multirow{3}{*}{\includegraphics[width=1cm]{modcarpet.pdf}}
&k=0&-50742&-41177&-47178&-35060&-36587\\
&k=1&260960&363062&361158&309123&312815\\
&k=2&4871275&5095192&5092666&4338213&4418800\\ \hline
\multirow{3}{*}{\includegraphics[width=1cm]{triangle.pdf}}
&k=0&-9843&-8828&N/A&-8555&-8554\\
&k=1&100416&104144&N/A&100919&100908\\
&k=2&487649&495583&N/A&480237&480184\\ \hline
\multirow{3}{*}{\includegraphics[width=1cm]{quadrate.pdf}}
&k=0&?&-17555&N/A&-19366&-19377\\
&k=1&?&381454&N/A&420805&420571\\
&k=2&?&3387112&N/A&3736520&3735336\\ \hline
\multirow{3}{*}{\includegraphics[width=1cm]{barlow.pdf}}
&k=0&-9388&-16261&N/A&-15677&-15681\\
&k=1&159663&147590&N/A&142288&142156\\
&k=2&2497116&2513942&N/A&2423634&2421933\\[1mm]
\end{tabular}
\caption{Estimates of the $k$-th fractal curvatures for $k=0,1,2$.
The first column contains the exact value of the corresponding fractal curvature, rounded to accuracy $1$, which was available for all sets except the Cross set.
Columns two and three contain the LRE and NRE estimates based on the knowledge of the true regression parameters $s$ (dimension) and $h_0$ (period). (Since non-arithmetic sets do not have a period, no values appear for those sets in column three.)
For NRE, the detail level parameter $m$ was chosen as 4; this seemed reasonable as increasing $m$ further changed the estimates only very slightly.
 Columns four and five contain simultaneous LRE and NRE estimates of all curvatures, where the dimension $s$ and the period $h_0$ were also estimated
  as explained in Section \ref{subsec:estimators}.}
\label{tab:curv}
\end{table}

Estimates of the fractal curvatures typically result in a relative error of around $10\%$ to $20\%$. An exception is the $0$-th curvature of the supergasket, which is rather dramatically overestimated. The problem are the pointed angles in this set, which lead to large discretization errors for the Euler characteristic.

We remind the reader that in both methods (NRE and LRE), fractal curvatures and fractal dimension are estimated \emph{simultaneously}. In order to
 test the stability of curvature estimates with respect to the dimension estimate, we have compared the outputs of NRE and LRE to their outputs
 conditional on a known fractal dimension $s$ (s.\ Table~\ref{tab:curv}).  Noticable differences were only found for the modified Sierpi\'nski carpet.
We interpret this as some evidence for the curvature estimates being reasonably stable with respect to errors in the dimensional estimate.

Moreover, we noticed that for the arithmetic fractals the periodicity was by far more evident in the Euler characteristic than in the boundary length
 or area, which explains why the differences between the two methods are most apparent for the $0$-th curvature estimate. This is consistent with the
 observation that the peaks in the periodograms of the time series of Euler characteristics are more pronounced than the peaks of the other time series, see Figure~\ref{fig:perio}, making the Euler characteristic a useful data set for the estimate of the period of arithmetic fractals.

Finally, we remark that non-arithmetic fractals yield virtually the same output for both NRE and LRE models. Hence NRE should be preferred over LRE whenever there is some doubt about whether a self-similar fractal is arithmetic or not.

In the examples, we have included three different sets of equal
dimension, namely the two carpets
(b) and (c) and the supergasket (f), cf.\ Figure~\ref{fig:thesets}. The structure of the sets (b) and (c) is rather similar, while the set (f) looks very different. The differences in the geometry are also visible in the fractal curvatures. While the fractal curvatures of (b) and (c) only differ slightly, those of the set (c) take completely different values. One can easily distinguish (f) from the other two using any of the estimated fractal curvatures. The sets (b) and (c) are best distinguished by the estimated
fractal Euler number, compare Table~\ref{tab:curv}.

\section{Summary and outlook}

We have introduced two methods for estimating the fractal dimension and the fractal curvatures of a given fractal set based on binary images. We have shown the consistency of our methods under suitable assumptions on the covariance structure of the errors and the choice of the radii. We have implemented and tested these methods on some toy examples of self-similar sets. While for the estimation of the fractal dimension our methods perform equally well as the standard methods, such as box counting, we obtain at the same time estimates of the fractal curvatures which we propose to use as additional geometric characteristics for image classification. The theoretical background provided by singular curvature theory is a strong argument for using these characteristics in favour of or in addition to other texture parameters suggested in the literature.

We point out that our consistency results only show that the suggested estimators estimate indeed the  fractal curvatures if the
resolution goes to infinity and the sequence of radii tends to zero in a suitable way. We make no claim about how well our estimators
perform if the resolution is kept finite, that is, in any scenario found in practice. Also, we did not address at all the question of how
well the implemented algorithms perform with respect to computational costs or running time. We have implemented our methods in the most
 obvious way, computing the intrinsic volumes for each dilation radius separately, for which each time a scan of the whole image is
 necessary. This allowed to use for this step existing algorithms in the \emph{GeoStoch} library \cite{geostoch}. Probably, a lot of optimization is possible in the step of determining the intrinsic volumes of the parallel sets. It may be possible to obtain the curvature data of all parallel sets in a single scan of the image.

Notice that so far the theoretical foundations (that is, the
existence of fractal curvatures) are laid for fractal sets
exhibiting some form of self-similarity, including
 strictly self-similar sets \cite{diss-winter,Zaehle,RZ12}, self-conformal sets \cite{Kombrink,Bohl13} and also some random self-similar fractals, as described in \cite{Zaehle}. For fractal sets exhibiting some weaker form of self-similarity, similar results are expected to hold and therefore the described methods may be used whenever some form of self-similarity is present.
However, one should be aware that for general (random) fractals
$F$ of dimension $s$, the (expected or almost sure) scaling
exponents $s_k(F)$ might not  necessarily be equal to $s-k$ or if
they are, that the fractal curvatures may not exists, not even the
averaged versions. For the Brownian path in $\R^d$,  $d\ge 2$, for
instance, the fractal dimension is $s=2$ (almost surely and in the
mean) and the scaling exponents are $s_k=s-k$ for the volume
($k=d$) and the surface area ($k=d-1$) for all dimensions $d>2$,
cf.\ \cite{RaSchmSpo09, ratwin1, HR12}. For $d=2$, however,
the corresponding fractal curvatures are zero because the correct
scaling is $-\log \varepsilon$ for the area $C_2(F_\varepsilon)$
(almost surely and in the mean) and
 $\varepsilon \log^2 \varepsilon$ for the perimeter $2 C_1(F_\varepsilon)$ as $\varepsilon \to
 0$ (at least in the mean).

Against this background, it is important to note that the
suggested algorithms may also be used as a test to check whether
the hypothesis $s_k=s-k$ (implied by (A3) and (A3')) is satisfied
for a given set and some $k$. For this purpose simply a separate
regression for the index $k$ (that is with $J=\{k\}$ in the sense
of Remark~\ref{rem:J}) can be carried out and the estimate of the
fractal dimension can be compared to the dimension estimate of the
simultaneous regression or to one of the sausage method
($J=\{d\}$). It is for instance not too difficult to check that
the parallel sets of the Koch curve have Euler characteristic $1$,
which means $C_0(F_\eps)=1$ for each $\eps>0$. Hence a separate
regression for $k=0$ applied to an image of the Koch curve $F$
should find an estimate for $s_k(F)$ very close to $0$. This is
indeed what we observed. Also the violation of assumption (A2) can
easily be checked from the data and the relevant indices can be
excluded from the estimation.

\paragraph{\bf Acknowledgements} The authors would like to thank
Martina Z\"ahle for stimulating discussions on fractals and
geometric measure theory. During the work on this article SW was supported by a DFG grant, project no.\  WI 3264/2-2.

\bibliographystyle{amsplain}  
\bibliography{references}

\end{document}